\newcolumntype{L}{>{\arraybackslash}X}
\theoremstyle{plain}
\newtheorem{theorem}{Theorem}[section]
\theoremstyle{remark}
\newtheorem{remark}[theorem]{Remark}
\theoremstyle{plain}
\newtheorem{corollary}[theorem]{Corollary}
\newtheorem{lemma}[theorem]{Lemma}
\newtheorem{proposition}[theorem]{Proposition}
\newtheorem{definition}[theorem]{Definition}
\numberwithin{equation}{section}
\def\N{{\mathbb N}}
\def\Z{{\mathbb Z}}
\def\Q{{\mathbb Q}}
\def\R{{\mathbb R}}
\def\K{{\mathcal K}\,}
\renewcommand{\P}{\mathbf{P}}
\newcommand{\E}{\mathbf{E}}
\newcommand{\F}{\mathscr{F}}
\newcommand{\p}{\mathbb{P}}
\newcommand{\q}{\mathbb{Q}}
\newcommand{\T}{\mathbb{T}}
\newcommand{\Tor}{\mathbb{T}}
\newcommand{\calL}{\mathscr{L}}
\newcommand{\ellip}{\mu}
\newcommand{\D}{\mathcal{D}}
\newcommand{\Borel}{\mathscr{B}}
\newcommand{\Progress}{\mathscr{P}}
\newcommand{\B}{\mathcal{B}}
\renewcommand{\emptyset}{\varnothing}
\newcommand{\g}{\gamma}
\newcommand{\om}{\omega}
\renewcommand{\O}{\Omega}
\newcommand{\A}{\mathscr{A}}
\newcommand{\wt}{\widetilde}
\newcommand{\supp}{\mathrm{supp}}
\newcommand{\one}{\mathbf{1}}
\newcommand{\dd}{\mathrm{d}}
\newcommand{\X}{\mathcal{X}}
\newcommand{\Y}{\mathcal{Y}}
\newcommand{\vor}{\zeta}
\newcommand{\vorc}{\zeta_{{\rm cut}}}
\newcommand{\uc}{u_{{\rm cut}}}
\newcommand{\vorcn}{\zeta_{{\rm cut},n}}
\newcommand{\ucn}{u_{{\rm cut},n}}
\newcommand{\vord}{\zeta_{{\rm det}}}
\newcommand{\vordd}{\zeta_{{\rm cut, d}}}
\newcommand{\vorcc}{\xi_{{\rm cut}}}
\newcommand{\Kin}{\mathcal{E}}
\newcommand{\embed}{\hookrightarrow}
\newcommand{\ud}{u_{\mathrm{det}}}
\newcommand{\udd}{u_{\mathrm{cut, d}}}
\newcommand{\pdd}{p_{\mathrm{cut, d}}}
\newcommand{\rhod}{\varrho_{\mathrm{det}}}
\newcommand{\rhodn}{\varrho_{\mathrm{det},n}}
\renewcommand{\i}{\mathrm{i}}
\newcommand{\Bs}{\mathbb{B}}
\newcommand{\Ls}{\mathbb{L}}
\newcommand{\Hs}{\mathbb{H}}
\newcommand{\Set}{\mathcal{S}_{\ell^2}^0}
\newcommand{\qxi}{Q_{\xi}}
\newcommand{\Stok}{\mathcal{S}}
\newcommand{\mf}{{\rm mz}}
\renewcommand{\L}{{\rm L}}
\newcommand{\Sm}{{\rm S}}
\begin{document}

\thanks{The author has received funding from the VICI subsidy VI.C.212.027 of the Netherlands Organisation for Scientific Research (NWO). The author is a member of GNAMPA (IN$\delta$AM)}

\author{Antonio Agresti}
\address{Delft Institute of Applied Mathematics\\
	Delft University of Technology \\ P.O. Box 5031\\ 2600 GA Delft\\The
	Netherlands.}
\curraddr{Department of Mathematics Guido Castelnuovo\\
	Sapienza University of Rome \\ P.le Aldo Moro 5\\ 00185 Rome\\ Italy.}
\email{antonio.agresti92@gmail.com}

\date\today

\title[On anomalous dissipation induced by transport noise]{On anomalous dissipation induced\\ by transport noise}

\keywords{Anomalous dissipation, stochastic maximal regularity, Meyers' estimate, Navier-Stokes equations, passive scalars, scaling limit, transport noise.}

\subjclass[2010]{Primary:  76M35, Secondary: 60H15, 35Q35}

\begin{abstract}
In this paper, we show that suitable transport noises produce anomalous dissipation of both enstrophy of solutions to 2D Navier-Stokes equations and of energy of solutions to diﬀusion equations in all dimensions.
The key ingredients are Meyers' type estimates for SPDEs with transport noise, which are combined with recent scaling limits for such SPDEs. The former enables us to establish, for the first time, uniform-in-time convergence in a space of positive smoothness for such scaling limits. Compared to previous work, one of the main novelties is that anomalous dissipation might take place even in the presence of a transport noise of arbitrarily small intensity. Physical interpretations of our results are also discussed.
\end{abstract}

\maketitle

\section{Introduction and statement of the main results}
The primary goal of this manuscript is to investigate the effect of transport noise on the \emph{anomalous dissipation of enstrophy} for the 2D Navier-Stokes equations (NSEs in the following) in vorticity formulation:
\begin{equation}
\label{eq:NS_2d_vorticity}
\left\{
\begin{aligned}
\partial_t \vor^{\nu} &+(u^\nu \cdot\nabla) \vor^{\nu} 
=\nu \Delta \vor^{\nu} + \sqrt{2\mu }\sum_{k\in \Z^2_0} \theta_k^{\nu}	\,
(\sigma_k \cdot\nabla) \vor^{\nu} \circ \dot{W}_t^k & \text{on }&\T^2,\\
\vor^{\nu}(0,\cdot)&=\vor_0& \text{on }&\T^2,
\end{aligned}
\right.
\end{equation}
where $u^\nu
=\K \vor^{\nu}\stackrel{{\rm def}}{=}(-\partial_y,\partial_x) \Delta^{-1}\vor^{\nu}
$, $\K$ denotes the Biot-Savart operator, $\T^2=(\R/\Z)^2$ the two-dimensional torus, $\nu>0 $ the kinematic viscosity of the fluid, $(W^k)_{k\in \Z^2_0}$ a family of  complex Brownian motions, $\circ$ the Stratonovich product, $\theta^{\nu}=(\theta_k^{\nu})_{k\in \Z^2_0}\in \ell^2$ with $\Z^2_0=\Z^2\setminus\{0\}$, $(\sigma_k)_{k\in \Z^2_0}$ a family of divergence-free vector field described in Subsection \ref{ss:structure_noise}, and $\mu>0$ the noise's intensity (cf.\ \eqref{eq:small_noise_transport} below).

Anomalous energy dissipation in fluid flows has been proven experimentally to a large degree \cite{SA97} and stands at the basis of turbulence theory \cite{Frisch95,K41_1,K41_2,K41_3}. For this reason, it is sometimes referred to as the zeroth law of turbulence. Roughly speaking, anomalous dissipation states that at high Reynolds numbers (i.e., $\nu\downarrow 0$) the averages of the energy dissipation are uniformly bounded from below in $\nu$. 
In the context of the Kraichnan-Batchelor theory (KB in the following) of 2D turbulence \cite{B69_2D_spectrum,R67_2D_turbulence} (see \cite[Section 9.7]{Frisch95} for additional references), the relevant anomalously dissipated quantity is the \emph{enstrophy}, and the anomalous dissipation of enstrophy reads as:
\begin{equation}
\label{eq:anomalous_dissipation_naive}
\liminf_{\nu\downarrow 0}\big\langle\,\nu\, |\nabla \vor^{\nu}|^2\big\rangle>0.
\end{equation}
In the above, the (unspecified) operator $\langle\cdot \rangle$ typically represents an ensemble average, e.g., space-time average or an expected value of it in the case of random environments. The physical mechanism behind the anomalous dissipation of enstrophy is the transfer, as $\nu\downarrow 0$, of the enstrophy from large to small spatial scales by the nonlinear convective term. This transference produces high gradients $|\nabla \vor^{\nu}|^2$ which cannot be compensated by the small multiplicative factor $\nu$ and therefore yield a non-trivial limit energy dissipation rate as $\nu\downarrow 0$. Let us mention that, sometimes in the literature, the $\liminf_{\nu\downarrow 0}$ in \eqref{eq:anomalous_dissipation_naive} is replaced by a $\limsup_{\nu\downarrow 0}$, resulting in a weaker notion that only guarantees anomalous dissipation of solutions to the 2D NSEs \eqref{eq:NS_2d_vorticity} along some subsequence of viscosities $\nu=\nu^k\to 0$.

\smallskip

In the deterministic setting (i.e., $\theta^\nu\equiv 0$), there are serious obstructions in making the above pictures rigorous. Indeed, as explained in \cite[Section 1]{LFMNL06}, in the case of $\vor_0\in L^2(\T^2)$, the anomalous dissipation of enstrophy \eqref{eq:anomalous_dissipation_naive} implies the existence of enstrophy-dissipative (suitable) weak solutions to the deterministic incompressible 2D Euler equations obtained via vanishing viscosity limits. However, the latter is known not to be true, see e.g., \cite{CS15_renormalized,MR4260789} or \cite[Remark 2]{CDE22_yudovich}. Moreover, the proposed ways to fix such a paradox in \cite{E01_dissipation} appeared not to be resolutive \cite[Section 7]{LFMNL06}.

\smallskip

The anomalous dissipation result for the enstrophy for the 2D NSEs \eqref{eq:NS_2d_vorticity} in Theorem \ref{t:anomalous_diss_NS} below appears to be the \emph{first} result in the direction of the KB theory in the physically relevant case of $L^2(\T^2)$ initial enstrophy, see also the comments below \eqref{eq:energy_balance_vor}.
However, here we are not aiming at capturing the sophisticated mechanics behind anomalous dissipation as described below \eqref{eq:anomalous_dissipation_naive}. 
Instead, we prove that transport noises, localised at high frequencies and varying with the viscosity, lead to anomalous dissipation.   
More precisely, anomalous dissipation is created by the `turbulent flows' $\sum_{k\in \Z^2_0}\theta^{\nu}_k\sigma_k\,\dot{W}^{k}$ rather than the convective nonlinearity as expected in the KB theory. 
Moreover, this is also true for noises of `small intensities' and for `nonlinear passive scalar' such as the 2D NSEs in vorticity formulation (here, we mean the turbulent flows differ from the advected quantity). 
Surprisingly, stochastic perturbations of 2D NSEs provide \emph{new} dissipation mechanics compared to the deterministic case.
The reader is also referred to Section \ref{s:2D_NSE_velocity}, where we also discuss the velocity formulation of the NSEs and the three-dimensional case. Further details about the physical interpretations of our results are given in Subsection
\ref{ss:heuristics_viscosity_dependent}.

\smallskip

The arguments in the current manuscript also extend the case of \emph{anomalous dissipation of energy} for passive scalars in all dimensions $d\geq 1$:
\begin{equation}
\label{eq:diffusive_scalars}
\left\{
\begin{aligned}
\partial_t \varrho^{\g}&=\g \Delta\varrho^{\g} 
+ \sqrt{c_d\mu }\sum_{k\in \Z^d_0}\sum_{1\leq \alpha\leq d-1} \theta_{k} ^{\g}\,(\sigma_{k,\alpha} \cdot\nabla) \varrho^{\g} \circ \dot{W}_t^{k,\alpha} & \text{on }&\T^d,\\
\varrho^\g(0,\cdot)&=\varrho_0& \text{on }&\T^d.
\end{aligned}
\right.
\end{equation} 
Here, $\T^d=(\R/\Z)^d$ is the $d$-dimensional torus, $\g>0$ the diffusivity of the scalar, $\theta^{\nu}=(\theta_k^{\g})_{k\in \Z^d_0}\in \ell^2$, $c_d=\frac{d}{d-1}$, while $(W^{k,\alpha})_{k,\alpha}$ and $(\sigma_{k,\alpha})_{k,\alpha}$ are families of complex Brownian motions and divergence-free vector fields described in Subsection \ref{ss:structure_noise}.

Anomalous dissipation for passive scalars advected by turbulent flows has attracted a lot of interest in recent years. For passive scalars, anomalous dissipation by turbulent flows is at the basis of the corresponding theory of scalar turbulence \cite{DSY05,SS00_scalar_turbulence,S96_spectrum_turbulence}. 
It can be defined by replacing in  \eqref{eq:anomalous_dissipation_naive}  the viscosity $\nu$ and the vorticity $\vor^{\nu}$ by the diffusivity $\g>0$ and the intensity of the passive scalar $\varrho^{\g}$ (solving an advection-diffusion such as \eqref{eq:diffusive_scalars}), respectively. 
In the deterministic setting, many results have been established in the context of anomalous dissipation. It is not possible to give here a complete overview of the deterministic results, and the reader is referred to, e.g., \cite{AV23_hom,BDL23_anomalous,BSW23,CCS23,CTV14,EL23_anomalous,hess2025universal} and the references therein. 

\smallskip

Before discussing works on anomalous dissipation and related topics in the context of stochastic fluid dynamics, let us first discuss the physical relevance of transport noise in stochastic fluid dynamics. 
Nowadays, there are several derivations of NSEs with transport noise available in the literature, see e.g., \cite{DP22_two_scale,FP20_small,FlaPa21,H15_SVP,M14_derivation,MR04}, and it is by now a well-established model in stochastic fluid dynamics \cite{BCF91,BCF92,BP00_strong,CP01,FlGa,FL32_book,HLN19,HLN21_annals}. To some extent, at the basis of such derivations is the idea of the separation of scales. A heuristic derivation using the latter principle is given in Subsection \ref{ss:heuristics_viscosity_dependent} below. This allows us to stress the physical relevance of the $\nu$-dependence of transport noise in \eqref{eq:NS_2d_vorticity} and provide an interpretation of our results for 2D NSEs. Starting from the works \cite{FL19,G20_convergence}, the effect of transport noise on mixing and enhanced dissipation of NSEs or advection-diffusion equations is by now well-understood, see e.g.,  \cite{FGL21_quantitative,FGL22_eddy,L23_enhanced}
(and also \cite{A23,BFL24_magnetic,FGL21,FHLN22,FD23,FLL24,L23_regularization,LTZ24} for related works).
The same is not true for anomalous dissipation. 
To the best of our knowledge, the only results on the anomalous dissipation for SPDEs with transport noise are given in \cite{HPZZ23,R23_kraichnan}. A comparison is postponed to Subsection \ref{ss:comparison}, where a more detailed discussion on our contribution is possible.
Finally, from a technical point of view, our proofs rely on a \emph{refinement} of the scaling limit arguments \cite{FL19,G20_convergence} in the parabolic setting, which is interesting on its own. A detailed discussion is given in Subsection \ref{sss:novelty} below.

\subsection{Anomalous dissipation results}
\label{ss:2D_NS_anomalous}
In this subsection, we state the main result of the manuscript. 
Below, for convenience, we employ the following notation 
\begin{equation}
\label{eq:symmetric_coefficients_set}
\Set\stackrel{{\rm def}}{=}
\Big\{\theta=(\theta_k)_{k\in \Z^d_0}\in \ell^2\,:\, 
\|\theta\|_{\ell^2}=1 \,\text{ and }\,  \#\{k\,:\, \theta_k \neq 0\}<\infty\Big\}
\end{equation}
for the set of normalized $\ell^2$-vectors with finitely non-zero components. Here, for simplicity, we did not display the dependence on the dimension $d\geq 1$. 

\begin{theorem}[Anomalous dissipation of enstrophy by transport noise -- 2D NSEs]
\label{t:anomalous_diss_NS}
Let $N\geq 1$ and $\delta>0$ be fixed. Then, for all $\mu>0$, there exists a family $(\theta^{\nu})_{\nu\in (0,1)}\subseteq \Set$ such that, for all mean-zero $\vor_0\in H^{\delta}(\T^2)$ satisfying $ \|\vor_0\|_{H^{\delta}(\T^2)}\leq N$, 
we have
$$
\liminf_{\nu\downarrow 0} \E\int_0^1 \int_{\T^2}  \nu \,|\nabla \vor^{\nu}|^2\,\dd x \,\dd t \geq \frac{1}{2} (1-e^{-\mu/(4\pi^2)})\, \|\vor_0\|_{L^2(\T^2)}^2,
$$
where $\vor^\nu$ is the unique global smooth solution to \eqref{eq:NS_2d_vorticity}.
\end{theorem}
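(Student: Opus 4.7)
The plan is to combine a pathwise $L^2$-energy identity for $\vor^\nu$ with a scaling limit of \eqref{eq:NS_2d_vorticity} as $\nu\downarrow 0$ for a well-chosen family $(\theta^\nu)_{\nu\in(0,1)}\subseteq\Set$. Since each $\sigma_k$ and the Biot--Savart velocity $u^\nu=\K\vor^\nu$ are divergence-free, both the Stratonovich transport noise and the convective nonlinearity act as antisymmetric operators on $L^2(\T^2)$ and therefore do not contribute to the evolution of $\|\vor^\nu\|_{L^2(\T^2)}^2$; applying It\^o's formula to the smooth solution of \eqref{eq:NS_2d_vorticity} yields
\[
\|\vor^\nu(1)\|_{L^2(\T^2)}^2+2\nu\int_0^1\|\nabla\vor^\nu\|_{L^2(\T^2)}^2\,\dd s=\|\vor_0\|_{L^2(\T^2)}^2\quad\text{a.s.},
\]
so the theorem reduces, after taking expectations, to the upper bound $\limsup_{\nu\downarrow 0}\E\|\vor^\nu(1)\|_{L^2(\T^2)}^2\leq e^{-\mu/(4\pi^2)}\|\vor_0\|_{L^2(\T^2)}^2$.

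To prove this upper bound I would choose $(\theta^\nu)$ concentrated on frequencies $|k|\geq R_\nu\uparrow\infty$ as $\nu\downarrow 0$ and spread over admissible wavevectors so that, in the spirit of the Flandoli--Galeati/Galeati scaling limits, the It\^o--Stratonovich corrector $\mu\sum_k|\theta_k^\nu|^2(\sigma_k\cdot\nabla)^2$ converges to an enhanced Laplacian $\kappa\Delta$ while the pure stochastic integral vanishes in the limit. The formal limit of \eqref{eq:NS_2d_vorticity} is then the deterministic 2D NSE in vorticity form with effective viscosity $\kappa=\kappa(\mu)>0$; denoting its (mean-zero) solution by $\bar\vor$, the deterministic energy identity combined with the Poincar\'e inequality (first non-trivial eigenvalue of $-\Delta$ on $(\R/\Z)^2$ equal to $4\pi^2$) gives $\|\bar\vor(1)\|_{L^2(\T^2)}^2\leq e^{-8\pi^2\kappa}\|\vor_0\|_{L^2(\T^2)}^2$, and the noise construction is calibrated so that $8\pi^2\kappa=\mu/(4\pi^2)$, yielding the target factor $e^{-\mu/(4\pi^2)}$.

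The hard part, and the main obstacle, is to upgrade this qualitative scaling limit to convergence of $\E\|\vor^\nu(1)\|_{L^2(\T^2)}^2$. Classical scaling-limit statements only provide weak-in-space convergence of $\vor^\nu$, which is insufficient both for passing to the limit in the $L^2$-norm at a fixed time and for closing the quadratic term $(u^\nu\cdot\nabla)\vor^\nu$ in the vorticity formulation. I would therefore invoke the Meyers-type estimates for SPDEs with transport noise announced in the introduction to secure, uniformly in $\nu\in(0,1)$ and in mean-zero $\vor_0$ with $\|\vor_0\|_{H^\delta(\T^2)}\leq N$, bounds for $\vor^\nu$ in $L^p(\Omega;C([0,1];H^{\delta'}(\T^2)))$ for some $p>2$ and $\delta'\in(0,\delta)$. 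This positive-smoothness, uniform-in-time control supplies the compactness needed to extract convergent subsequences, identify every accumulation point with $\bar\vor$ by uniqueness of the deterministic limit, and upgrade the scaling limit to strong convergence in $L^2(\Omega;C([0,1];L^2(\T^2)))$; in particular $\E\|\vor^\nu(1)-\bar\vor(1)\|_{L^2(\T^2)}^2\to 0$, which together with the previous two paragraphs closes the argument. The technical heart of the proof is thus this step: converting a $\nu$-uniform Meyers/stochastic-maximal-regularity gain in a space of positive smoothness into uniform-in-time strong convergence of $\vor^\nu$.
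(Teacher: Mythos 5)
Your overall strategy --- pathwise enstrophy balance, scaling limit to a deterministic vorticity equation with enhanced viscosity, and Meyers-type estimates to upgrade the convergence to a space of positive smoothness uniformly in time --- is the same as the paper's. However, there is a genuine gap in how you organize the limits. You ask the Meyers estimates to hold \emph{uniformly in $\nu\in(0,1)$}, so that you can pass to the limit $\nu\downarrow 0$ directly by compactness. This is not available: in the It\^o formulation of \eqref{eq:NS_2d_vorticity} the drift Laplacian has coefficient $\kappa=\nu+\mu$ while the parabolicity constant of the transport noise is exactly $\kappa_0=\mu$ (by \eqref{eq:ellipticity_noise} and $\|\theta^\nu\|_{\ell^2}=1$), so the gap $\kappa-\kappa_0=\nu$ degenerates as $\nu\downarrow 0$ and both $p_0$ and the constants in Theorem \ref{t:Meyers_parabolic} blow up. Consistently, the energy identity only gives $\E\int_0^1\|\nabla\vor^\nu\|_{L^2}^2\,\dd t\lesssim \nu^{-1}$, so no $\nu$-uniform positive-smoothness bound can be extracted from the dissipation either. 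The paper avoids this entirely: the scaling limit (Proposition \ref{prop:scaling_NS}) is performed at \emph{fixed} $\nu$, with uniformity only in the noise coefficient $\theta$ along a sequence with $\|\theta^n\|_{\ell^\infty}\to 0$; this yields, for each $\nu$ and each tolerance $\varepsilon$, some $\theta^\nu_\varepsilon\in\Set$ with $\E\|\vor^\nu(1)\|_{L^2}^2\le \|\vord^\nu(1)\|_{L^2}^2+O(\varepsilon)$, where $\vord^\nu$ solves the deterministic NSE with viscosity $\nu+\mu$, and one concludes by choosing $\varepsilon=\varepsilon_\nu\downarrow 0$ and taking $\liminf_{\nu\downarrow 0}$. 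Your single joint limit in $\nu$ must be replaced by this two-parameter (fixed-$\nu$, then diagonal) argument.

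Two further points. First, you cannot ``calibrate'' the effective viscosity: with $\theta^\nu\in\Set$ the It\^o--Stratonovich corrector in the vorticity formulation is exactly $\mu\Delta$, so the limiting equation has viscosity $\nu+\mu$ and the exponential rate is dictated by $\mu$ and the Poincar\'e constant alone, not by a tunable parameter in the noise. Second, to make the Meyers estimates and the compactness argument interact with the convective term you need the cut-off equation \eqref{eq:NS_2d_cutoff}, the subcritical estimate \eqref{eq:sub_criticality_L2} to close the nonlinearity below the $H^1$ level, and the a priori $H^\delta$ bound on the deterministic limit (Proposition \ref{prop:Hdelta_estimates}) together with a stopping-time argument to remove the cut-off afterwards; this machinery is entirely absent from your outline and is precisely where the quadratic term, which you acknowledge as an obstacle, actually gets handled.
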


Solutions to \eqref{eq:NS_2d_vorticity} are defined in Definition \ref{def:sol_NSE}. The existence of global-in-time and smooth-in-space solutions to \eqref{eq:NS_2d_vorticity} with $\theta^{\nu} \in \Set$ follows from the arguments presented in \cite[Theorems 2.7 and 2.12]{AV21_NS}. Under additional assumption on $\vor_0$, Theorem \ref{t:anomalous_diss_NS} also holds with $\inf$ in place of $\liminf$; see Proposition \ref{prop:anomalous_diss_NS2}.

As commented in \eqref{eq:energy_small_scales_two_sides} below, as $\theta\in \Set$, the `energy' of the transport noise $(\sqrt{2\mu} \,\theta_k \sigma_{k})_{k\in\Z^2_0}$ is related to the intensity parameter $\mu$: 
\begin{equation}
\label{eq:small_noise_transport}
\sqrt{2\mu}\sup_{\nu\in(0,1)}\|(\theta^{\nu}_k\sigma_k)_{k\in \Z^2_0}\|_{L^{\infty}(\T^2;\ell^2)}\leq \sqrt{2\mu}.
\end{equation}
Theorem \ref{t:anomalous_diss_NS} shows that, with an appropriate choice of the noise coefficient $\theta^\nu$, anomalous dissipation for the 2D NSEs \eqref{eq:NS_2d_vorticity} can occur even in the presence of \emph{small noise} (i.e., for any $\mu > 0$). This stands in contrast to the existing literature, which typically requires noise of sufficiently large intensity to observe anomalous dissipation (see Subsection \ref{ss:comparison}). The main novelty of this work lies in showing that anomalous dissipation can arise even when the transport noise has \emph{arbitrarily small intensity}.
Finally, we note that Theorem \ref{t:anomalous_diss_NS} also allows one to \emph{prescribe} the fraction $\eta \in (0,1)$ of the initial energy $\|\vor_0\|_{L^2}^2$ that is anomalously dissipated by the 2D NSEs \eqref{eq:NS_2d_vorticity}. Specifically, by choosing $\mu = -\ln(1 - \eta)$, Theorem \ref{t:anomalous_diss_NS} guarantees that the energy dissipation $\liminf_{\nu\downarrow 0} \E\int_0^1 \int_{\T^2}  \nu \,|\nabla \vor^{\nu}|^2\,\dd x \,\dd t$ is at least $\frac{\eta}{2}\|\vor_0\|_{L^2(\T^2)}^2$.

To connect Theorem \ref{t:anomalous_diss_NS} with the discussion around \eqref{eq:anomalous_dissipation_naive} on the KB theory, let us discuss its implications. Firstly, as $\nabla \cdot \sigma_{k,\alpha}=0$, solutions to \eqref{eq:NS_2d_vorticity} satisfy:
\begin{equation}
\label{eq:energy_balance_vor}
\frac{1}{2}\|\vor^\nu(t,\cdot)\|_{L^2(\T^2)}^2
+ \int_{0}^t \int_{\T^2}\nu\, |\nabla \vor^\nu|^2\,\dd x\,\dd s=
\frac{1}{2}\|\vor_0\|_{L^2(\T^2)}^2\  \text{ a.s.\ for all }t\geq 0.
\end{equation}
If $\theta^\nu$ is chosen as in Theorem \ref{t:anomalous_diss_NS}, then the 2D NSEs \eqref{eq:NS_2d_vorticity} exhibit \emph{anomalous dissipation of enstrophy} at time $t=1$, i.e., 
$
\liminf_{\nu\downarrow 0} \E\int_0^1 \int_{\T^2}  \nu \,|\nabla \vor^{\nu}|^2\,\dd x \,\dd t >0,
$
or equivalently,
$$
\limsup_{\nu\downarrow 0}\,\E\|\vor^\nu(1,\cdot)\|^2_{L^2(\T^2)}<\|\vor_0\|_{L^2(\T^2)}^2.
$$
Thus, with the aid of transport noise, we obtain anomalous dissipation of enstrophy while the energy balance \eqref{eq:energy_balance_vor} remains true at fixed $\nu>0$ and with $\vor_0\in L^2(\T^2)$. As mentioned above, this cannot be obtained in the absence of noise, see \cite{LFMNL06}. 

\smallskip

An inspection of the proof of Theorem \ref{t:anomalous_diss_NS} shows that the time $t=1$ can be replaced by any time $t>0$; however, the corresponding choice of $(\theta^{\nu})_{\nu \in (0,1)}$ depends on such time $t$.  
In addition, we can take $\theta^{\nu}$ to be constants for $\nu\in [\nu_{j+1},\nu_{j})$ where $(\nu_j)_{j\geq 1}\subseteq (0,1]$ is a sequence satisfying $\nu_1=1$, $\nu_{j+1}\leq \nu_j$  and $\lim_{j\to \infty} \nu_j=0$. Finally, the proof of Theorem \ref{t:anomalous_diss_NS} shows that 
$\theta^{\nu}$'s are localized at high frequencies:
\begin{equation}
\label{eq:supp_theta_high_frequencies}
\supp\,\theta^{\nu}\subseteq \big\{k\in \Z_0^2\,:\,N^{\nu}\leq |k|\leq 2N^{\nu}\big\}\quad \text{ and }\quad 
\liminf_{\nu\downarrow 0}N^{\nu}= \infty.
\end{equation}
For passive scalars, a version of Theorem \ref{t:anomalous_diss_NS} holds in all dimensions.

\begin{theorem}[Anomalous dissipation of energy by transport noise -- Passive scalars]
\label{t:anomalous_diss_diff}
Let $d\in \N_{\geq 1}$, $N\geq 1$ and $\delta>0$ be fixed. Then,
for all $\mu>0$, there exists a family $(\theta^{\g})_{\g\in (0,1)}\subseteq \Set$ such that, for all mean-zero $\varrho_0\in H^{\delta}(\T^d)$ satisfying $ \|\varrho_0\|_{H^{\delta}}\leq N$, 
we have
$$
\liminf_{\g\downarrow 0} \E\int_0^1 \int_{\T^d}  \g \,|\nabla \varrho^{\g}|^2\,\dd x \,\dd t \geq \frac{1}{2} (1-e^{-\mu/(4\pi^2)})\,\|\varrho_0\|_{L^2(\T^d)}^2,
$$
where $\varrho^\g$ is the unique global smooth solution to \eqref{eq:diffusive_scalars}.
\end{theorem}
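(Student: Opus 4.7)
The natural route is to combine the pathwise energy balance for \eqref{eq:diffusive_scalars} with a scaling-limit argument in the spirit of \cite{FL19,G20_convergence} and the exponential decay of the heat semigroup on $\T^d$. Since each $\sigma_{k,\alpha}$ is divergence free, Itô's formula applied to $\frac{1}{2}\|\varrho^\g(t,\cdot)\|_{L^2(\T^d)}^2$ gives the pathwise identity
$$
\frac{1}{2}\|\varrho^\g(t,\cdot)\|_{L^2(\T^d)}^2+\int_0^t\int_{\T^d}\g\,|\nabla\varrho^\g|^2\,\dd x\,\dd s=\frac{1}{2}\|\varrho_0\|_{L^2(\T^d)}^2\qquad\text{a.s.,}
$$
exactly as in \eqref{eq:energy_balance_vor}. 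Taking expectations at $t=1$, the theorem reduces to exhibiting a family $(\theta^\g)_{\g\in(0,1)}\subseteq\Set$ for which
$$
\limsup_{\g\downarrow 0}\E\|\varrho^\g(1,\cdot)\|_{L^2(\T^d)}^2\leq e^{-\mu/(4\pi^2)}\|\varrho_0\|_{L^2(\T^d)}^2.
$$

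To produce such $\theta^\g$ I would localize the noise at high frequencies, choosing $\theta^\g$ supported in annular shells $\{k\in\Z^d_0:N^\g\leq|k|\leq 2N^\g\}$ with $N^\g\uparrow\infty$ as in \eqref{eq:supp_theta_high_frequencies}, and with the active modes arranged symmetrically. The normalisation $c_d=d/(d-1)$ in \eqref{eq:diffusive_scalars} together with $\|\theta^\g\|_{\ell^2}=1$ are designed precisely so that the Itô--Stratonovich correction of the transport term collapses to a fixed constant multiple of $\mu\Delta\varrho^\g$, via the identity $\sum_{\alpha=1}^{d-1}\sigma_{k,\alpha}^{\,i}\sigma_{k,\alpha}^{\,j}=\delta_{ij}-k_ik_j/|k|^2$ summed symmetrically over the active $k$'s. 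The Itô form of \eqref{eq:diffusive_scalars} thus becomes a perturbed heat equation with enhanced diffusivity proportional to $\mu$, plus a martingale whose coefficients oscillate at frequencies $\gtrsim N^\g\to\infty$.

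The scaling-limit results of \cite{FL19,G20_convergence}, suitably refined, then give convergence of $\varrho^\g$ to the unique solution $\bar\varrho$ of the limit deterministic heat equation with that enhanced diffusivity and initial datum $\varrho_0$. Because $\varrho_0$ is mean zero, the spectral gap of $-\Delta$ on $L^2(\T^d)$ then yields an exponential decay bound of the form $\|\bar\varrho(1,\cdot)\|_{L^2(\T^d)}^2\leq e^{-\mu/(4\pi^2)}\|\varrho_0\|_{L^2(\T^d)}^2$, the precise constant $1/(4\pi^2)$ being a convenient and not necessarily optimal choice that the author presumably finds it useful to state. Combining this deterministic decay with the scaling-limit convergence then yields the required $\limsup$ bound.

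The technical heart of the argument — and the novelty advertised in the abstract — is to upgrade the convergence $\varrho^\g\to\bar\varrho$ from the weak/time-averaged form given in \cite{FL19,G20_convergence} to something strong enough to pass to the limit in the \emph{pointwise-in-time} quantity $\E\|\varrho^\g(1,\cdot)\|_{L^2(\T^d)}^2$. For this I would invoke the Meyers'-type estimate for SPDEs with transport noise developed in the body of the paper: it supplies a $\g$-uniform a priori bound for $\varrho^\g$ in a space of small positive smoothness $H^{\eps}(\T^d)$ together with a compatible time-continuity estimate, so that interpolating this uniform regularity against the known weak convergence promotes the latter to uniform-in-time convergence in $L^2(\T^d)$. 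The assumption $\varrho_0\in H^{\delta}(\T^d)$ with $\|\varrho_0\|_{H^{\delta}(\T^d)}\leq N$ enters exactly here, to provide a $\g$-uniform initial bound compatible with the Meyers estimate. The overall structure mirrors the proof of Theorem~\ref{t:anomalous_diss_NS}, but is considerably simpler thanks to the linearity of \eqref{eq:diffusive_scalars}.
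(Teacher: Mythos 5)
Your proposal follows essentially the same route as the paper: the pathwise energy balance, high-frequency $\theta^\g$ as in \eqref{eq:choice_thetan}, the Meyers-upgraded scaling limit to the deterministic heat equation with diffusivity $\g+\mu$ in $C([0,1];L^2(\T^d))$, and the Poincar\'e decay of the mean-zero limit. The two points you leave implicit --- converting convergence in probability into the bound on $\E\|\varrho^\g(1)\|_{L^2}^2$ (the paper does this with a stopping time and the a.s.\ bound $\|\varrho^\g(1)\|_{L^2}\le\|\varrho_0\|_{L^2}$), and making the choice of $\theta^\g$ uniform over the ball $\|\varrho_0\|_{H^\delta}\le N$ rather than per initial datum (Corollary \ref{cor:scaling_limit_diffusive}) --- are routine given your setup.
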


Global solutions to \eqref{eq:diffusive_scalars} are defined in Definition \ref{def:sol_passive}, and their existence as well as smoothness for $\theta^{\g}\in \Set$ is well-known, e.g., \cite[Theorem 4.2]{AV23_reactionI}. Under an additional assumption on $\varrho_0$, the $\liminf_{\g\downarrow 0}$ can replaced by $\inf_{\g\in (0,1)}$ in Theorem \ref{t:anomalous_diss_diff}, see Proposition \ref{prop:anomalous_diss_diff2}.

The comments below Theorem \ref{t:anomalous_diss_NS} extend to the above result. In particular, arguing in the comments below \eqref{eq:small_noise_transport}, Theorem \ref{t:anomalous_diss_diff} yields anomalous dissipation for passive scalars even in the presence of a \emph{small} transport noise. 

\begin{remark}
\label{r:anomalous_dissipation_and_convergence}
Here we collect some additional comments on the anomalous dissipation of passive scalars as in Theorem \ref{t:anomalous_diss_diff}. The same consideration extends verbatim to the 2D NSEs case analyzed in Theorem \ref{t:anomalous_diss_NS}. 
\begin{itemize}
\item {\rm (Limiting behavior of $(\varrho^\g)_{\g\in (0,1)}$)}.\ 
From the proof of Theorem \ref{t:anomalous_diss_diff}, it follows that the unique global solution $\varrho^\g$ to \eqref{eq:diffusive_scalars}  with $\theta^\g$ as in the latter satisfies
\begin{equation}
\label{eq:varrhog_convergence_to_varrhodet}
\varrho^\g \to \rhod^{0} \  \text{as $\g\downarrow 0$ in probability in }C([0,1];L^2(\T^d)),
\end{equation}
where $\rhod^0$ is the unique global solution to 
\begin{equation}
\label{eq:rhod_PDE_introduction_rev2}
\partial_t \rhod^{0} 
= \mu\Delta \rhod^{0}  \ \text{ on }\T^d,\qquad 
\rhod^{0}(0,\cdot)=\varrho_0\ \text{ on }\T^d.
\end{equation}
This fact is crucial in our proof of Theorem \ref{t:anomalous_diss_diff}, see Subsection \ref{sss:strategy} for more details. Furthermore, the appearance of the additional (eddy) dissipation $\mu\Delta \rhod^{0}$ in \eqref{eq:rhod_PDE_introduction_rev2} underlies the anomalous dissipation of energy for passive scalars. 
It is worth noticing that \eqref{eq:varrhog_convergence_to_varrhodet} implies that the extra dissipation in \eqref{eq:rhod_PDE_introduction_rev2} is produced by the gradients of $\varrho^\g$. More precisely, \eqref{eq:varrhog_convergence_to_varrhodet} and the energy balances for $\varrho^\g$ and $\rhod^0$ yield
\begin{align*}
2\int_0^\cdot\int_{\T^d} \g\, |\nabla \varrho^\g|^2\,\dd x \,\dd s
 =\|\varrho_0\|_{L^2(\T^d)}^2-\|\varrho^\g(\cdot)\|_{L^2(\T^d)}^2&\\
\stackrel{\g \downarrow 0}{\to}
\|\varrho_0\|_{L^2(\T^d)}^2 -\|\rhod^0(\cdot)\|_{L^2(\T^d)}^2& = 
2\int_0^\cdot\int_{\T^d} \mu\, |\nabla \rhod^0|^2\,\dd x \,\dd s
\end{align*} 
in probability in $C([0,1])$. The reader is referred to Remark \ref{r:lack_convergence_gradient} for an analogous result in the diffusive case $\g>0$.

Let us emphasize that the convergence in \eqref{eq:varrhog_convergence_to_varrhodet} is non-trivial due to the vanishing diffusive limit $\g\downarrow 0$. While compactness and the energy balance yield $\varrho^\g \to \rhod^0$ in $C([0,1]; H^{-\delta}(\T^d))$ for all $\delta > 0$, establishing the stronger convergence in \eqref{eq:varrhog_convergence_to_varrhodet} requires a more delicate argument. In particular, \eqref{eq:varrhog_convergence_to_varrhodet} relies on a careful choice of the coefficients $\theta^\g$, which is made possible by a scaling limit argument and the application of Meyers' estimates (proved in Appendix \ref{app:Meyers}). Further details are provided in Subsection \ref{sss:strategy}.

\item\label{it:anomalous_dissipation_and_convergence2} {\rm (Non necessity of $L^2$-convergence for anomalous dissipation)}.\
Although we will prove Theorem \ref{t:anomalous_diss_diff} using the convergence stated in \eqref{eq:varrhog_convergence_to_varrhodet}, we emphasize that this convergence is not required for anomalous dissipation to occur. In fact, it suffices that $\varrho^\g \approx \rhod^\g$ with high probability for all $\g > 0$. This observation leads to a slightly different version of the anomalous dissipation result found in Theorems \ref{t:anomalous_diss_NS} and \ref{t:anomalous_diss_diff}, which we will present for passive scalars in Proposition \ref{prop:anomalous_diss_diff2}, and for the 2D NSEs in Proposition \ref{prop:anomalous_diss_NS2}.
\end{itemize}
\end{remark}

Next, we provide a heuristic motivation for the $\nu$-dependence of the transport noise in the context of NSEs with transport noise \eqref{eq:NS_2d_vorticity}. We also offer a possible physical interpretation of our results.
The physical motivations for the $\gamma$-dependence of the transport noise in advection-diffusion equations \eqref{eq:diffusive_scalars} are not discussed here; we refer the reader to \cite[Section 2]{DE17_dissipation} and \cite[Section 2]{R23_kraichnan} for further discussion.

\subsection{Heuristics for $\nu$-dependent transport noise}
\label{ss:heuristics_viscosity_dependent}
Here, to motivate transport noise, we follow the heuristic motivations given in \cite[Section 1.2]{FL19} based on two scale arguments.
Rigorous justifications of the transport noise in fluid dynamics models are given in, e.g., \cite{DP22_two_scale,FlaPa21,H15_SVP,MR01,MR04}.

As in \cite[Subsection 1.2]{FL19}, let us assume that the vorticity field $\vor^{\nu}$ of a turbulent fluid decomposes into large and small scales, i.e.,  $\vor^{\nu}=\vor^{\nu}_\L+\vor_{\Sm}^{\nu}$ where $\vor_{\Sm}^{\nu}$ and $\vor_\L^{\nu}$ are the `small' and `large' scales, respectively; and 
\begin{equation}
\label{eq:large_small_scale_decomposition}
\left\{
\begin{aligned}
\partial_t \vor^{\nu}_{\L} +([u_{\Sm}^{\nu}+u_\L^{\nu}]\cdot\nabla)\vor_\L^{\nu}
&= \nu \Delta \vor^{\nu}_\L& \text{on }&\T^2,\\
\partial_t \vor_{\Sm}^{\nu} +([u_{\Sm}^{\nu}+u_\L^{\nu}]\cdot\nabla)\vor_{\Sm}^{\nu}
&=\nu \Delta \vor_{\Sm}^{\nu} & \text{on }&\T^2.
\end{aligned}
\right.
\end{equation}
Here, $u_\L^{\nu}=\K \vor_\L^\nu$ and $u_{\Sm}^{\nu}=\K \vor_{\Sm}^\nu$ are the corresponding small and large scales velocity, respectively.
Note that $\vor^{\nu}$ solves the 2D (deterministic) NSEs. 
For completeness, below we also include comments on the 3D NSEs, where in \eqref{eq:large_small_scale_decomposition} the transport term $([u^{\nu}_{\Sm}+ u^{\nu}_{\L}] \cdot\nabla)\vor_*^\nu$
 is replaced by $([u^{\nu}_{\Sm}+u^{\nu}_{\L}] \cdot\nabla)\vor_*^\nu-(\vor^{\nu}_* \cdot\nabla)[u^{\nu}_{\Sm}+u^{\nu}_{\L}] $ with $*\in\{\L,\Sm\}$, cf.\ again \cite[Subsection 1.2]{FL19}.
Heuristically, one could think that, in a turbulent regime, $u^{\nu}_{\Sm}$ varies in time very rapidly compared to $u^{\nu}_\L$. Therefore, $u^{\nu}_{\Sm}$ can be approximated, in time, by a white noise:
\begin{equation}
\label{eq:small_noise}
u^{\nu}_{\Sm}(t,x)\approx \sum_{k\in \Z^d_0}\sum_{1\leq \alpha\leq d-1}\wt{\theta}_k^{\nu}\,e^{2\pi \i k\cdot x} a_{k,\alpha} \dot{W}_t^{k,\alpha},
\end{equation}
where $(a_{k,\alpha})_{\alpha\in \{0,\dots,d-1\}}\subseteq \R^d$ is an orthonormal basis of $k^{\perp}=\{k'\in \R^d\,:\, k'\cdot k=0\}$ (this fact ensures $\nabla\cdot\sigma_{k,\alpha}=0$). 
Note that using \eqref{eq:small_noise} in the first of \eqref{eq:large_small_scale_decomposition}, one obtains \eqref{eq:NS_2d_vorticity}. The choice of the Stratonovich formulation in \eqref{eq:NS_2d_vorticity} is due to Wong-Zakai-type results, which, roughly, ensure that \eqref{eq:NS_2d_vorticity} can be obtained as a limit of regular approximations of $\dot{W}^{k,\alpha}_t$.
At this point, there are no physical motivations for the independence of the coefficients $\wt{\theta}^{\nu}_k$ on $\nu>0$ in \eqref{eq:small_noise}. 
In contrast, one expects that small scales are more active at the \emph{Kolmogorov length scale} $\sim \nu^{-\ell_d}$ where $\ell_2=\frac{1}{2}$ and $\ell_3=\frac{3}{4}$, c.f., \cite[pp.\ 350]{Vallis06}. As $u_{\Sm}^{\nu}$ in \eqref{eq:small_noise} is a model for small scales, then one expects that $k\mapsto \theta_k^{\nu}$ `accumulates' at (high) frequencies $\sim \nu^{-\ell_d}$.

Due to the previous facts and the support condition in our results \eqref{eq:supp_theta_high_frequencies}, we conjecture that the anomalous dissipation in Theorem \ref{t:anomalous_diss_NS}, which is expected in the KB theory of 2D turbulence, holds if and only if $\supp\,\theta^{\nu}\subseteq \{k\in \Z_0^2\,:\, |k|\eqsim \nu^{-\delta}\}$ for some $\delta\geq \frac{1}{2}$ independent of $\nu$. In the case of passive scalars, in agreement with the fact that anomalous dissipation can be obtained by a (deterministic) H\"older continuous flow \cite{AV23_hom,CCS23}, we instead expect that $ \supp\,\theta^\g \subseteq \{k \in \Z_0^d\,:\, |k|\eqsim \g^{-\delta}\}$ for $\delta>0$ is sufficient for obtaining anomalous dissipation for solutions to \eqref{eq:diffusive_scalars}. 
The proof of the above conjectures goes beyond the scope of this manuscript. 

\smallskip

Next, we explore a consequence of the energy conservation for the small scales in combination with \eqref{eq:small_noise}. It is reasonable to postulate that, although the small scales are more excited as $\nu\downarrow 0$, the corresponding `energy' stays \emph{uniformly} bounded in $\nu$. On the one hand, due to the white-in-time ansatz \eqref{eq:small_noise}, the energy cannot be defined directly. On the other hand, the kinetic energy $
\Kin_{\Sm}^{\nu,\phi}$ of the small scales $u^{\nu}_{\Sm}$ at an observable $\phi \in L^2_{{\rm loc}}([0,\infty))$ can be defined as
$$
\Kin_{\Sm}^{\nu,\phi}(t,x)\stackrel{{\rm def}}{=}\Big|\sum_{k\in \Z^d_0} \wt{\theta}_k^{\nu}\, e^{2\pi \i k\cdot x}
\sum_{1\leq \alpha\leq d-1}  a_{k,\alpha}  \int_{0}^t\phi (s)\,\dd W^{k,\alpha}_s\Big|^2 $$ 
for $(t,x)\in\R_+\times \T^d$, and satisfies,
\begin{equation}
\label{eq:energy_small_scales_two_sides}
\E\int_{\T^d}
\Kin_{\Sm}^{\nu,\phi}(t,x)\,\dd x \eqsim_d 
\|\wt{\theta}^{\nu}\|_{\ell^2}^2 \|\phi\|_{L^2(0,t)}^2\ \text{ for }t>0.
\end{equation}
The requirement that the energy of the small scales is bounded, as $\nu\downarrow 0$, therefore implies  
$$
\sup_{\nu\in (0,1)}\|\wt{\theta}^{\nu}\|_{\ell^2}<\infty.
$$ 
The above condition is indeed satisfied in Theorem \ref{t:anomalous_diss_NS} with $d=2$ and $\wt{\theta}^{\nu}_k=\sqrt{2\mu }\,\theta_k^{\nu}$.

\smallskip

To conclude, let us stress that some criticisms can be made about the decomposition \eqref{eq:large_small_scale_decomposition} and the corresponding ansatz \eqref{eq:small_noise}. Indeed, as commented in \cite[Section 1.2]{FL19}, the above scale separation has never been established so strictly in real fluids. However, simplified models such as \eqref{eq:large_small_scale_decomposition}-\eqref{eq:small_noise} can be used to understand basic features of certain phenomena. The reader is referred to \cite{MK99_simplified} for discussions.

\subsection{Strategy, novelty, physical interpretation and comparison}
\label{claim_strategy}
We begin by discussing the strategy used in the proofs of our main results. 

\subsubsection{Strategy}
\label{sss:strategy}
Here, we describe the strategy used to prove Theorem \ref{t:anomalous_diss_diff}. The proof of Theorem \ref{t:anomalous_diss_NS} is slightly more involved due to the presence of the convective nonlinearity. The main step in the proof of Theorem \ref{t:anomalous_diss_diff} can be roughly summarized as follows (cf.\ Corollary \ref{cor:scaling_limit_diffusive}):

\smallskip

{\sc Main Step.}
For each fixed $\mu>0$ and $\g>0$, there exists $\theta^{\g}\in \Set$ such that the unique global solution $\varrho^{\g}$ to \eqref{eq:diffusive_scalars} satisfies
\begin{equation}
\label{eq:main_step_strategy}
\E\|\varrho^{\g}(1)\|_{L^2}^2 \approx 
\|\rhod^{\g}(1)\|_{L^2}^2
\end{equation}
where $\rhod^{\g}$ solves 
\begin{equation}
\label{eq:strategy_varrhod}
\left\{
\begin{aligned}
\partial_t \rhod^{\g} 
&= (\g+\mu)\Delta \rhod^{\g}& \text{ on }&\T^d,\\
\rhod^{\g}(0,\cdot)&=\varrho_0& \text{ on }&\T^d.
\end{aligned}
\right.
\end{equation}
Here,  $\approx$ means that the difference between the two quantities in \eqref{eq:main_step_strategy} can be made as small as needed by choosing $\theta^{\g}$ appropriately. Below, we use that $\varrho_0$ has mean zero by assumption, see Theorem \ref{t:anomalous_diss_diff}.

The advantage of \eqref{eq:strategy_varrhod} compared to \eqref{eq:diffusive_scalars} is that the operator $\mu\Delta$ provides an additional dissipation/diffusion (sometimes referred to as \emph{eddy viscosity}). It is important to note that the same is \emph{not} true for the transport noise in \eqref{eq:diffusive_scalars}. Indeed, due to the incompressibility of the noise coefficients $\nabla \cdot \sigma_{k,\alpha}=0$ required below, the following energy balance holds:
\begin{equation}
\label{eq:energy_balance_diffusion_intro}
\frac{1}{2}\|\varrho^{\g}(t)\|_{L^2}^2 +\int_0^t \int_{\T^d}\g\, |\nabla\varrho^{\g} |^2\,\dd x\,\dd s
=\frac{1}{2}\|\varrho_0\|_{L^2}^2 \ \text{ a.s.\ for all }t>0,
\end{equation}
where $\varrho^\g$ solves \eqref{eq:diffusive_scalars}.
Therefore, the transport noise in \eqref{eq:diffusive_scalars} with intensity $\mu>0$ does not produce any additional dissipation. 

\smallskip
 
If \eqref{eq:main_step_strategy} holds, then the proof of Theorem \ref{t:anomalous_diss_diff} readily follows from the additional viscosity/diffusivity in \eqref{eq:strategy_varrhod}:
\begin{align*}
2\, \E\int_0^1 \int_{\T^d}  \g \,|\nabla \varrho^{\g}|^2\,\dd x \,\dd t 
& \stackrel{\eqref{eq:energy_balance_diffusion_intro}}{=}\|\varrho_0\|_{L^2(\T^d)}^2 - 
\E\|\varrho^{\g}(1)\|_{L^2(\T^d)}^2\\
&\stackrel{\eqref{eq:main_step_strategy}}{\approx}\|\varrho_0\|_{L^2(\T^d)}^2 - 
\|\rhod^{\g}(1)\|_{L^2(\T^d)}^2\\ 
&\ \ \stackrel{(i)}{\geq}(1-e^{-\mu/(4\pi^2)})\|\varrho_0\|_{L^2(\T^d)}^2 
\end{align*}
where in $(i)$ we used that from the energy inequality for \eqref{eq:strategy_varrhod} it holds that $\|\rhod^{\g}(t)\|_{L^2(\T^d)}^2\leq e^{-[(\mu+\g) t]/(4\pi^2)}\|\varrho_0\|_{L^2(\T^d)}^2\leq e^{-(\mu t)/(4\pi^2)}\|\varrho_0\|_{L^2(\T^d)}^2$, due to the increased viscosity/diffusivity in \eqref{eq:strategy_varrhod}. Here, we used $\int_{\T^d}\rhod^\g(t,\cdot)\,\dd x =0$ for all $t\geq 0$ as $\int_{\T^d}\varrho_0\,\dd x =0$, and that the Poincar\'e constant is $1/(2\pi)$ (the latter follows from the Plancherel inequality and $\T=\R/\Z$).
Hence, Theorem \ref{t:anomalous_diss_diff} follows by taking the $\liminf_{\g\downarrow 0}$ in the above.

\subsubsection{Novelty -- Improved scaling limits via Meyers' estimates}
\label{sss:novelty}
The key ingredient in the proof of \eqref{eq:main_step_strategy} is an improvement of a well-established scaling limit argument due to \cite{FGL21,FL19}.
Our improvement of such a scaling limit argument reads roughly as follows (cf.\ Proposition \ref{prop:scaling_diff}): 

For all $\mu,\g>0$ and sequences $(\theta^n)_{n\geq 1}\subseteq \Set$ (see \eqref{eq:symmetric_coefficients_set}) satisfying
\begin{equation}
\label{eq:sequence_goes_to_zero_linfty}
\displaystyle{\lim_{n\to \infty} \|\theta^n\|_{\ell^{\infty}}=0}
\end{equation}
it holds that
\begin{equation}
\label{eq:convergence_varrho_strong}
\lim_{n\to \infty} \varrho^n =\rhod^{\g} \text{ in probability in }C([0,1];L^2(\T^d)),
\end{equation}
where $\varrho^n$ is the unique global solution to \eqref{eq:diffusive_scalars} with $\theta^{\g}=\theta^n$.

\smallskip

A sequence in $\Set$ satisfying \eqref{eq:sequence_goes_to_zero_linfty} is given in  \cite[eq.\ (1.9)]{L23_enhanced} (see also \eqref{eq:choice_thetan}).

The main novelty and improvement compared to the above-mentioned works of \eqref{eq:convergence_varrho_strong} is that the limit is established in a space of \emph{zero smoothness} uniformly in time. More precisely, in previous works, limits as in \eqref{eq:convergence_varrho_strong} were only established with $L^2(\T^d)$ replaced by $H^{-\varepsilon}(\T^d)$ for some $\varepsilon>0$ (cf.\ \cite[Proposition 3.7]{FGL21} and \cite[Theorem 1.4]{FL19}). 
The validity of \eqref{eq:convergence_varrho_strong} requires \eqref{eq:diffusive_scalars} to be parabolic, i.e., $\gamma > 0$, and discriminates between the parabolic ($\gamma > 0$) and hyperbolic ($\gamma = 0$) regimes; see \cite{butori2024background,MR4238216} for some results in the hyperbolic setting. It is worth noticing that \eqref{eq:convergence_varrho_strong} implies the \emph{non-convergence} of $\nabla \varrho^n$ to $\nabla \rhod^\g$, see Remark \ref{r:lack_convergence_gradient}. 

The improvement in \eqref{eq:convergence_varrho_strong} is of central importance to establish \eqref{eq:main_step_strategy}, and thus for our approach to anomalous dissipation. Indeed, \eqref{eq:convergence_varrho_strong} and energy estimates readily imply \eqref{eq:main_step_strategy} for a sufficiently large $n\geq 1$; while this is not true if one uses $H^{-\varepsilon}(\T^d)$ instead of $L^2(\T^d)$ in \eqref{eq:convergence_varrho_strong}. 
Interestingly, we can also prove that \eqref{eq:convergence_varrho_strong} holds with $L^2(\T^d)$ replaced by $H^{r}(\T^d)$ for some $r(\g,\mu)>0$. The latter fact is needed to deal with, e.g., 2D NSEs \eqref{eq:NS_2d_vorticity} (see the proof of Corollary \ref{cor:strong_convergence_theta}) and in the velocity formulation in Section \ref{s:2D_NSE_velocity}. 

\smallskip

The key tool behind \eqref{eq:convergence_varrho_strong} is the \emph{stochastic Meyers' estimates} proven in Section \ref{app:Meyers}. In particular, they imply the existence of $r_0(\g,\mu)>0$ for which 
\begin{equation}
\label{eq:uniform_estimate_varrhon_strategy}
\sup_{n\geq1 }\,\E\sup_{t\in [0,1]}\|\varrho^{n}(t)\|_{H^{r_0}(\T^d)}^2<\infty
\end{equation} 
where $\varrho^{n}$ is as in \eqref{eq:convergence_varrho_strong}, i.e., the unique global solution to \eqref{eq:diffusive_scalars} with $\theta^{\g}=\theta^n$. After \eqref{eq:uniform_estimate_varrhon_strategy} is proved, then the claim \eqref{eq:convergence_varrho_strong} follows from a compactness argument. 

The main difficulty behind the proof of \eqref{eq:uniform_estimate_varrhon_strategy} is the \emph{lack} of uniformity of regularity of the noise coefficients $(\theta^n_k \sigma_{k,\alpha})_{k,\alpha}$ in \eqref{eq:diffusive_scalars} along the sequence $(\theta^n)_{n\geq 1}$ satisfying \eqref{eq:sequence_goes_to_zero_linfty}. 
More precisely, one has, for all $r>0$,
\begin{equation}
\label{eq:smoothness_coefficients}
\sup_{n\geq 1} \|(\theta^n_k \sigma_{k,\alpha})_{k,\alpha}\|_{L^{\infty}(\T^d;\ell^2)}<\infty\ \  \text{ while }\ \ 
\sup_{n\geq 1} \|(\theta^n_k \sigma_{k,\alpha})_{k,\alpha}\|_{H^{r}(\T^d;\ell^2)}=\infty;
\end{equation}
see \cite[Proposition 4.1]{Arole_25} for details. 
Let us stress that the supremum over $n\geq 1$ in the above is essential as it can hold that $\#\{k\,:\, \theta_k^n \neq 0\}<\infty$ and therefore $(\theta^n_k \sigma_{k,\alpha})_{k,\alpha}\in C^{\infty}(\T^d;\ell^2)$ for each fixed $n\geq 1$.
In particular, \eqref{eq:uniform_estimate_varrhon_strategy} cannot be derived from well-known results on $L^p$-theory of SPDEs, as it always requires some degree of regularity of the coefficients and provides a high improvement of the regularity of solutions to the SPDE under consideration, see e.g., \cite{AV_torus,Kry}. In contrast, Meyers' estimates provide a small improvement on the regularity of solutions of (S)PDEs with bounded, measurable, and parabolic coefficients, which is exactly the setting one encounters when dealing with the scaling limit \eqref{eq:sequence_goes_to_zero_linfty}-\eqref{eq:convergence_varrho_strong}.
Indeed, the boundedness is given by \eqref{eq:smoothness_coefficients}, while the parabolicity is uniform due to the Stratonovich formulation of the transport noise, see the proof of Lemma \ref{l:uniform_estimates_diffusive} for details. 
To conclude, let us stress that, due to the lack of uniform smoothness as described in \eqref{eq:smoothness_coefficients}, \eqref{eq:uniform_estimate_varrhon_strategy} \emph{cannot} hold with $r_0$ large. Counterexamples in the deterministic counterpart can be found in \cite{BMV24,M21_singularity}.

\subsubsection{Physical interpretation -- Connection with homogenization}
Arguing as in \cite[Subsection 2.2]{A23}, the scaling limit result of \eqref{eq:sequence_goes_to_zero_linfty}-\eqref{eq:convergence_varrho_strong} can be interpreted as a homogenization result for the SPDE \eqref{eq:diffusive_scalars}. Indeed, as discussed in Subsection \ref{ss:heuristics_viscosity_dependent}, the transport noise term in \eqref{eq:diffusive_scalars} (or, more precisely, for 2D NSEs \eqref{eq:NS_2d_vorticity} for which a similar discussion applies) can be interpreted as the contribution of the `small scales' of the fluid flow. 
Therefore, reasoning as in \cite[Subsection 2.2]{A23}, the limit \eqref{eq:sequence_goes_to_zero_linfty} can be interpreted as `zooming out' from small scales, and the PDE \eqref{eq:strategy_varrhod} can be thought of as the `effective equation' for the SPDE \eqref{eq:diffusive_scalars}. In our context, the `microscopic' parameter is $\varepsilon=\|\theta^n\|_{\ell^{\infty}}\downarrow 0$, cf.\ \eqref{eq:sequence_goes_to_zero_linfty}. 

The homogenization viewpoint is also interesting from a mathematical perspective. Indeed, the main tool to achieve \eqref{eq:convergence_varrho_strong} are the (stochastic) Meyers' estimates which also play a fundamental role in the context of homogenization of PDEs (see e.g., \cite{ABM18_stoc,AKM19,GNO15_stoc,S18_hom_book}) since, as it is well-known in homogenization, the only quantities remaining uniform along the process of zooming out from small scales (i.e., $\varepsilon\downarrow 0$) are measurability, boundedness, and parabolicity.

\subsubsection{Further comparison with the literature}
\label{ss:comparison}
As commented in Subsection \ref{sss:novelty}, the main novelty consists of an improvement of the scaling limit \eqref{eq:convergence_varrho_strong}. Let us note that such an improvement is also interesting on its own. Indeed, the latter can also be used to strengthen other results on the topic. For instance, arguing as in Section \ref{s:anomalous_NS},
one can check that the main results of \cite{FGL21} can be extended also to the case of $L^2(\T^d)$ being critical for the corresponding SPDE (here, we use criticality in the sense of \cite[Section 3]{AV24_variational} and are implicitly assuming that \cite[(H4)]{FGL21} holds with $L^2$ replaced by $H^r$ with $r>0$). For instance, this covers the case of the Keller-Segel model system in four dimensions, cf. \cite[Subsection 2.1]{FGL21}.

\smallskip

We are now in a position to give a comparison between our results and those of 
\cite[Section 3]{HPZZ23} and \cite{R23_kraichnan}.
Our results are, in spirit, similar to the ones in \cite{R23_kraichnan} which, roughly speaking, ensure that anomalous dissipation can happen if the noise is chosen according to the viscosity and/or diffusivity (cf.\ the comments below \cite[Theorem 1.1]{R23_kraichnan}). On the one hand, in our results, due to the application of the scaling limit in \eqref{eq:sequence_goes_to_zero_linfty}-\eqref{eq:convergence_varrho_strong}, we cannot provide an explicit dependence on $\supp\,\theta^\g$ on $\g$ while those in \cite{R23_kraichnan} are rather explicit. 
On the other hand, in our results, we have a sharp control of how the intensity of the noise affects anomalous dissipation, and our methods are robust enough for applications to nonlinear SPDEs, e.g., 2D NSEs in vorticity formulation leading to anomalous dissipation of enstrophy as conjectured in the KB theory.  
From a technical point of view, the approaches are completely different and thus a comparison is not possible.

In \cite{HPZZ23}, the authors showed total dissipation for passive scalars and 2D and (even) 3D weak solutions to the NSEs. Here, total dissipation means that all the initial energy $\|\varrho_0\|_{L^2}>0$ is absorbed by the energy dissipation rates, i.e., $\lim_{t\uparrow 1}\|\varrho^{\g}(t)\|_{L^2} =0$ a.s.\ for all $\g>0$. 
Our results share some similarities with those of \cite[Section 3]{HPZZ23}, where the authors considered transport noise. 
For instance, to the best of our knowledge, they were the first to use the constructions in \cite{FGL21,G20_convergence} in the context of anomalous dissipation.
However, there are several differences between our results and those in \cite[Section 3]{HPZZ23}, and in particular in the mechanism for creating anomalous dissipation. Firstly, instead of assuming a viscosity/diffusivity dependent $\theta$ in either \eqref{eq:NS_2d_vorticity} or \eqref{eq:diffusive_scalars}, they considered a piecewise constant in time noise coefficient $\theta(t)=(\theta_{k}(t))_{k\in \Z^d_0}$ such that $\|\theta(t)\|_{\ell^2}\to \infty$ as $t\uparrow 1$, see \cite[Subsection 2.1]{HPZZ23}. In particular, following the argument in Subsection \ref{ss:heuristics_viscosity_dependent}, extending the identity \eqref{eq:energy_small_scales_two_sides} to the case of piecewise constant in time $\theta$, one sees that their noise has `infinite energy' as $t\uparrow 1$. This is not the case in our construction where, due to the requirement $\theta^{\nu}\in \Set$ for all $\nu>0$, we obtain finite energy of the noise as $\nu\downarrow 0$ and uniformly for $t\in [0,1]$. However, our method cannot deal with 3D NSEs in contrast to \cite{HPZZ23} (see also comments below Theorem \ref{t:anomalous_diss_NS}). Secondly,  from a technical point of view, the proofs are completely different. Indeed, in \cite[Section 3]{HPZZ23}, the authors rely on a careful study of the mixing properties of the transport noise and subsequently upgrade them to an anomalous dissipation result. This approach in particular requires $\sup_{t\in [0,1)}\|\theta(t)\|_{\ell^2}= \infty$. In our manuscript, instead, we employ the above-discussed refinement of the scaling limit arguments of \cite{G20_convergence,FGL21} via Meyers' estimates. 
To conclude, it would be interesting to see if the arguments used in \cite[Section 4]{HPZZ23} to derive anomalous dissipation by advection via randomly forced NSEs can benefit from the use of Meyers' estimates.
However, this goes beyond the scope of the current manuscript.

\section{Preliminaries}

\subsection{Notation}
\label{ss:notation}
Here, we collect the basic notation used in the manuscript.  
For given parameters $p_1,\dots,p_n$, we write $R(p_1,\dots,p_n)$ if the quantity $R$ depends only on $p_1,\dots,p_n$.
For two quantities $x$ and $y$, we write $x\lesssim y$, if there exists a constant $C$ such that $x\le Cy$. If such a $C$ depends on above-mentioned parameters $p_1,\dots,p_n$ we either mention it explicitly or indicate this by writing $C{(p_1,\dots,p_n)}$ and correspondingly $x\lesssim_{p_1,\dots,p_n}y$ whenever $x\le C{(p_1,\dots,p_n)}y$. We write $x\eqsim_{p_1,\dots,p_n} y$, whenever $x\lesssim_{p_1,\dots,p_n} y$ and $y\lesssim_{p_1,\dots,p_n}x$.

Below, $(\Omega, \mathscr{A},(\mathscr{F}_t)_{t\geq 0}, \P)$ denotes a filtered probability space carrying a sequence of independent standard Brownian motions which changes depending on the SPDE under consideration. We write $\E$ for the expectation on $(\Omega, \mathscr{A}, \P)$ and $\mathscr{P}$ for the progressive $\sigma$-field. A process $\phi:[0,\infty)\times \O\to X$ is progressively measurable if $\phi|_{[0,t]\times \O}$ is $\Borel([0,t])\otimes \F_t$ measurable for all $t\geq 0$, where $\Borel$ is the Borel $\sigma$-algebra on $[0,t]$ and $X$ a Banach space. Moreover, a stopping time $\tau$ is a measurable map $\tau:\O\to [0,\infty]$ such that $\{\tau\leq t\}\in \F_t$ for all $t\geq 0$. 
For a stopping time $\tau$, we define the stochastic interval $[0,\tau)\times \O$ as 
\begin{equation*}
[0,\tau)\times \O\stackrel{{\rm def}}{=}\{(t,\om)\in[0,\infty)\times \O\,:\,\,0\leq t<\tau(\om)\}.
\end{equation*} 
A similar definition is employed for $[0,\tau]\times \O$, $(0,\tau]\times \O$ etc.
Finally, a stochastic process $\phi:[0,\tau)\times \O\to X$ is progressively measurable if $\one_{[0,\tau)\times \O}\,\phi$ is progressively measurable where $\one_{[0,\tau)\times \O}$ stands for the extension by zero outside the set $[0,\tau)\times \O$, and $\tau$ is a stopping time. 

Next, we collect the notation for function spaces. 
We write $L^p(S,\mu;X)$ for the Bochner space of strongly measurable, $p$-integrable $X$-valued functions for a measure space $(S,\mu)$ and a Banach space $X$ as defined in \cite[Section 1.2b]{Analysis1}. 
If $X=\R$, we write $L^p(S,\mu)$, and if it is clear which measure we refer to, we also leave out $\mu$. Finally, $\one_A$ denotes the indicator function of $A\subseteq S$.
Bessel-potential spaces are indicated as usual by $H^{s,q}(\T^d)$ where $s\in \R$ and $q\in (1,\infty)$. 
We also use the standard shorth-hand notation $H^{s,2}(\T^d)$ for $H^{s}(\T^d)$. We sometimes also employ Besov spaces $B^{s}_{q,p}(\T^d)$ which can be defined as the real interpolation space $(H^{-k,q}(\T^d),H^{k,q}(\T^d))_{(s+k)/2k,p}$ for $s\in \R$, $\N\ni k>|s|$ and $1<q,p<\infty$. The reader is referred to \cite{BeLo,Analysis1} for details on interpolation and to \cite[Section 3.5.4]{schmeisser1987topics} for details on function spaces over $\T^d$. Finally, we let $\mathcal{A}(\T^d;\R^k)\stackrel{{\rm def}}{=}(\mathcal{A}(\T^d))^k$ and $\mathcal{A}(\cdot)\stackrel{{\rm def}}{=}\mathcal{A}(\T^d;\cdot)$ for $\mathcal{A}\in \{L^q,H^{s,q},B^{s}_{q,p}\}$ and $k\in \N$.

\subsection{Structure of the noise}
\label{ss:structure_noise}
Here, we describe the quantities $\theta^{\nu}$, $\theta^{\g}$, $\sigma_{k,\alpha}$, $W^{k,\alpha}$ and $W^k$ appearing in the stochastic perturbation of \eqref{eq:NS_2d_vorticity} and \eqref{eq:diffusive_scalars}, while $c_d\stackrel{{\rm def}}{=}d/(d-1)$. In this subsection, we follow \cite{FGL21,FL19}. 
In the following, we only describe the stochastic perturbation in \eqref{eq:diffusive_scalars}, for the case of NSEs \eqref{eq:NS_2d_vorticity}, it is enough to take $d=2$ in the following construction and omit the index $\alpha$. Moreover, since $\g>0$ is fixed here, we simply write $\theta$ instead of $\theta^\g$.

To begin, set $\Z_0^d\stackrel{{\rm def}}{=}\Z^d\setminus\{0\}$.  
Throughout the manuscript $\theta=(\theta_k)_{k\in \Z^d_0}\in \ell^2(\Z^d_0)$ is normalized and radially symmetric, i.e.\
\begin{equation}
\label{eq:theta_normalized_symmetric}
\|\theta\|_{\ell^2(\Z^d_0)}=1 \quad \text{ and } \quad \theta_{j}=\theta_k \  \text{ for all $j,k\in\Z_0^d$ \  such that }|j|=|k|.
\end{equation}
Next, we define the family of vector fields $(\sigma_{k,\alpha})_{k,\alpha}$. 
Here and in the following, we use the shorthand subscript `$k,\alpha$' to indicate $k\in \Z^d_0,\, \alpha\in\{1,\dots,d-1\}$. 
Let $\Z_{+}^d$ and $\Z_-^d$ be a  partition of $\Z_0^d$ such that $-\Z_+^d=\Z_-^d$. For any $k\in \Z_+^d$, let $\{a_{k,\alpha}\}_{\alpha\in \{1,\dots,d-1\}}$ be a complete orthonormal basis of the hyperplane $k^{\bot}=\{k'\in \R^d\,:\, k\cdot k'=0\}$, 
and set $a_{k,\alpha}\stackrel{{\rm def}}{=}a_{-k,\alpha} $ for $k\in \Z^d_-$. Finally, we let
\begin{equation*}
\sigma_{k,\alpha}\stackrel{{\rm def}}{=} a_{k,\alpha} e^{2\pi \i k\cdot x}  \ \  \text{ for all } \ \ x\in \Tor^d,\ k\in \Z^d_0,\ \alpha\in \{1,\dots,d-1\}.
\end{equation*}
By construction, the vector field $\sigma_{k,\alpha}$ is smooth and divergence-free for all $k,\alpha$.

Finally, we introduce the family of complex Brownian motions $(W^{k,\alpha})_{k,\alpha}$. 
Let $(B^{k,\alpha})_{k,\alpha}$ be a family of independent standard (real) Brownian motions on the above-mentioned filtered probability space $(\O,\A,(\F_t)_{t\geq 0},\P)$. Then, we set
\begin{equation}
\label{eq:complex_BM}
W^{k,\alpha}\stackrel{{\rm def}}{=}
\left\{
\begin{aligned}
&B^{k,\alpha}+ \i B^{-k,\alpha},& \qquad& k\in \Z^d_+,\\
&B^{-k,\alpha}- \i B^{k,\alpha}, &\qquad& k\in \Z^d_-.
\end{aligned}
\right.
\end{equation}
In particular $\overline{W^{k,\alpha}}= W^{-k,\alpha}$ for all $k,\alpha$.

As in \cite[Section 2.3]{FL19} or \cite[Remark 1.1]{FGL21}, by \eqref{eq:theta_normalized_symmetric} and the definition of the vector fields $\sigma_{k,\alpha}$, at least formally, one has
\begin{equation}
\begin{aligned}
\label{eq:Ito_stratonovich_change}
\sqrt{c_d \ellip}\sum_{k,\alpha}\theta_k (\sigma_{k,\alpha}\cdot \nabla) \varrho\circ \dot{W}_t^{k,\alpha}
=
\ellip \Delta \varrho+  \sqrt{c_d \ellip}\sum_{k,\alpha}\theta_k (\sigma_{k,\alpha}\cdot \nabla) \varrho\,  \dot{W}_t^{k,\alpha},
\end{aligned}
\end{equation}
where we recall that $c_d=d/(d-1)$.
To see the above, one uses that $\nabla\cdot\sigma_{k,\alpha}=0$, \eqref{eq:theta_normalized_symmetric} and the elementary identity (cf.\ \cite[eq.\ (2.3)]{FL19} or \cite[eq.\ (3.2)]{FGL21})
\begin{equation}
\label{eq:ellipticity_noise}
\sum_{k,\alpha} \theta_k^2\, \sigma_{k,\alpha}\otimes \overline{\sigma_{k,\alpha}}
=
\sum_{k,\alpha} \theta_k^2 \,a_{k,\alpha}\otimes a_{k,\alpha}
 =\frac{1}{c_d}\mathrm{Id}_{d\times d}\ \   \text{ on }\T^d.
\end{equation}
Let us remark that the stochastic integration on the RHS\eqref{eq:Ito_stratonovich_change} is understood in the It\^{o}--sense.
In the paper, we will always understand the Stratonovich noise on the LHS\eqref{eq:Ito_stratonovich_change} as the RHS\eqref{eq:Ito_stratonovich_change}, namely an It\^o noise plus a diffusion term. 
However, note that the diffusion term $\ellip\Delta \varrho$ does \emph{not} provide any additional diffusion, as in energy estimates it balances the It\^{o} correction coming from the It\^{o}-noise, cf.\ \eqref{eq:energy_balance_diffusion_intro}.

\subsection{Solution concept and well-posedness}
Here, we define solutions to 2D NSEs \eqref{eq:NS_2d_vorticity} and to the scalar equation \eqref{eq:diffusive_scalars}. 
We begin by recalling that the sequence of complex Brownian motions $(W^{k,\alpha})_{k,\alpha}$ induces an $\ell^2$-cylindrical Brownian motion $\mathcal{W}_{\ell^2}$ via the formula
\begin{equation}
\label{eq:def_cylindrical_noise}
\mathcal{W}_{\ell^2}(f)=\sum_{k,\alpha}\int_{\R_+} f_{k,\alpha}(t)\,\dd W^{k,\alpha}_t 
\ \ \  \text{ for } \ \ \
f=(f_{k,\alpha})_{k,\alpha}\in L^2(\R_+;\ell^2).
\end{equation}
Note that $
\mathcal{W}_{\ell^2}(f)$ is real-valued if $f_{k,\alpha}=f_{-k,\alpha}$ as $\overline{W^{k,\alpha}}= W^{-k,\alpha}$ for all $k,\alpha$. Using this and the symmetry of $\sigma_{k,\alpha}$ under the reflection $k\mapsto -k$, one can check that the stochastic perturbation in \eqref{eq:NS_2d_vorticity} and \eqref{eq:diffusive_scalars} can be rewritten only using real-valued coefficients and real-valued Brownian motions $(B^{k,\alpha})_{k,\alpha}$ in \eqref{eq:complex_BM}. For instance,
\begin{align}
\label{eq:equivalence_complex_real_noise}
\sum_{k,\alpha} \theta^{\g}_k(\sigma_{k,\alpha}\cdot\nabla) \varrho\, \dot{W}^{k,\alpha}
&= \sum_{k\in \Z^d_+,\alpha\in \{0,\dots,d-1\}} 2\,\theta^{\g}_k \,(\Re \sigma_{k,\alpha}\cdot\nabla) \varrho\, \dot{B}^{k,\alpha}\\
\nonumber
&+ \sum_{k\in \Z^d_-,\alpha\in \{0,\dots,d-1\}} 2\,\theta^{\g}_k\,(\Im \sigma_{k,\alpha}\cdot\nabla) \varrho \, \dot{B}^{k,\alpha}
\end{align}
for any real-valued function $\varrho$.
Hence, solutions to \eqref{eq:NS_2d_vorticity} and \eqref{eq:diffusive_scalars} are naturally real-valued. 
However, the complex formulation of the noise is more convenient for computations and has been widely employed in related works (e.g., \cite{FGL21,FGL21_quantitative,FL19,L23_enhanced}).  

We begin by defining solutions to the passive scalar equation \eqref{eq:diffusive_scalars}. Below, we use the reformulation of the Stratonovich noise in \eqref{eq:diffusive_scalars}.

\begin{definition}[Solutions -- Passive scalars]
\label{def:sol_passive}
Fix $\g>0$, $\theta^{\g}\in \ell^2$ and $\varrho_0\in L^2(\T^d)$. 
Let $\tau^{\g}:\O\to [0,\infty]$ and $
\varrho^{\g}:[0,\tau^{\g})\times \O\to H^1(\T^d) 
$ 
be a stopping time and a progressive measurable process, respectively.
\begin{itemize}
\item We say that $(\varrho^{\g},\tau^{\g})$ is a \emph{local solution} to \eqref{eq:diffusive_scalars} if there exists a sequence of stopping times $(\tau^\g_n)_{n\geq 1}$ such that $\tau^\g_n\leq \tau^\g$ a.s.\ for all $n\geq 1$, and $\lim_{n\to \infty}\tau^\g_n=\tau^\g$ a.s., for which the following conditions hold for all $n\geq 1$:
\begin{itemize}
\item $\varrho^{\g}\in L^2(0,\tau_n^{\g};H^1(\T^d))\cap C([0,\tau_n^{\g}];L^2(\T^d))$ a.s.;
\item a.s.\ for all $t\in [0,\tau^{\g}_n]$ it holds that 
\begin{align*}
\varrho^{\g}(t)-\varrho_0
= &\,(\g+\mu) \int_0^t \Delta \varrho^{\g}(s)\,\dd s\\
+ &\, \sqrt{c_d\mu}\int_0^t \one_{[0,\tau^{\g}_n]}\Big((\theta^{\g}_k\,\sigma_{k,\alpha}\cdot\nabla)\varrho^{\g}(s)\Big)_ {k,\alpha}\,\dd 
\mathcal{W}_{\ell^2}.
\end{align*}

\end{itemize}
\item A local solution $(\varrho^{\g},\tau^{\g})$ to \eqref{eq:diffusive_scalars} is a said to be a \emph{unique maximal (local) solution} to \eqref{eq:diffusive_scalars} if for any other local solution $(\chi^{\g},\lambda^{\g})$ we have $\lambda^{\g}\leq \tau^{\g}$ a.s.\ and $\chi^{\g}=\varrho^{\g}$ a.e.\ on $[0,\lambda^{\g})\times \O$. 
\item A unique maximal solution $(\varrho^{\g},\tau^{\g})$ to \eqref{eq:diffusive_scalars} is said to be a \emph{unique global solution} to \eqref{eq:diffusive_scalars} if $\tau^{\g}=\infty$ a.s.\ 
\end{itemize}
\end{definition}
 
Note that the deterministic and stochastic integral in Definition \ref{def:sol_passive} are well-defined as a $H^{-1}(\T^d)$-valued Bocher and $L^2(\T^d)$-valued It\^o integrals, respectively. 
The solutions of Definition \ref{def:sol_passive} are \emph{weak} in the PDE sense, but \emph{strong} in the probabilistic sense. In case of global solutions, we only write $\varrho^{\g}$ instead of $(\varrho^{\g},\tau^{\g})$ if no confusion seems likely.

Existence and uniqueness of global solutions to \eqref{eq:diffusive_scalars} in the sense of Definition \ref{def:sol_passive} is standard, see e.g., \cite[Chapter 4]{LR15}. In case $\theta^{\g}_k$ decays sufficiently fast as $|k|\to \infty$ (e.g., $\#\{k\,:\, \theta_k^{\g}\neq 0\}<\infty$), then the unique global solution $\varrho^{\g}$ instantaneously improves its regularity in time and space, cf.\  \cite[Theorems 2.7 and 4.2]{AV23_reactionI}. 

\smallskip

In the case of NSEs \eqref{eq:NS_2d_vorticity}, the definition is analogous. As usual, we interpret the convective term $(\K \vor^{\nu}\cdot\nabla)\vor^{\nu}$ in its conservative form $\nabla \cdot(\K \vor^{\nu}\otimes \vor^{\nu})$ due to the divergence-free condition. 
Note that, as in \eqref{eq:equivalence_complex_real_noise}, one can rearrange the sum in its definition so that only $\Re \sigma_{k,\alpha}$ and $\Im \sigma_{k,\alpha}$ appear. 

\begin{definition}[Solutions -- 2D NSEs vorticity formulation]
\label{def:sol_NSE}
Fix $\nu>0$, $\theta^{\nu}\in \ell^2$ and $\vor_0\in L^2(\T^2)$. Let $\tau^{\nu}:\O\to [0,\infty]$ and $\vor^{\nu}:[0,\tau^\nu)\times \O\to H^1(\T^2)$ be a stopping time and a progressive measurable process, respectively.
\begin{itemize}
\item We say that $(\vor^{\nu},\tau^{\nu})$ is a \emph{local solution} to \eqref{eq:NS_2d_vorticity} if there exists a sequence of stopping times $(\tau^\nu_n)_{n\geq 1}$ such that $\tau^\nu_n\leq \tau^\nu$ a.s.\ for all $n\geq 1$, and $\lim_{n\to \infty}\tau^\nu_n=\tau^\nu$ a.s., for which the following conditions hold for all $n\geq 1$:
\begin{itemize}
\item $\vor^{\nu}\in L^2(0,\tau^\nu_n;H^1(\T^2))\cap C([0,\tau^\nu_n];L^2(\T^2))$ a.s.; 
\item $\K\vor^{\nu} \vor^{\nu}\in L^2(0,\tau^\nu_n;L^2(\T^2;\R^2))$ a.s.;
\item a.s.\ for all $t\in [0,\tau^\nu_n]$ it holds that 
\begin{align*}
\vor^{\nu}(t)-\vor_0
&= \int_0^t \Big[(\nu+\mu) \Delta \vor^{\nu}(s) - \nabla \cdot(\K \vor^{\nu}(s) \vor^{\nu}(s))\Big]\,\dd s\\
&
+ \sqrt{2\mu}\int_0^t \one_{[0,\tau^{\nu}_n]}\Big( \theta^{\nu}_k\,(\sigma_{k}\cdot\nabla)\vor^{\nu}(s)\Big)_ {k}\,\dd 
\mathcal{W}_{\ell^2}.
\end{align*}
\end{itemize}
\item A local solution $(\vor^{\nu},\tau^{\nu})$ to \eqref{eq:NS_2d_vorticity} is said to be a \emph{unique maximal (local) solution} to \eqref{eq:NS_2d_vorticity} if for any other local solution $(\xi^{\nu},\lambda^{\nu})$ we have $\lambda^{\nu}\leq \tau^{\nu}$ a.s.\ and $\xi^{\nu}=\vor^{\nu}$ a.e.\ on $[0,\lambda^{\nu})\times \O$. 
\item A unique maximal solution $(\vor^{\nu},\tau^{\nu})$ to \eqref{eq:NS_2d_vorticity}  is said to be a \emph{unique global solution} to \eqref{eq:NS_2d_vorticity} if $\tau^{\nu}=\infty$ a.s.\ 
\end{itemize}
\end{definition}

As above, the integrals in Definition \ref{def:sol_NSE} are well-defined and, in the case of global solutions, we only write $\vor^{\nu}$ instead of $(\vor^{\nu},\tau^{\nu})$ if no confusion seems likely.

Existence and uniqueness of global solutions to \eqref{eq:NS_2d_vorticity} is standard, see e.g., \cite[Theorem 3.4]{AV24_variational}, \cite[Theorem 7.10]{agresti2025nonlinear} or \cite{FL32_book}. Instantaneous regularization results in case $\theta^{\nu}_k$ decays sufficiently fast as $|k|\to \infty$ can be proven as in \cite[Theorems 2.7 and 2.12]{AV21_NS}.

\section{Anomalous dissipation of energy -- Passive scalars}
\label{s:diffusive_proofs}
This section is devoted to the proof of Theorem \ref{t:anomalous_diss_diff}, and will also serve as a guideline for that of Theorem \ref{t:anomalous_diss_NS}. The strategy used is outlined in Subsection \ref{claim_strategy}.

\subsection{Scaling limit at a fixed diffusivity}
\label{ss:scaling_fixed_diffusivity}
In this subsection, we prove the following scaling limit result for passive scalars \eqref{eq:diffusive_scalars} with fixed diffusivity $\g>0$. 

\begin{proposition}[Scaling limit -- Passive scalars]
\label{prop:scaling_diff}
Fix $\g\in (0,1)$ and $\mu>0$. Let $(\theta^n)_{n\geq 1}\subseteq \ell^2$ be a sequence of normalized radially symmetric coefficients (i.e., satisfying \eqref{eq:theta_normalized_symmetric}) such that
$$
\lim_{n\to \infty} \|\theta^n\|_{\ell^{\infty}}=0.
$$
Let $\delta>0$ and $\varrho_0\in H^\delta(\T^d)$. Then, for all sequences $(\varrho_{0,n})_{n\geq 1}\subseteq H^{\delta}(\T^d)$ such that $\varrho_{0,n}\to \varrho_0$ in $ H^{\delta}(\T^d)$, 
we have 
\begin{equation}
\label{eq:limit_varrho_scaling}
\lim_{n\to \infty} \P\Big(\sup_{t\in [0,1]}\|\varrho_n^{\g}(t)-\rhod^{\g}(t)\|_{L^2(\T^d)}\geq \varepsilon \Big)=0 \  \text{ for all } \varepsilon>0,
\end{equation}
where $\varrho_n^{\g}$ and $\rhod^{\g}$ denotes the unique global solution to \eqref{eq:diffusive_scalars} with $\theta^{\g}=\theta^n$ and initial data $\varrho_{0,n}$ and the unique global solution to
\begin{equation}
\label{eq:diffusion_deterministic}
\left\{
\begin{aligned}
\partial_t \rhod^{\g} 
&= (\g+\mu)\Delta \rhod^{\g}& \text{ on }&\T^d,\\
\rhod^{\g}(0,\cdot)&=\varrho_0& \text{ on }&\T^d;
\end{aligned}
\right.
\end{equation}
respectively.
\end{proposition}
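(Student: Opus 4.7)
My plan follows the blueprint in Subsection \ref{claim_strategy}. I combine two ingredients: (i) weak-topology convergence $\varrho_n^{\g}\to\rhod^{\g}$ in $C([0,1];H^{-\varepsilon}(\T^d))$, obtained from the classical scaling-limit arguments of \cite{FGL21,FL19}; and (ii) a \emph{uniform in $n$} bound in $C([0,1];H^{r_0}(\T^d))$, obtained from the stochastic Meyers' estimate of Appendix \ref{app:Meyers} for some $r_0=r_0(\g,\mu,d)>0$. Interpolation between (i) and (ii) then upgrades the convergence to $C([0,1];L^2(\T^d))$, which is the sharpened statement of \eqref{eq:limit_varrho_scaling}. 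The enhanced diffusivity $\g+\mu$ in \eqref{eq:diffusion_deterministic} arises from the It\^o--Stratonovich correction \eqref{eq:Ito_stratonovich_change}, after which the martingale term vanishes in the limit because $\|\theta^n\|_{\ell^\infty}\to 0$ combined with $\|\theta^n\|_{\ell^2}=1$ forces the noise to concentrate at arbitrarily high frequencies.

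For ingredient (i), I rewrite \eqref{eq:diffusive_scalars} in It\^o form so that the drift is $(\g+\mu)\Delta\varrho_n^{\g}$ and the noise coefficients $\sqrt{c_d\mu}\,\theta^n_k\sigma_{k,\alpha}$ are uniformly bounded in $L^\infty(\T^d;\ell^2)$ by $\|\theta^n\|_{\ell^2}=1$. The energy balance \eqref{eq:energy_balance_diffusion_intro} gives the a.s.\ bound $\sup_{t\in[0,1]}\|\varrho_n^{\g}(t)\|_{L^2}^2 + 2\g\int_0^1\|\nabla\varrho_n^{\g}\|_{L^2}^2\,ds \leq \|\varrho_{0,n}\|_{L^2}^2$ uniformly in $n$. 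Combined with a standard fractional-in-time H\"older bound for $\varrho_n^{\g}$ in a low-regularity Sobolev space extracted from the equation, this yields tightness of the laws in $C([0,1];H^{-\varepsilon}(\T^d))$ for every $\varepsilon>0$. For any subsequential limit, the high-frequency concentration of $\theta^n$ makes the quadratic variation of the martingale part tested against a smooth $\phi$ tend to zero; hence the limit is a deterministic solution of \eqref{eq:diffusion_deterministic} and, by uniqueness, equals $\rhod^{\g}$. A Gy\"ongy--Krylov argument then promotes subsequential convergence in law to convergence in probability of the full sequence in $C([0,1];H^{-\varepsilon}(\T^d))$.

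The technical heart of the proof, and the main obstacle, is ingredient (ii). Standard $L^p$-SPDE theory is not directly applicable: by \eqref{eq:smoothness_coefficients} the $H^r$-norms of the noise coefficients $(\theta^n_k\sigma_{k,\alpha})$ blow up as $n\to\infty$ for every $r>0$, so no uniform smoothness is available. The stochastic Meyers' estimate of Appendix \ref{app:Meyers} is precisely tailored to this scenario: it requires only measurability, boundedness, and uniform parabolicity of the coefficients (the last holds here with ellipticity constant $\g+\mu>0$), and delivers a small but crucial smoothness gain
\[
\sup_{n\geq 1}\E\sup_{t\in[0,1]}\|\varrho_n^{\g}(t)\|_{H^{r_0}(\T^d)}^2 \leq C(\g,\mu,d)\,\|\varrho_0\|_{H^\delta(\T^d)}^2.
\]
Applying the interpolation inequality $\|f\|_{L^2}\leq \|f\|_{H^{-\varepsilon}}^{r_0/(r_0+\varepsilon)}\,\|f\|_{H^{r_0}}^{\varepsilon/(r_0+\varepsilon)}$ to $f=\varrho_n^{\g}(t)-\rhod^{\g}(t)$ (and using that $\rhod^{\g}$ is smooth and uniformly bounded in $H^{r_0}$) converts the $H^{-\varepsilon}$-convergence from ingredient (i) into \eqref{eq:limit_varrho_scaling}.
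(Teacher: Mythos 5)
Your proposal is correct and follows essentially the same route as the paper: both arguments hinge on the stochastic Meyers' estimate to obtain the uniform-in-$n$ bound in $C([0,1];H^{r_0})$ with $r_0>0$ (noting only that $r_0$ must also be taken below $\delta$, so $r_0=r_0(\g,\mu,d,\delta)$), and on the classical scaling-limit identification of the deterministic limit \eqref{eq:diffusion_deterministic}. The only difference is organizational — you first establish convergence in $C([0,1];H^{-\varepsilon})$ and then interpolate against the $H^{r_0}$ bound, whereas the paper runs a single tightness argument in $C([0,1];H^{r_0})\cap C^{s_0}(0,1;H^{-r_1})$ whose compact embedding into $C([0,1];H^{r})$ is proved by exactly the interpolation you use, so the two presentations are interchangeable.
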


Solutions to \eqref{eq:diffusion_deterministic} are understood as in Definition \ref{def:sol_passive} with trivial noise. 

As outlined in Subsections \ref{sss:strategy} and \ref{sss:novelty}, the main novelty and key point in the above result is the presence of the $L^2$-norm in \eqref{eq:limit_varrho_scaling}, and the key behind such improvement is the use of the stochastic Meyers' estimates of Section \ref{app:Meyers}. In the proof of Proposition \ref{prop:scaling_diff} below, we actually prove a stronger version of Proposition \ref{prop:scaling_diff}. More precisely, we show the existence of $\delta_0=\delta_0(\mu,\g)>0$ such that, for all $\delta\in (0,\delta_0]$ and sequences $(\varrho_{0,n})_{n\geq 1}\subseteq H^{\delta}$ such that $\varrho_{0,n}\to \varrho_0$ in $H^\delta$, it holds that 
\begin{equation}
\label{eq:limit_varrho_scaling_strong}
\lim_{n\to \infty} \P\Big(\sup_{t\in [0,1]}\|\varrho^\g_n(t)-\rhod^\g(t)\|_{H^{r }}\geq \varepsilon \Big)=0
\end{equation}
for all $\varepsilon>0$ and $r<\delta<\delta_0$. We emphasize that the norm appearing in \eqref{eq:limit_varrho_scaling} is well-defined, since--as shown in the proof of Proposition \ref{prop:scaling_diff} below--for all $r<\delta\leq \delta_0$ and $n\geq 1$,
$$
\varrho^{\g}_n \in C([0,\infty);H^{r})\text{ a.s.\ }\quad  \text{ and } \quad \rhod^{\g}\in C([0,\infty);H^{r}).
$$
In contrast to the 2D NSEs analysed in Section \ref{s:anomalous_NS} below, the stronger convergence in \eqref{eq:limit_varrho_scaling_strong} will not be needed.

\smallskip

Before going further, let us mention that a prototype example of a sequence satisfying the assumption of Proposition \ref{prop:scaling_diff} is given by 
\begin{equation}
\label{eq:choice_thetan}
\theta^{n}=\frac{\Theta^{n}}{\|\Theta^{n}\|_{\ell^2}} \quad \text{ where }\quad 
\Theta^n \stackrel{{\rm def}}{=} \one_{\{n\leq |k|\leq 2n\}}\frac{1}{|k|^{r}} \ \ \text{ for some }r>0.
\end{equation}
For details, see eq.\ (1.9) in \cite{L23_enhanced} and the comments below it.
In particular, the above sequence satisfies $\# \{k\,:\, \theta_k^n \neq 0\}<\infty$ for all $n\geq 1$.

\smallskip

Now, we turn to the proof of Proposition \ref{prop:scaling_diff}. 
The key ingredient is the following estimates that are uniform in the class normalized radially symmetric $\theta^\g$.

\begin{lemma}[Uniform in $\theta$-estimates]
\label{l:uniform_estimates_diffusive}
Fix $\g\in (0,1)$ and $\mu>0$. Then there exist $p_0(\g,\mu)>2$ and $C_0(\g,\mu)>0$ such that, for all $p\in [2,p_0]$ and all normalized radially simmetric $\theta^{\g}\in \ell^2$ (i.e., satisfying \eqref{eq:theta_normalized_symmetric}) and $\varrho_0\in B^{1-2/p}_{2,p}$, the unique global solution $\varrho$ to \eqref{eq:diffusive_scalars} satisfies
$$
\E\sup_{t\in [0,1]}\|\varrho^{\g}(t)\|_{B^{1-2/p}_{2,p}}^p
+\E\int_0^1 \|\varrho^{\g}(t)\|_{H^1}^p\, \dd t
\leq C_0\|\varrho_0\|_{B^{1-2/p}_{2,p}}^p.
$$
\end{lemma}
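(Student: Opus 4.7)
The plan is to cast the Stratonovich equation in It\^o form via the identity \eqref{eq:Ito_stratonovich_change}, obtaining a linear parabolic SPDE with leading operator $(\g+\mu)\Delta$ and transport-noise term $(\sqrt{c_d\mu}\,\theta_k\sigma_{k,\alpha}\cdot\nabla)_{k,\alpha}$. The case $p=2$ of the estimate is then immediate from the energy balance \eqref{eq:energy_balance_diffusion_intro}: since \eqref{eq:ellipticity_noise} ensures that the It\^o correction exactly compensates the extra $\mu\Delta$, the only dissipative contribution left is $\g|\nabla \varrho^\g|^2$, and integrating in time gives
\begin{equation*}
\sup_{t\in[0,1]}\|\varrho^\g(t)\|_{L^2}^2+2\g\int_0^1\|\nabla \varrho^\g\|_{L^2}^2\,\dd t\le \|\varrho_0\|_{L^2}^2\ \text{ a.s.},
\end{equation*}
which after taking expectation yields the desired bound at $p=2$ (since $B^{0}_{2,2}=L^2$).

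For $p$ strictly above $2$, the standard $L^p$-theory for SPDEs (e.g.\ \cite{AV_torus,Kry}) does not suffice: as recorded in \eqref{eq:smoothness_coefficients}, the coefficients $(\theta_k\sigma_{k,\alpha})_{k,\alpha}$ are only uniformly bounded in $L^\infty(\T^d;\ell^2)$, with no uniform Sobolev regularity across the class \eqref{eq:theta_normalized_symmetric}. My approach is therefore to invoke the stochastic Meyers' estimates proved in Appendix \ref{app:Meyers}, which provide stochastic maximal $L^p$-regularity in the pair $(H^{-1},H^1)$ with trace space $B^{1-2/p}_{2,p}$, valid for $p\in[2,p_0]$ with $p_0>2$ depending only on the $L^\infty$-bound of the noise coefficients and on the effective parabolicity constant. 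Both inputs are uniform in $\theta$ within \eqref{eq:theta_normalized_symmetric}: $\|\theta\|_{\ell^2}=1$ combined with $\|\sigma_{k,\alpha}\|_{L^\infty}\le 1$ gives $\|(\theta_k\sigma_{k,\alpha})_{k,\alpha}\|_{L^\infty(\T^d;\ell^2)}\le 1$, while after the It\^o--Stratonovich reduction the effective symmetric part of the generator is $\g\Delta$, with ellipticity constant $\g>0$. Since no external forcing is present, the Meyers estimate then delivers
\begin{equation*}
\E\|\varrho^\g\|^p_{L^p(0,1;H^1)}+\E\|\varrho^\g\|^p_{C([0,1];B^{1-2/p}_{2,p})}\lesssim_{p,\g,\mu}\|\varrho_0\|^p_{B^{1-2/p}_{2,p}},
\end{equation*}
which implies the claim via the trace-space embedding $L^p(0,1;H^1)\cap W^{1,p}(0,1;H^{-1})\hookrightarrow C([0,1];B^{1-2/p}_{2,p})$.

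I expect the principal obstacle to be the uniformity of the Meyers exponent $p_0$ over the whole class \eqref{eq:theta_normalized_symmetric}: it must depend only on $\g$ and $\mu$ (and the dimension), not on the spectral support of $\theta$, so that the estimate survives the scaling limit $\|\theta^n\|_{\ell^\infty}\downarrow 0$ used in Proposition \ref{prop:scaling_diff}. Technically, this amounts to a Gehring-type self-improvement argument applied to the It\^o stochastic integral, viewing the multiplicative transport term as a bounded measurable perturbation of the constant-coefficient heat semigroup generated by $(\g+\mu)\Delta$; this is precisely the content of Appendix \ref{app:Meyers}. The smallness of $p_0-2$ is intrinsic to the setting (as emphasized in \eqref{eq:smoothness_coefficients} and the accompanying discussion), but this is harmless for the scaling limit argument, where any fixed exponent $p>2$ providing uniform smoothness $1-2/p>0$ is enough to extract compactness.
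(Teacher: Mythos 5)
Your proposal is correct and follows essentially the same route as the paper: one verifies that the boundedness and parabolicity hypotheses \eqref{eq:boundedness_linear}--\eqref{eq:parabolicity_linear} hold uniformly over normalized radially symmetric $\theta$ (via $\|\theta\|_{\ell^2}=1$ and the identity \eqref{eq:ellipticity_noise}, which give $\kappa=\g+\mu$, $\kappa_0=\mu$ and hence the effective gap $\g>0$), and then invokes Theorem \ref{t:Meyers_parabolic} together with the trace-space remark \eqref{eq:time_regularity_estimates_1}. The only cosmetic difference is that the appendix establishes the estimate by a Meyers-style perturbation and method-of-continuity argument rather than a Gehring-type self-improvement, but since you defer to Appendix \ref{app:Meyers} for that step this does not affect the validity of your proof.
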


\begin{proof}
The claimed estimate follows from the stochastic Meyers' inequality of Theorem \ref{t:Meyers_parabolic} and the comments below it, see in particular \eqref{eq:time_regularity_estimates_1} and the comments on non-trivial initial data for \eqref{eq:parabolic_linear}. 
For clarity, let us check the assumptions \eqref{eq:boundedness_linear}-\eqref{eq:parabolicity_linear} with constants that are \emph{uniform} in the class of normalized radially symmetric $\theta^{\g}\in \ell^2$. To this end, we employ the real reformulation \eqref{eq:equivalence_complex_real_noise} of the complex transport noise in \eqref{eq:diffusive_scalars}.  
To this end, for $\alpha\in \{0,\dots,d-1\}$ and $x\in\T^d$, we let 
\begin{align*}
\xi_{k,\alpha}(x)\stackrel{{\rm def}}{=}2\sqrt{c_d\mu} \, \theta_k^{\g}
\left\{
\begin{aligned}\Re [\sigma_{k,\alpha}(x)]=
\cos(2\pi k\cdot x)\, a_{k,\alpha}, &\quad& k\in \Z^d_+,\\
\Im [\sigma_{k,\alpha}(x)]=\sin(2\pi k\cdot x)\, a_{k,\alpha}, &\quad& k\in \Z^d_-. 
\end{aligned}
\right.
\end{align*}
From the normalized condition $\|\theta^{\g}\|_{\ell^2}=1$, it immediately follows that 
$
\|(\xi_{k,\alpha})_{k,\alpha}\|_{\ell^2} \leq 2\sqrt{c_d\mu}
$
which yields \eqref{eq:boundedness_linear} with constant uniform in $\theta^{\g}$. 
Finally, to check \eqref{eq:parabolicity_linear}, note that \eqref{eq:ellipticity_noise} and $a_{-k,\alpha}=a_{k,\alpha}$ imply, for all $\eta\in \R^d$, 
$$
\frac{1}{2}\sum_{k,\alpha} (\xi_{k,\alpha}\cdot \eta)^2 =
c_d\mu \sum_{k,\alpha}(\theta_k^{\g})^2\, (a_{k,\alpha}\cdot \eta)^2 =\mu |\eta|^2.
$$ 
Thus, the assumptions \eqref{eq:boundedness_linear}-\eqref{eq:parabolicity_linear} with $(\xi_n)_{n\geq1}$ replaced by an enumeration of $(\xi_{k,\alpha})_{k,\alpha}$ are uniform w.r.t.\ normalized radially simmetric $\theta^{\g}\in \ell^2$, and therefore the claim of Lemma \ref{l:uniform_estimates_diffusive} follows from Theorem \ref{t:Meyers_parabolic} with $\kappa=\g+\mu$ and $\kappa_0=\mu$. Here we also used that the leading differential operator in the It\^o formulation of \eqref{eq:diffusive_scalars} is $(\g+\mu)\Delta $, see Definition \ref{def:sol_passive}. 
\end{proof}

To proceed further, note that, by the It\^o formula (e.g., \cite[Chapter 4]{LR15}) and $\nabla\cdot\sigma_{k,\alpha}=0$, the following energy equality holds for the global solution $\varrho^{\g}$ of \eqref{eq:diffusive_scalars}: 
\begin{equation}
\label{eq:energy_balance_diffusive}
\frac{1}{2}
\|\varrho^{\g}(t)\|_{L^2}^2 +\int_0^t \int_{\T^d}\g\, |\nabla \varrho^{\g}|^2\,\dd x \,\dd s= \frac{1}{2}\|\varrho_0\|^2_{L^2} \quad \text{a.s.\ for all }t>0.
\end{equation}
The above will be used frequently below. 

The following is the last ingredient in the proof of Proposition \ref{prop:scaling_diff}.

\begin{lemma}[Time-regularity estimate]
\label{l:time_regularity_estimate}
Let $\g\in (0,1)$, $\mu>0$ and $a\in (1,\infty)$ be fixed. Fix $\varrho_0\in L^2$, and assume that $\theta^{\g}\in \ell^2$ is normalized and radially symmetric. Let $\varrho^\g$ be the unique global solution to \eqref{eq:diffusive_scalars}, and set  
$$
\mathcal{M}(t)\stackrel{{\rm def}}{=}\sqrt{c_d \mu}\sum_{k} \theta^{\g}_{k,\alpha}\int_0^t (\sigma_{k,\alpha}\cdot\nabla) \varrho^\g \,\dd W^{k,\alpha}_t \qquad \forall t\geq 0.
$$
Then there exists $s_0,r_1,K_0>0$ independent of $(\varrho_0,\theta)$ such that 
\begin{align}
\label{eq:martingale_goes_zero}
\E\big[\|\mathcal{M}\|_{C^{s_0}(0,1;H^{-r_1})}^{2a}\big]
&\leq K_0\|\theta^{\g}\|_{\ell^{\infty}}^{2a}\|\varrho_0\|_{L^2}^{2a},\\
\E\big[\|\varrho^{\g}\|_{C^{s_0}(0,1;H^{-r_1})}^{2a}\big]
&\leq K_0\|\varrho_0\|_{L^2}^{2a}.
\end{align}
\end{lemma}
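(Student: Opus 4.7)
The plan is to reduce both moment bounds to a Kolmogorov continuity argument applied to the $H^{-r_1}$-valued stochastic integral $\mathcal{M}$. After using the It\^{o} formulation of \eqref{eq:diffusive_scalars} in Definition \ref{def:sol_passive} (or, equivalently, the real reformulation \eqref{eq:equivalence_complex_real_noise} of the noise so that standard Hilbert-space BDG applies), the Burkholder--Davis--Gundy inequality yields, for $0\leq s<t\leq 1$,
\[
\E\|\mathcal{M}(t)-\mathcal{M}(s)\|_{H^{-r_1}}^{2a}\lesssim_{a,\mu,d} \E\Big(\int_s^t\sum_{k,\alpha}\|\theta_k^{\g}(\sigma_{k,\alpha}\cdot\nabla)\varrho^{\g}(u)\|_{H^{-r_1}}^2\,\dd u\Big)^a.
\]
The goal is then to dominate the integrand pointwise by $C\|\theta^{\g}\|_{\ell^\infty}^2\|\varrho^{\g}(u)\|_{L^2}^2$. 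Once this is done, the a.s.\ bound $\|\varrho^{\g}(u)\|_{L^2}\leq\|\varrho_0\|_{L^2}$ coming from the energy balance \eqref{eq:energy_balance_diffusive}, together with H\"older's inequality in time, gives $\E\|\mathcal{M}(t)-\mathcal{M}(s)\|_{H^{-r_1}}^{2a}\lesssim\|\theta^{\g}\|_{\ell^\infty}^{2a}\|\varrho_0\|_{L^2}^{2a}|t-s|^a$, and the Kolmogorov continuity theorem -- applied, if needed, at a moment $2a^*$ with $a^*>\max(a,1)$ and then interpolated down via Jensen -- delivers the asserted H\"older bound on $[0,1]$ for some $s_0=s_0(a,d)>0$.

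The only subtle point is the extraction of $\|\theta^{\g}\|_{\ell^\infty}$ rather than $\|\theta^{\g}\|_{\ell^2}=1$: a naive Hilbert--Schmidt computation would use only the $\ell^2$ structure and deliver the latter, which is useless in the scaling limit \eqref{eq:limit_varrho_scaling}. I would exploit the Fourier-shift structure of the noise. Since $\nabla\cdot\sigma_{k,\alpha}=0$ one has $(\sigma_{k,\alpha}\cdot\nabla)\varrho=\nabla\cdot(\sigma_{k,\alpha}\varrho)$, so $\|(\sigma_{k,\alpha}\cdot\nabla)\varrho\|_{H^{-r_1}}\lesssim\|\sigma_{k,\alpha}\varrho\|_{H^{-r_1+1}}$. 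Using $\sigma_{k,\alpha}(x)=a_{k,\alpha}e^{2\pi\i k\cdot x}$ with $|a_{k,\alpha}|=1$ and Plancherel,
\[
\|\sigma_{k,\alpha}\varrho\|_{H^{-r_1+1}}^2 = \sum_{j\in\Z^d}(1+|j+k|^2)^{-r_1+1}|\hat\varrho_j|^2.
\]
Multiplying by $|\theta_k^{\g}|^2$, using $|\theta_k^{\g}|^2\leq\|\theta^{\g}\|_{\ell^\infty}^2$, summing over $(k,\alpha)$ and exchanging the order of summation gives
\[
\sum_{k,\alpha}\|\theta_k^{\g}(\sigma_{k,\alpha}\cdot\nabla)\varrho\|_{H^{-r_1}}^2 \lesssim \|\theta^{\g}\|_{\ell^\infty}^2\|\varrho\|_{L^2}^2 \cdot \sup_{j\in\Z^d}\sum_{k\in\Z^d_0}(1+|j+k|^2)^{-r_1+1},
\]
and the last supremum is finite (by translation invariance of the weight) as soon as $r_1>d/2+1$. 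This is precisely the interplay between pointwise control $|\theta_k^{\g}|\leq\|\theta^{\g}\|_{\ell^\infty}$ and weight summability in Fourier that yields the $\ell^\infty$-factor, and I expect it to be the main obstacle since any cruder reasoning collapses back to the $\ell^2$-normalization.

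The second estimate follows by inserting the SPDE itself, which reads
\[
\varrho^{\g}(t)-\varrho^{\g}(s)=(\g+\mu)\int_s^t\Delta\varrho^{\g}(u)\,\dd u+(\mathcal{M}(t)-\mathcal{M}(s)).
\]
Choosing $r_1$ also to satisfy $r_1\geq 2$, the deterministic drift is bounded in $H^{-r_1}$ by $(\g+\mu)(t-s)\|\varrho_0\|_{L^2}$ via $\|\Delta\varrho^{\g}\|_{H^{-r_1}}\leq\|\varrho^{\g}\|_{L^2}\leq\|\varrho_0\|_{L^2}$, so its $C^1([0,1];H^{-r_1})$-norm is under control; combined with the first bound on $\mathcal{M}$ and the trivial inequality $\|\theta^{\g}\|_{\ell^\infty}\leq\|\theta^{\g}\|_{\ell^2}=1$, this delivers $\E\|\varrho^{\g}\|_{C^{s_0}(0,1;H^{-r_1})}^{2a}\leq K_0\|\varrho_0\|_{L^2}^{2a}$ for the same $s_0,r_1$ as above. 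The same Fourier-shift trick will be the mechanism that, in Proposition \ref{prop:scaling_diff}, forces the martingale contribution to vanish whenever $\|\theta^n\|_{\ell^\infty}\to 0$, thereby upgrading the scaling limit to the $L^2$-valued topology.
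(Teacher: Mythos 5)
Your proposal is correct and is precisely the argument the paper has in mind: the paper omits the proof, citing \cite[Proposition 3.6]{FGL21} and \cite[Lemma 6.3]{A23}, and those references proceed exactly as you do — BDG for the $H^{-r_1}$-valued martingale, the quenched energy bound \eqref{eq:energy_balance_diffusive}, Kolmogorov's continuity criterion, and the crucial extraction of $\|\theta^{\g}\|_{\ell^\infty}$ by exploiting that $\sigma_{k,\alpha}\varrho$ is a Fourier shift of $\varrho$ so that the weight $(1+|j+k|^2)^{1-r_1}$ is summable in $k$ uniformly in $j$ once $r_1>d/2+1$. Your identification of this last step as the only nontrivial point (a crude Hilbert--Schmidt bound would only yield $\|\theta^{\g}\|_{\ell^2}=1$) is exactly right.
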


The key point in the above result is the presence of $\|\theta\|_{\ell^{\infty}}$ on the RHS\eqref{eq:martingale_goes_zero}.

Lemma \ref{l:time_regularity_estimate} is well-known to experts, and it is a consequence of the structure of the noise described in Subsection \ref{ss:structure_noise} and of the energy estimate \eqref{eq:energy_balance_diffusive}, see e.g.,  \cite[Proposition 3.6]{FGL21} or \cite[Lemma 6.3]{A23}. For brevity, we omit the proof.

\begin{proof}[Proof of Proposition \ref{prop:scaling_diff}]
The argument used here is by now well-established; the reader is referred to e.g.,  \cite[Theorem 1.4]{FL19}, \cite[Proposition 3.7]{FGL21} or \cite[Theorem 6.1]{A23}.
We partially repeat it to highlight the fundamental role of the estimate in Lemma \ref{l:uniform_estimates_diffusive} with $p>2$. Moreover, we prove the stronger result in \eqref{eq:limit_varrho_scaling_strong}.

\smallskip

\emph{Step 1: For all $s_0,r_1>0$ and $0\leq r<r_0$, set
\begin{align*}
\Y\stackrel{{\rm def}}{=}C([0,1];H^{r_0})\cap C^{s_0}(0,1;H^{-r_1})\quad\text{ and }\quad
\X\stackrel{{\rm def}}{=}C([0,1];H^{r}).
\end{align*}
The embedding $\Y\embed \X$ is compact. 
Moreover, for any $K\geq 1$, the following set is closed}
$$
\X_K\stackrel{{\rm def}}{=}
\Big\{f\in \X\,:\, \sup_{t\in [0,1]}\|f(t)\|_{L^2}^2+\int_0^1\|\nabla f(t)\|_{L^2}^2\,\dd t\leq K \Big\}.
$$

The final assertion follows from the first one and Fatou's lemma. It remains to prove that $\Y\embed_{{\rm c}}\X$. By interpolation, for all $\wt{r}\in (r,r_0)$ there exists $\wt{s}>0$ such that 
$$
C([0,1];H^{r_0})\cap C^{s_0}(0,1;H^{-r_1})\embed C^{\wt{s}}(0,1;H^{\wt{r}})\embed_{{\rm comp}} C([0,1];H^{r}),$$ 
where we used the Ascoli-Arzelà theorem in the last embedding.
 
 \smallskip

\emph{Step 2: Conclusion}. We begin by collecting some useful facts. Let $p_0>2$ be as in Lemma \ref{l:uniform_estimates_diffusive}. Fix $\delta_0\in (0,1-2/p_0)$, $\delta\in (0,\delta_0]$ and $r,r_0\in [0,\delta)$ such that $r<r_0$. 
Finally, let us select $p\in (2,p_0]$ such that $\delta>1-2/p>r_0$. In particular, we have the following embeddings at our disposal:
\begin{equation}
\label{eq:embedding_initial_data}
H^{\delta}\embed B^{1-2/p}_{2,p}\embed H^{r_0}.
\end{equation}
Now, since $\varrho_{0,n}\to \varrho_0$ in $H^{\delta}$ by assumption, we have $\sup_{n\geq 1}\|\varrho_{0,n}\|_{H^{\delta}}<\infty$. 
Hence, by Lemma \ref{l:uniform_estimates_diffusive}--\ref{l:time_regularity_estimate} and the embedding \eqref{eq:embedding_initial_data},
\begin{equation}
\label{eq:a_priori_estimates}
\sup_{n\geq 1}
\E \Big[ \|\varrho^\g_n\|_{C([0,1];H^{r_0})}^2 + \|\varrho^\g_n\|_{C^{s_0}(0,1;H^{-r_1})}^2 \Big]<\infty.
\end{equation}
Moreover, by the energy equality \eqref{eq:energy_balance_diffusive}, we obtained the quenched estimate:
\begin{equation}
\label{eq:energy_balance_diffusive_2}
\sup_{t\in [0,1]}\|\varrho_n^{\g}(t)\|_{L^2}^2 +\int_0^1 \int_{\T^d} |\nabla \varrho_n^{\g}|^2\,\dd x \,\dd t \leq N_{\g} \ \text{ a.s., }
\end{equation}
for some deterministic constant $N_{\g}\geq 1$ independent of $n$.

To prove \eqref{eq:limit_varrho_scaling}, or even \eqref{eq:limit_varrho_scaling_strong}, it suffices to prove that for each sub-sequence $(n_k)_{k\geq 1}$ we can find a further sub-sequence $(n_{k_j})_{j\geq 1}$ for which 
\begin{equation}
\label{eq:claim_conclusion_diff_eq}
\lim_{n_{k_j}\to \infty} \P\Big(\sup_{t\in [0,1]}\|\varrho^\g_{n_{k_j}}(t)-\rhod^\g(t)\|_{H^{r}}\geq \varepsilon \Big)=0\ \text{ for all }  \varepsilon>0.
\end{equation}
For notational convenience, below, we do not relabel sub-sequences. To begin, let $\X_{N_\g}$ be as in Step 1 with $N_\g$ is as in \eqref{eq:energy_balance_diffusive_2}. Moreover, denote by $\mathcal{L}_n$ the law of $\varrho^{\g}_n$ on $\X_{N_{\g}}$. 
By combining Prokhorov's theorem, Step 1, \eqref{eq:a_priori_estimates} and \eqref{eq:energy_balance_diffusive_2}, it follows that the sequence $(\mathcal{L}_{n})_{n\geq 1}\subseteq \mathcal{P}(\mathcal{X}_{N_{\g}})$ is tight (here $\mathcal{P}$ the space of probability measures on the polish space $\mathcal{X}_{N_{\g}}$).
In particular, there exists $\mu\in \mathcal{P}(\mathcal{X}_{N_{\g}})$ such that 
$
\mathcal{L}_n$ converges weakly to
$ 
\mu.
$
It remains to prove that 
\begin{equation}
\label{eq:mu_equal_delta_det}
\mu=\delta_{\rhod^{\g}}.
\end{equation}
Indeed, by the Portmanteau theorem and the fact that the limit is deterministic, the previous implies \eqref{eq:claim_conclusion_diff_eq}.

To prove \eqref{eq:mu_equal_delta_det}, arguing as either \cite[Step 2, Theorem 6.1]{A23} or \cite[Proposition 3.7]{FGL21}, it follows that 
$$
\mu\big(f\in \X_{N_{\g}}\,:\, f \text{ is a solution to \eqref{eq:diffusion_deterministic} with life-time $\geq 1$}\big)=1.
$$
In the above, we mean that the couple $(f,1)$ is a local solution to \eqref{eq:diffusion_deterministic}, cf.\ Definition \ref{def:sol_passive} and the comments below Proposition \ref{prop:scaling_diff}.
Now, the uniqueness and maximality of solutions to \eqref{eq:diffusion_deterministic} in the class $\X_{N_{\g}}$ yields \eqref{eq:mu_equal_delta_det}.
\end{proof}

We conclude by discussing an interesting consequence of Proposition \ref{prop:scaling_diff}.

\begin{remark}[Lack of $L^2$ gradients convergence]
\label{r:lack_convergence_gradient}
In the setting of Proposition \ref{prop:scaling_diff}, it holds that, in probability in $C([0,1])$,
$$
\lim_{n\to \infty} \int_0^{\cdot}\int_{\T^d}\g\,|\nabla \varrho^{\g}_n|^2\,\dd x \,\dd s 
= \int_0^{\cdot}\int_{\T^d}(\g+\mu)\,|\nabla \rhod^{\g}|^2\,\dd x \,\dd s .
$$
The above follows by arguing as in Subsection \ref{sss:strategy} and using Proposition \ref{prop:scaling_diff}: 
\begin{align*}
2\int_0^{\cdot} \int_{\T^d}\g\,|\nabla \varrho^{\g}_n|^2\,\dd x \,\dd s 
 =&\,\|\varrho_0\|_{L^2(\T^d)}^2-\|\varrho_n^\g(\cdot)\|_{L^2(\T^d)}^2\\
\stackrel{n\to \infty}{\to} &\,\|\varrho_0\|_{L^2(\T^d)}^2-\|\rhod^\g(\cdot)\|_{L^2(\T^d)}^2
=2\int_0^{\cdot}\int_{\T^d}(\g+\mu)\,|\nabla \rhod^{\g}|^2\,\dd x \,\dd s,
\end{align*}
where the last equality follows from the energy balance for \eqref{eq:diffusion_deterministic}. In the above, we also used that $\|\varrho^\g(t)\|_{L^2(\T^d)},\|\rhod(t)\|_{L^2(\T^d)}\leq \|\varrho_0\|_{L^2(\T^d)}$ for all $\g,t>0$ a.s.

In particular, if $\varrho_0\neq 0$ (and therefore $\varrho_0\not\in \R$ as $\int_{\T^d}\varrho_0(x)\,\dd x=0$), then the process $\varrho_n^\g$ does \emph{not} convergence to $ \rhod^\g$ in probability in $L^2(0,1;H^1(\T^d))$.
\end{remark}

\subsection{Proof of Theorem \ref{t:anomalous_diss_diff}}
\label{ss:proofs_diffusive}
The key ingredient in the proof of  Theorem \ref{t:anomalous_diss_diff} is the following consequence of Proposition \ref{prop:scaling_diff} and the choice \eqref{eq:choice_thetan}.

\begin{corollary}
\label{cor:scaling_limit_diffusive}
Let $\g,\varepsilon,\delta\in (0,1)$ and $N,\mu>0$ be fixed. Then there exists a normalized radially symmetric $\theta^{\gamma}\in \ell^2$ (i.e., satisfying \eqref{eq:theta_normalized_symmetric})
for which the following assertion holds. For all $\varrho_0\in H^{\delta}$ satisfying $\|\varrho_0\|_{H^{\delta}}\leq N$, the unique global solution $\varrho^{\g}$ to \eqref{eq:diffusive_scalars} satisfies  
\begin{equation}
\label{eq:choice_varepsilon_diff}
\P\Big(\sup_{t\in [0,1]}\|\varrho^{\g}-\rhod^{\g}\|_{L^2}\leq \varepsilon\Big)>1-\varepsilon,
\end{equation}
where $\rhod^{\g}$ is the unique global solution to \eqref{eq:diffusion_deterministic}.
\end{corollary}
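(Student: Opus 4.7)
The plan is to combine Proposition \ref{prop:scaling_diff} applied to the explicit sequence $(\theta^n)_{n\geq 1}$ from \eqref{eq:choice_thetan}---which is normalized, radially symmetric, and satisfies $\|\theta^n\|_{\ell^\infty}\to 0$---with a compactness argument in the initial data, in order to upgrade the pointwise-in-$\varrho_0$ convergence given by the proposition to a statement that is uniform over the closed ball $\{\varrho_0\in H^\delta:\|\varrho_0\|_{H^\delta}\leq N\}$. The desired $\theta^\gamma$ will then be $\theta^n$ for $n$ sufficiently large, depending on all the fixed parameters $\gamma,\varepsilon,\delta,N,\mu$.

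I would proceed by contradiction. Suppose no such $\theta^\gamma$ exists. Then, along the sequence $(\theta^n)_{n\geq 1}$, there must exist initial data $\varrho_{0,n}\in H^{\delta}$ with $\|\varrho_{0,n}\|_{H^\delta}\leq N$ such that, writing $\varrho_n^\gamma$ for the unique global solution to \eqref{eq:diffusive_scalars} with noise coefficient $\theta^n$ and initial datum $\varrho_{0,n}$, and $\rhod_n^\gamma$ for the solution of \eqref{eq:diffusion_deterministic} with the same initial datum, one has
\begin{equation*}
\P\Big(\sup_{t\in[0,1]}\|\varrho_n^{\gamma}(t)-\rhod_n^{\gamma}(t)\|_{L^2}>\varepsilon\Big)\geq \varepsilon \qquad \text{for all } n\geq 1.
\end{equation*}
Fix now any $\delta'\in(0,\delta)$. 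Since $(\varrho_{0,n})_{n\geq 1}$ is bounded in $H^{\delta}$, the compactness of the embedding $H^{\delta}\embed H^{\delta'}$ on $\T^d$ (Rellich) yields a non-relabelled subsequence converging strongly in $H^{\delta'}$ to some limit $\varrho_0\in H^\delta$ (the regularity of the limit follows from weak lower semicontinuity of the $H^\delta$-norm, although in fact only $\varrho_0\in H^{\delta'}$ is needed below).

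Applying Proposition \ref{prop:scaling_diff} with its smoothness parameter taken to be $\delta'$ in place of $\delta$, we deduce $\varrho_n^\gamma\to\rhod^\gamma$ in probability in $C([0,1];L^2(\T^d))$, where $\rhod^\gamma$ denotes the solution of \eqref{eq:diffusion_deterministic} with initial datum $\varrho_0$. Simultaneously, the $L^2$-contractivity of the heat semigroup generated by $(\gamma+\mu)\Delta$ gives
\begin{equation*}
\sup_{t\in[0,1]}\|\rhod_n^\gamma(t)-\rhod^\gamma(t)\|_{L^2}\leq \|\varrho_{0,n}-\varrho_0\|_{L^2}\longrightarrow 0.
\end{equation*}
Combining these two facts yields $\varrho_n^\gamma-\rhod_n^\gamma\to 0$ in probability in $C([0,1];L^2(\T^d))$, contradicting the displayed inequality above. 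The main subtlety is that Proposition \ref{prop:scaling_diff} requires the approximating initial data to converge in the \emph{same} smoothness space that contains the limit; by relinquishing an arbitrarily small amount of regularity ($\delta\leadsto\delta'$) one purchases precisely the compactness needed to turn the merely bounded sequence $(\varrho_{0,n})$ into a convergent one, which is what enables upgrading the scaling limit to a uniform statement on $H^\delta$-balls.
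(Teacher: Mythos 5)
Your proposal is correct and follows essentially the same route as the paper: the paper also fixes the sequence \eqref{eq:choice_thetan}, argues by contradiction, uses the compact embedding $H^{\delta}\embed H^{\delta'}$ for $\delta'<\delta$ to extract a convergent subsequence of initial data, and then combines Proposition \ref{prop:scaling_diff} with the convergence of the deterministic solutions and the triangle inequality. The only cosmetic difference is that the paper phrases the contradiction as the failure of a uniform-in-$\varrho_0$ limit over the ball $\B_{N,\delta}$, whereas you negate the corollary statement directly; the two are equivalent.
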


As the proof below shows, by using \eqref{eq:limit_varrho_scaling_strong} instead of \eqref{eq:limit_varrho_scaling} in Proposition \ref{prop:scaling_diff}, we can replace the $L^2$-norm in \eqref{eq:choice_varepsilon_diff} by $H^r$ for some $r>0$ depending only on $(\delta,\mu,\g)$.

\begin{proof}
Let $\delta_0>0$ be as in Proposition \ref{prop:scaling_diff}. Without loss of generality, we can assume that $\delta<\delta_0$. Next, we set
$$
\B_{N,\delta} \stackrel{{\rm def}}{=}\{\varrho_0\in H^{\delta}\,:\,\|\varrho_0\|_{H^{\delta}}\leq N\}.
$$
Fix $\varepsilon>0$.
Let $(\theta_n)_{n\geq 1}$ be as in \eqref{eq:choice_thetan}, and due to the comments below it, we have $\lim_{n\to \infty}\|\theta^n\|_{\ell^{\infty}}=0$. To conclude, it is enough to show that, for all $\varepsilon>0$,
\begin{equation}
\label{eq:claim_diffusive_proof}
\lim_{n\to \infty} \sup_{\varrho_0 \in 
\B_{N,\delta}} \P\Big(\sup_{t\in [0,1]}\|\varrho^\g_n(\varrho_0)-\rhod^\g(\varrho_0)\|_{L^2}\geq \varepsilon\Big)=0.
\end{equation}
where $\varrho^\g_n(\varrho_0)$ and $\rhod^\g(\varrho_0)$ denote the solution to \eqref{eq:diffusive_scalars} with $\theta^{\g}=\theta^n$ and \eqref{eq:diffusion_deterministic} both with initial data $\varrho_0$, respectively. 
We prove \eqref{eq:claim_diffusive_proof} by contradiction. Indeed, if \eqref{eq:claim_diffusive_proof} is not true, then that there exists $\varepsilon_0>0$ and a (not-relabeled) sub-sequence $(\varrho_{0,n})_{n\geq 1}\subseteq \B_{N,\delta}$ such that  
\begin{equation}
\label{eq:contradiction_claim_diffusive}
\lim_{n\to \infty}  \P\Big(\sup_{t\in [0,1]}\|\varrho^\g_{n}(t)-\rhodn^\g(t)\|_{L^2}\geq \varepsilon_0\Big)>0,
\end{equation}
where $\varrho^\g_{n}\stackrel{{\rm def}}{=}\varrho^\g_{n}(\varrho_{0,n})$ and $\rhodn^\g\stackrel{{\rm  def}}{=}\rhod^\g(\varrho_{0,n})$.
In the remaining part of this proof, we show that \eqref{eq:contradiction_claim_diffusive} leads to a contradiction with Proposition \ref{prop:scaling_diff}. To this end, note that, for all $\delta_0\in (0,\delta)$, there exists a (again, not-relabeled) sub-sequence $(\varrho_{0,n})$ such that $\varrho_{0,n}\to \varrho_0$ in $H^{\delta_0}$. 
From the latter, it readily follows that 
\begin{equation}
\label{eq:convergence_1_proof_corollary_rho}
\rhodn^\g\to \rhod^\g \text{ in }C([0,1];L^2).
\end{equation}
Recall from Proposition \ref{prop:scaling_diff} that 
\begin{equation}
\label{eq:convergence_2_proof_corollary_rho}
\lim_{n\to \infty} \P\Big(\sup_{t\in [0,1]}\|\varrho^\g_n(t)-\rhod^\g(t)\|_{L^2}\geq \frac{\varepsilon_0}{2}\Big)=0.
\end{equation} 
Hence, from the triangle inequality, \eqref{eq:convergence_1_proof_corollary_rho} and \eqref{eq:convergence_2_proof_corollary_rho} contradict \eqref{eq:contradiction_claim_diffusive}. Thus, \eqref{eq:claim_diffusive_proof} is proved, and this concludes the proof.
\end{proof}

\begin{proof}[Proof of Theorem \ref{t:anomalous_diss_diff}]
We begin by collecting some useful facts. 
Let $N$ and $\delta$ be as in the statement of Theorem \ref{t:anomalous_diss_NS}. In particular, 
\begin{equation}
\label{eq:data_varrho_proof_improved}
 \|\varrho_0\|_{H^{\delta}}\leq N.
\end{equation}
Moreover, by the mean-zero assumption on $\varrho_0$ and the Poincar\'e inequality $\|\varrho\|_{L^2}\leq \frac{1}{2\pi}\|\nabla \varrho\|_{L^2}$, it follows that the unique global solution $\rhod^{\g}$ of \eqref{eq:diffusion_deterministic} satisfies
\begin{equation}
\label{eq:decay_rhod_L2}
\|\rhod^{\g}(t)\|_{L^2}^2\leq e^{-(\mu t)/(4\pi^2)}\|\varrho_0\|_{L^2}^2 \  \text{ for all }t>0.
\end{equation}
We now split the proof into two steps.

\smallskip

\emph{Step 1: For all $\mu>0$ and $\g,\varepsilon\in (0,1)$, there exists $\theta_{\varepsilon}^\g\in \Set$ such that the unique global solution to \eqref{eq:diffusive_scalars} (see Definition \ref{def:sol_passive}) satisfies} 
\begin{equation}
\label{eq:claim_step_1_proof_anomalous_dissipation_varrho_g}
2\,\E \int_0^1 \int_{\T^d} \g\, |\nabla \varrho^\g|^2\,\dd x \,\dd t \geq (1-\varepsilon-e^{-\mu/(4\pi^2)})\|\varrho_0\|_{L^2}^2
- 2\varepsilon\|\varrho_0\|_{L^2} .
\end{equation}
Recall that $\Set$ is defined in \eqref{eq:symmetric_coefficients_set}. 
From Corollary \ref{cor:scaling_limit_diffusive} applied with $\mu,N,\delta$ and $\g$ as above, there exists $\theta_\varepsilon^\g\in \Set$ (depending also on the reamining parameters $\mu,N$ and $\delta$, which are not displayed as they are considered fixed) for which the unique global solution $\varrho^{\g}$ to \eqref{eq:diffusive_scalars} with $\theta^\g=\theta^\g_{\varepsilon}$ satisfies 
\begin{equation}
\label{eq:choice_varepsilon_diff_application}
\P\Big(\sup_{t\in [0,1]}\|\varrho^{\g}-\rhod^{\g}\|_{L^2}\leq \varepsilon\Big)>1-\varepsilon,
\end{equation}
where $\rhod^{\g}$ is the unique global solution of \eqref{eq:diffusion_deterministic}, with $\mu>0$ as above.

Let $\tau$ be the stopping time defined as
\begin{equation}
\label{eq:def_tau_diff}
\tau\stackrel{{\rm def}}{=}\inf\big\{t\in [0,1]\,:\, \|\varrho^{\g}(t)-\rhod^{\g}(t)\|_{L^2}\geq \varepsilon\big\}
\ \ \text{ where } \ \ \inf\emptyset\stackrel{{\rm def}}{=}1. 
\end{equation}
It follows from \eqref{eq:choice_varepsilon_diff_application} that 
\begin{equation}
\label{eq:tau_larger_than1_with_large_prob}
\P(\tau\geq 1)>1-\varepsilon.
\end{equation}
Applying the It\^o's formula to compute $\|\varrho(1)\|_{L^2}^2$ and using $\nabla\cdot \sigma_{k,\alpha}=0$, we have
\begin{align}
\label{eq:energy_balance_varrho_proof_g}
2\int_0^t \int_{\T^d} \g\, |\nabla \varrho^\g|^2\,\dd x \,\dd t
= \|\varrho_0\|_{L^2}^2- \|\varrho(t)\|_{L^2}^2\ \ \text{ a.s.\ for all }t>0.
\end{align}
In particular, 
\begin{align}
\label{eq:integrated_energy_inequality_varrho_proof_g}
2\,\E\int_0^1 \int_{\T^d} \g\, |\nabla \varrho^\g|^2\,\dd x \,\dd t
&= \|\varrho_0\|_{L^2}^2-\E \|\varrho^\g(1)\|_{L^2}^2\\ 
\nonumber
&= \|\varrho_0\|_{L^2}^2-\E[\one_{\{\tau<1\}} \|\varrho^\g(1)\|_{L^2}^2 ]-\E[\one_{\{\tau\geq 1\}}\|\varrho^\g(1)\|_{L^2}^2].
\end{align}
Next, we estimate each term separately. Firstly, as $\|\varrho^\g(1)\|_{L^2}\leq \|\varrho_0\|_{L^2}$ a.s., due to the energy balance \eqref{eq:energy_balance_varrho_proof_g}, it holds that 
\begin{equation}
\label{eq:integrated_energy_inequality_varrho_proof_g1}
\E[\one_{\{\tau<1\}} \|\varrho^\g(1)\|_{L^2}^2 ]\leq \P(\tau<1) \|\varrho_0\|_{L^2}^2\stackrel{\eqref{eq:tau_larger_than1_with_large_prob}}{\leq} \varepsilon  \,\|\varrho_0\|_{L^2}^2.
\end{equation}
Secondly, by the definition of $\tau$, and once again invoking the energy inequalities for $\varrho^\g$ and $\rhod^\g$, we have, a.s.\ on $\{\tau\geq 1\}$,
\begin{align*}
\|\varrho^\g(1)\|_{L^2}^2
&\leq 
\|\rhod^\g(1)\|_{L^2}^2
+\big|
\|\varrho^\g(1)\|_{L^2}^2-\|\rhod^\g(1)\|_{L^2}^2\big|\\
&\leq
\|\rhod^\g(1)\|_{L^2}^2
+\big(
\|\varrho^\g(1)\|_{L^2}+\|\rhod^\g(1)\|_{L^2}\big)
\|\varrho^\g(1)-\rhod^\g(1)\|_{L^2}\\
&\leq e^{-\mu/(4\pi^2)}\|\varrho_0\|_{L^2}^2
+
2\varepsilon\|\varrho_0\|_{L^2}.
\end{align*}
Therefore,
\begin{equation}
\label{eq:integrated_energy_inequality_varrho_proof_g2}
\E [\one_{\{\tau\geq 1\}}\|\varrho^\g(1)\|_{L^2}^2\big]
\leq  e^{-\mu/(4\pi^2)}\|\varrho_0\|_{L^2}^2
+
2\varepsilon \|\varrho_0\|_{L^2}.
\end{equation}
Hence, \eqref{eq:claim_step_1_proof_anomalous_dissipation_varrho_g} follows by putting together the estimates \eqref{eq:integrated_energy_inequality_varrho_proof_g}, \eqref{eq:integrated_energy_inequality_varrho_proof_g1} and \eqref{eq:integrated_energy_inequality_varrho_proof_g2}.

\smallskip

\emph{Step 2: Conclusion}. Let $(\varepsilon_\g)_{\g\in (0,1)}$ be any family of positive numbers in $(0,1)$ satisfying $\lim_{\g\downarrow 0} \varepsilon_\g=0$. Let
\begin{equation}
\label{eq:choice_theta_g_second_step_proof2}
\theta^\g\stackrel{{\rm def}}{=} \theta^{\g}_{\varepsilon^\g}\in \Set
\end{equation} 
where $\theta^\g_{\varepsilon}$ are as in Step 1. 
From the inequality \eqref{eq:claim_step_1_proof_anomalous_dissipation_varrho_g} and $\|\varrho_0\|_{L^2}\leq N$ by \eqref{eq:data_varrho_proof_improved}, we have 
\begin{align*}
2\liminf_{\g\downarrow 0}
\E \int_0^1 \int_{\T^d} \g\, |\nabla \varrho^\g|^2\,\dd x \,\dd t 
&\geq
 \liminf_{\g\downarrow 0} \big[
(1-\varepsilon_\g-e^{-\mu/(4\pi^2)})\|\varrho_0\|_{L^2}^2
-2  N \varepsilon_\g\big]\\
&= (1-e^{-\mu/(4\pi^2)})\|\varrho_0\|_{L^2}^2,
\end{align*}
where $\varrho^{\g}$ is the unique global solution to \eqref{eq:diffusive_scalars} with $\theta^\g$ as in \eqref{eq:choice_theta_g_second_step_proof2}. 
\end{proof}

As announced in Remark \ref{r:anomalous_dissipation_and_convergence}, the argument used to prove Theorem \ref{t:anomalous_diss_diff}  yields the following slightly different version of the latter.

\begin{proposition}[Anomalous dissipation of energy by transport noise -- Passive scalars II]
\label{prop:anomalous_diss_diff2}
Let $d\in \N_{\geq 1}$, $N\geq 1$ and $\delta>0$. For all $\mu>0$ and $\eta\in (0,1-e^{-\mu/(4\pi^2)})$, there exists a family $(\theta^{\g})_{\g\in (0,1)}\subseteq \Set$ such that, for all mean-zero $\varrho_0\in H^{\delta}(\T^d)$ satisfying $N^{-1}\leq \|\varrho_0\|_{L^2}$ and $ \|\varrho_0\|_{H^{\delta}}\leq N$, it holds that 
$$
\inf_{\g\in ( 0,1)} \E\int_0^1 \int_{\T^d}  \g \,|\nabla \varrho^{\g}|^2\,\dd x \,\dd t \geq \frac{\eta}{2}\,\|\varrho_0\|_{L^2(\T^d)}^2,
$$
where $\varrho^\g$ is the unique global smooth solution to \eqref{eq:diffusive_scalars}.
\end{proposition}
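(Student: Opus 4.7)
The plan is to follow Step~1 of the proof of Theorem~\ref{t:anomalous_diss_diff} verbatim, but to choose the tolerance $\varepsilon$ in \eqref{eq:choice_varepsilon_diff_application} \emph{uniformly} in $\g\in(0,1)$ rather than letting it depend on $\g$. The additional hypothesis $\|\varrho_0\|_{L^2}\geq N^{-1}$ is exactly what is needed to absorb the linear-in-$\|\varrho_0\|_{L^2}$ error term on the right-hand side of \eqref{eq:claim_step_1_proof_anomalous_dissipation_varrho_g} into the quadratic dissipation bound. All the substantive analysis has already been done by Corollary~\ref{cor:scaling_limit_diffusive}, whose key feature is that the coefficient $\theta^{\g}\in\Set$ can be chosen uniformly in the initial datum $\varrho_0$ across the ball $\{\|\varrho_0\|_{H^\delta}\leq N\}$.

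Concretely, since by assumption $\eta<1-e^{-\mu/(4\pi^2)}$, first fix
$$\varepsilon=\varepsilon(\mu,\eta,N)\in(0,1)\quad\text{such that}\quad \varepsilon(1+2N)<1-e^{-\mu/(4\pi^2)}-\eta.$$
For this $\varepsilon$ and each $\g\in(0,1)$, apply Corollary~\ref{cor:scaling_limit_diffusive} with parameters $(\g,\varepsilon,\delta,N,\mu)$ to obtain $\theta^{\g}:=\theta^{\g}_{\varepsilon}\in\Set$ for which, for every mean-zero $\varrho_0\in H^{\delta}$ with $\|\varrho_0\|_{H^\delta}\leq N$, the bound \eqref{eq:choice_varepsilon_diff_application} holds. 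The stopping-time argument of Step~1 in the proof of Theorem~\ref{t:anomalous_diss_diff} then yields, with constants independent of $\g$,
$$2\,\E\int_0^1\!\!\int_{\T^d}\g\,|\nabla\varrho^{\g}|^2\,\dd x\,\dd t\geq \big(1-\varepsilon-e^{-\mu/(4\pi^2)}\big)\|\varrho_0\|_{L^2}^2-2\varepsilon\|\varrho_0\|_{L^2}.$$

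At this point the only new ingredient is used: from $\|\varrho_0\|_{L^2}\geq N^{-1}$ we get $\|\varrho_0\|_{L^2}\leq N\|\varrho_0\|_{L^2}^2$, so the linear remainder is controlled by $2\varepsilon N\|\varrho_0\|_{L^2}^2$. Combining this with the choice of $\varepsilon$ above,
$$2\,\E\int_0^1\!\!\int_{\T^d}\g\,|\nabla\varrho^{\g}|^2\,\dd x\,\dd t\geq \big(1-\varepsilon(1+2N)-e^{-\mu/(4\pi^2)}\big)\|\varrho_0\|_{L^2}^2\geq \eta\|\varrho_0\|_{L^2}^2,$$
uniformly in $\g\in(0,1)$, which gives the conclusion after taking the infimum in $\g$.

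There is no genuine analytical obstacle in this argument: the only nontrivial input, namely the uniform scaling limit of Proposition~\ref{prop:scaling_diff} packaged in Corollary~\ref{cor:scaling_limit_diffusive}, is already available, and the remainder of the proof is a careful balancing of constants. The role played by the lower bound $\|\varrho_0\|_{L^2}\geq N^{-1}$, together with the strict inequality $\eta<1-e^{-\mu/(4\pi^2)}$, is precisely to leave a positive margin which absorbs the linear error and allows the $\liminf_{\g\downarrow 0}$ in Theorem~\ref{t:anomalous_diss_diff} to be upgraded to the uniform bound $\inf_{\g\in(0,1)}$.
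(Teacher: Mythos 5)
Your argument is correct and coincides with the paper's own proof: both start from the Step~1 inequality \eqref{eq:claim_step_1_proof_anomalous_dissipation_varrho_g}, use $\|\varrho_0\|_{L^2}\geq N^{-1}$ to bound the linear remainder $2\varepsilon\|\varrho_0\|_{L^2}$ by $2\varepsilon N\|\varrho_0\|_{L^2}^2$, and then fix $\varepsilon$ depending only on $(\mu,\eta,N)$ so that $(1-\varepsilon(1+2N)-e^{-\mu/(4\pi^2)})\geq \eta$, yielding a bound uniform in $\g$. No further comment is needed.
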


The proof requires only a minor modification of that of Theorem \ref{t:anomalous_diss_diff}.

\begin{proof}
From \eqref{eq:claim_step_1_proof_anomalous_dissipation_varrho_g} in the above proof of Theorem \ref{t:anomalous_diss_diff}, for each $\g,\varepsilon\in (0,1)$, we can find $\theta_\varepsilon^\g\in \Set$ such that 
\begin{align*}
2\E \int_0^1 \int_{\T^d} \g\, |\nabla \varrho^\g|^2\,\dd x \,\dd t 
&\geq (1-\varepsilon-e^{-\mu/(4\pi^2)})\|\varrho_0\|_{L^2}^2
- 2  \varepsilon \|\varrho_0\|_{L^2}\\
&\stackrel{(i)}{\geq}\big[(1-\varepsilon-e^{-\mu/(4\pi^2)})- 2 N\varepsilon\big]\,\|\varrho_0\|_{L^2}^2,
\end{align*}
where in $(i)$ we used that $\|\varrho_0\|_{L^2}\geq N^{-1}$ by assumption.
Since $\eta<1-e^{-\mu/(4\pi^2)}$, the claimed estimate follows by choosing $\varepsilon_\eta$ depending only on $N$ and $\mu$ such that $\big[(1-\varepsilon_\eta-e^{-\mu/(4\pi^2)})- 2 N\varepsilon_\eta\big]\geq \eta$, with a corresponding choice of $\theta^\g=\theta^\g_{\varepsilon_\eta}$.
\end{proof}

\section{Anomalous dissipation of enstrophy -- 2D NSEs}
\label{s:anomalous_NS}
This section is devoted to the proof of Theorem \ref{t:anomalous_diss_NS}. Its proof follows the one of Theorem \ref{t:anomalous_diss_diff} given in Section \ref{s:diffusive_proofs} as the convective nonlinearity creates only minor difficulties. This section is organized as follows. In Subsection \ref{ss:scaling_NS_2D}, we first show the global well-posedness of the 2D NSEs with cut-off, where the cut-off is used to tame the nonlinearity. Secondly, in Subsection \ref{ss:proof_anomalous_NS}, we prove Theorem \ref{t:anomalous_diss_NS} by removing the cut-off and arguing as in the case of passive scalars.

\subsection{Scaling limit with cut-off at a fixed Reynolds number}
\label{ss:scaling_NS_2D}
We begin by introducing the 2D NSEs in vorticity formulation with a cut-off. For $\phi\in C_{{\rm }}^{\infty}([0,\infty))$ such that  $\supp\,\phi\subseteq [0,2]$ and $\phi=1$ on $[0,1]$, and 
parameters $r\in (0,1)$ and $R>0$, consider
\begin{equation}
\label{eq:NS_2d_cutoff}
\left\{
\begin{aligned}
\partial_t \vorc^{\nu} &+\phi_{R,r}(\vorc^{\nu})\,\big[ (\K \vorc^{\nu}\cdot \nabla) \vorc^{\nu} \big]
=\nu \Delta \vorc^{\nu} \\
&\qquad \qquad\qquad + \sqrt{2\mu }\sum_{k\in \Z^2_0} \theta_k \,(\sigma_k \cdot\nabla) \vorc^{\nu} \circ \dot{W}_t^k & \text{ on }&\T^2,\\
\vorc^{\nu}(0,\cdot)&=\vor_0& \text{ on }&\T^2,
\end{aligned}
\right.
\end{equation}
where $\theta=(\theta_k)_{k\in \Z^2_0}\in \ell^2$ and
$$
\phi_{R,r}(\vorc^{\nu})\stackrel{{\rm def}}{=}\phi\big(R^{-1}\|\vorc^{\nu}\|_{H^{r}}\big).
$$
Other possible choices of the norm in the cut-off function are possible for the 2D NSEs in vorticity formulation \eqref{eq:NS_2d_vorticity}. However, the above cut-off has the highest smoothness that is compatible with Meyers' estimate in Theorem \ref{t:Meyers_parabolic}. Moreover, the choice of $H^r$ with $r>0$ will be crucial in Section \ref{s:2D_NSE_velocity} below, where we deal with 2D NSEs in velocity formulation, which are critical in $L^2$. 

With a slight abuse of notation, in this subsection, we do not display the dependence of $v^{\nu}$
on $(R,r,\g)$ as we will argue for a fixed value of such parameters.

\smallskip

In contrast to the previous section, to combine Meyers' estimates with the nonlinearity in the 2D NSEs in vorticity formulation (and looking ahead with the above-mentioned $L^2$-criticality of the 2D NSEs in velocity form), we need to consider solutions to \eqref{eq:NS_2d_cutoff} that are more regular than the one in Definition \ref{def:sol_NSE} depending on a parameter $p>2$ ruling the time integrability of solutions.

\begin{definition}[$p$-solutions -- 2D NSEs with cut-off]
\label{def:sol_NSE_cut_off}
Let $p\in [2,\infty)$. Fix $\nu>0$, $\theta^{\nu}\in \ell^2$ and 
$\vor_0\in B^{1-2/p}_{2,p}(\T^2)$. Let $\tau^{\nu}:\O\to [0,\infty]$ and $\vorc^{\nu}:[0,\tau^{\nu})\times \O\to H^1(\T^2) $ be a stopping time and a progressive measurable process, respectively.
\begin{itemize}
\item We say that $(\vorc^{\nu},\tau^{\nu})$ is a \emph{local $p$-solution} to \eqref{eq:NS_2d_cutoff} if there exists a sequence of stopping times $(\tau^\nu_{n})_{n\geq 1}$ such that $\tau^\nu_n\leq \tau^\nu$ a.s.\ for all $n\geq 1$ and $\lim_{n\to \infty} \tau^\nu_n=\tau^\nu$ a.s., for which the following conditions hold for all $n\geq 1$:
\begin{itemize}
\item $\vorc^{\nu}\in L^p(0,\tau^\nu_n;H^1(\T^2))\cap C([0,\tau^\nu_n];B^{1-2/p}_{2,p}(\T^2))$ a.s.; 
\item $\K\vorc^{\nu} \vorc^{\nu}\in L^p(0,\tau^\nu_n;L^2(\T^2;\R^{2}))$ a.s.;
\item a.s.\ for all $t\in [0,\tau^\nu_n]$ it holds that 
\begin{align*}
\vorc^{\nu}(t)-\vor_0
&= (\nu+\mu)\int_0^t \Big( \Delta \vorc^{\nu}(s) - \phi_{R,r}(\vorc^{\nu}(s))\big[  \nabla\cdot(\K \vorc^{\nu}(s) \vorc^{\nu}(s))\big]\Big)\,\dd s\\
&
+ \sqrt{2\mu}\int_0^t \one_{[0,\tau^{\nu}_n]}\Big((\theta^{\nu}_k\,\sigma_{k}\cdot\nabla)\vorc^{\nu}(s)\Big)_ {k}\,\dd 
\mathcal{W}_{\ell^2}.
\end{align*}
\end{itemize}
\item A local $p$-solution $(\vorc^{\nu},\tau^{\nu})$ to \eqref{eq:NS_2d_cutoff} is said to be a \emph{unique maximal (local) $p$-solution} to \eqref{eq:NS_2d_cutoff} if for any other local $p$-solution $(\vorcc^{\nu},\lambda^{\nu})$ we have $\lambda^{\nu}\leq \tau^{\nu}$ a.s.\ and $\vorcc^{\nu}=\vorc^{\nu}$ a.e.\ on $[0,\lambda^{\nu})\times \O$. 
\item A unique maximal $p$-solution $(\vorc^{\nu},\tau^{\nu})$ to \eqref{eq:NS_2d_cutoff} is said to be a \emph{unique global $p$-solution} to \eqref{eq:NS_2d_cutoff} if $\tau^{\nu}=\infty$ a.s.\ 
\end{itemize}
\end{definition}

In the above, $\mathcal{W}_{\ell^2}$ is as in \eqref{eq:def_cylindrical_noise}. For the optimality of the regularity of the initial data, the reader is referred to the comments below Theorem \ref{t:Meyers_parabolic}.
Similar to Definition \ref{def:sol_NSE}, if $p=2$, then we simply write `solution' instead of `$p$-solution'.

The aim of this subsection is to prove the following result.

\begin{proposition}[Scaling limit -- 2D NSEs with cut-off]
\label{prop:scaling_NS}
Fix $\nu\in (0,1)$ and $\mu>0$. 
Let $(\theta^n)_{n\geq 1}\subseteq \ell^2$ be a sequence of normalized radially symmetric coefficients (i.e., satisfying \eqref{eq:theta_normalized_symmetric}) such that
$$
\lim_{n\to \infty} \|\theta^n\|_{\ell^{\infty}}=0.
$$
Then there exists $p_0(\nu,\mu)>2$ for which the following assertion holds. 
If for $p\in (2,p_0]$, $r\in (0,1-2/p)$ and $R>0$ we have  
\begin{enumerate}[{\rm(1)}]
\item $\vor_0\in B^{1-2/p}_{2,p}$;
\item\label{it:scaling_NS_2} there exists a unique solution $\vordd^{\nu}\in C([0,1];H^r)\cap L^2(0,1;H^1)$ on $[0,1]$ to 
\begin{equation}
\label{eq:vdet_proof}
\left\{
\begin{aligned}
\partial_t \vordd^{\nu} +\phi_{R,r}(\vordd^\nu)\,[(\K\vordd^{\nu}\cdot\nabla)\vordd^{\nu}]
&= (\nu+\mu)\Delta \vordd^{\nu}& \text{ on }&\T^2,\\
\vordd^{\nu}(0,\cdot)&=\vor_0& \text{ on }&\T^2;
\end{aligned}
\right.
\end{equation}
\end{enumerate}
then for all $r_0\in (0,1-2/p)$ and all sequences $(\vor_{0,n})\subseteq B^{1-2/p}_{2,p}$ such that $\vor_{0,n}\to \vor_0$ in $B^{1-2/p}_{2,p}$ it holds that 
\begin{equation}
\label{eq:un_scaling_NS}
\lim_{n\to \infty} \P\Big(\sup_{t\in [0,1]}\|\vorcn^{\nu}(t)-\vordd^{\nu}(t)\|_{H^{r_0}}\geq \varepsilon \Big)=0\ \text{ for all } \varepsilon>0,
\end{equation}
where $\vorcn^{\nu}$ is the unique global $p$-solution to \eqref{eq:NS_2d_cutoff} with $\theta=\theta^n$ and initial data $\vor_{0,n}$.
\end{proposition}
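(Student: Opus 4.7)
The plan is to follow the blueprint of Proposition \ref{prop:scaling_diff}, treating the cut-off nonlinearity as a uniformly bounded forcing term. Since $\phi_{R,r}(\vorcn^{\nu})$ is supported on $\{\|\vorcn^{\nu}\|_{H^r}\leq 2R\}$ with $r>0$, the 2D Sobolev embedding $H^{r+1}(\T^2)\embed L^{\infty}(\T^2)$ combined with Biot-Savart gives the a.s.\ pointwise bound
\[
\big\|\phi_{R,r}(\vorcn^{\nu})\,\nabla\cdot(\K\vorcn^{\nu}\,\vorcn^{\nu})\big\|_{H^{-1}}\lesssim_R 1.
\]
Absorbing this into the forcing and applying the stochastic Meyers' estimate (Theorem \ref{t:Meyers_parabolic}) exactly as in Lemma \ref{l:uniform_estimates_diffusive}, one obtains $p_0(\nu,\mu)>2$ such that for every $p\in(2,p_0]$,
\[
\sup_{n\geq 1}\E\Big[\|\vorcn^{\nu}\|_{C([0,1];B^{1-2/p}_{2,p})}^{p}+\|\vorcn^{\nu}\|_{L^p(0,1;H^1)}^{p}\Big]<\infty,
\]
with the uniformity in $n$ relying (as in the scalar case) on the normalization and radial symmetry of $\theta^n$, which yield the uniform boundedness and parabolicity of the It\^o reformulation of the transport noise.

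Next, I would establish a time-regularity estimate in the spirit of Lemma \ref{l:time_regularity_estimate} (cf.\ \cite[Prop.\ 3.6]{FGL21} or \cite[Lemma 6.3]{A23}): there exist $s_0,r_1>0$ with
\[
\E\|\vorcn^{\nu}\|_{C^{s_0}(0,1;H^{-r_1})}^{2a}\lesssim 1,\qquad \E\|\mathcal{M}^n\|_{C^{s_0}(0,1;H^{-r_1})}^{2a}\lesssim \|\theta^n\|_{\ell^{\infty}}^{2a}\longrightarrow 0,
\]
where $\mathcal{M}^n$ is the It\^o stochastic integral in \eqref{eq:NS_2d_cutoff}. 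The quenched energy balance for \eqref{eq:NS_2d_cutoff} (which survives the cut-off since $\phi_{R,r}$ is a scalar multiplier and $\K\vorcn^{\nu}$ is divergence-free) further yields a deterministic constant $N=N(R,\nu)$ with $\sup_{t\in[0,1]}\|\vorcn^{\nu}(t)\|_{L^2}^2+\nu\int_0^1\|\nabla\vorcn^{\nu}\|_{L^2}^2\,\dd t\leq N$ a.s. Arguing as in Step 1 of the proof of Proposition \ref{prop:scaling_diff}, for any $r^{*}\in(r,1-2/p)$ (which is admissible by the hypothesis $r<1-2/p$) the space $\mathcal{Y}=C([0,1];B^{1-2/p}_{2,p})\cap C^{s_0}(0,1;H^{-r_1})$ embeds compactly into $C([0,1];H^{r^{*}}$), so together with the deterministic bound, Prokhorov's theorem gives tightness of the laws $(\mathcal{L}_n)$ on a closed subset $\mathcal{X}_N\subseteq C([0,1];H^{r^{*}})$.

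Finally, along any subsequence extract a weak limit $\mu$ and identify it. The stochastic term vanishes in probability in $C^{s_0}(0,1;H^{-r_1})$ by the martingale estimate above; strong convergence in $C([0,1];H^{r^{*}})$ with $r^{*}>r$ ensures both $\phi_{R,r}(\vorcn^{\nu})\to\phi_{R,r}(\cdot)$ uniformly in time and $\nabla\cdot(\K\vorcn^{\nu}\,\vorcn^{\nu})\to\nabla\cdot(\K(\cdot)\,(\cdot))$ in $C([0,1];H^{-1})$; combined with the weak convergence in $L^p(0,1;H^1)$ from the Meyers' bound, every limit point is supported on solutions to \eqref{eq:vdet_proof} lying in $C([0,1];H^r)\cap L^2(0,1;H^1)$. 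Hypothesis \eqref{it:scaling_NS_2} then forces $\mu=\delta_{\vordd^{\nu}}$, yielding \eqref{eq:un_scaling_NS} for $r_0=r^{*}$; the result for every $r_0\in(0,1-2/p)$ follows by interpolating this convergence with the uniform $C([0,1];B^{1-2/p}_{2,p})$-bound. \textbf{The main obstacle} is the need to pass to the limit \emph{inside} the cut-off $\phi_{R,r}$: this forces strong convergence strictly above the threshold regularity $H^r$, which is precisely what the $p>2$ refinement of Meyers' estimate supplies and what the $H^{-\varepsilon}$-type scaling limits of \cite{FL19,FGL21} cannot.
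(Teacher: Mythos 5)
Your proposal is correct and follows the same overall architecture as the paper: uniform-in-$n$ bounds in $C([0,1];B^{1-2/p}_{2,p})\cap L^p(0,1;H^1)$ via the stochastic Meyers' estimates, a time-regularity/martingale bound proportional to $\|\theta^n\|_{\ell^\infty}$, and then compactness, tightness and identification of the limit exactly as in Proposition \ref{prop:scaling_diff} (the paper's own proof is literally a one-line reduction to that argument, with the uniform estimate supplied by Lemma \ref{prop:uniform_theta_estimate_NSE}). The one place where you genuinely diverge is the control of the cut-off nonlinearity. The paper uses the sub-critical interpolation bound \eqref{eq:sub_criticality_L2}, $\|\nabla\cdot(\K\vor\,\vor)\|_{H^{-1}}\lesssim\|\vor\|_{H^r}^{1+\kappa_r}\|\vor\|_{H^1}^{1-\kappa_r}$, so that after the cut-off one is left with $\lesssim_R\|\vor\|_{H^1}^{1-\kappa_r}$, absorbed into the left-hand side by Young's inequality (after a stopping-time localization). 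You instead exploit the derivative gain of the Biot--Savart operator, $\K:H^r\to H^{r+1}\embed L^\infty$ for $r>0$, to bound the truncated nonlinearity by a constant in $H^{-1}$ and treat it as a bounded forcing. Your estimate is correct and simpler for the vorticity formulation, and it even spares you the absorption step; its drawback is that it is special to the vorticity form: for the velocity formulation of Section \ref{s:2D_NSE_velocity} the nonlinearity $\nabla\cdot(u\otimes u)$ enjoys no such gain, and only the $L^4$-based interpolation route survives, which is why the paper phrases the estimate that way. Two minor points you gloss over, neither a genuine gap: the localization by stopping times needed to make the a priori quantities finite before closing the Meyers' estimate (the proposition does presuppose existence of the global $p$-solution, so this is admissible), and the fact that for the limit identification one needs convergence in $C([0,1];H^{r^*})$ with $r^*\geq r$ so that $\phi_{R,r}$ passes to the limit --- which your choice $r^*\in(r,1-2/p)$ indeed provides, and which is precisely the point of the $L^2$-and-above improvement of the scaling limit.
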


As for Proposition \ref{prop:scaling_diff}, the key point is that the Sobolev space over $\T^2$ in which the convergence in \eqref{eq:un_scaling_NS} takes place has positive smoothness. Moreover, repeating the argument in Remark \ref{r:lack_convergence_gradient}, one sees that $\nabla\vorcn^\nu\not\to\nabla\vordd^{\nu}$ as $n\to \infty$ in probability in $L^2(0,1;L^2)$ in case $\vor_0\not\equiv 0$.  
Moreover, in contrast to the linear case, the uniqueness of (weak) solutions to \eqref{eq:vdet_proof} is not immediate, and therefore we include it as an assumption.

The existence of a unique global $p$-solution to \eqref{eq:NS_2d_cutoff} is proven in Lemma \ref{prop:uniform_theta_estimate_NSE} below. 
(Local) solutions to \eqref{eq:vdet_proof} on $[0,1]$ are as in Definition \ref{def:sol_NSE_cut_off} with trivial noise, $p=2$ and life-time $\geq 1$. Note that the condition $\vord^{\nu}\in C([0,1];H^r)$ is not a-priori given in the requirements for being local solutions, this is why it has been displayed in \eqref{it:scaling_NS_2}. 
Similarly, $\vord^{\nu}\in L^2(0,1;H^1)$ is needed as life-time $\geq 1$ does not imply that $t\mapsto \|\vord^\nu(t)\|_{L^2}^2$ is integrable near $t=1$. Let us emphasize that uniqueness for \eqref{eq:vdet_proof} is only required among all maps on $[0,1]$ which belong to $C([0,1];H^r)\cap L^2(0,1;H^1)$.

\smallskip

The proof of Proposition \ref{prop:scaling_NS} is given at the end of this subsection. 
Following the argument of Subsection \ref{ss:scaling_fixed_diffusivity}, the key ingredients are the following estimates, which are uniform in the class of normalized radially symmetric $\theta^{\nu}$.

\begin{lemma}[Uniform in $\theta$-estimate -- 2D NSEs with cut-off]
\label{prop:uniform_theta_estimate_NSE}
Let $\nu,r\in (0,1)$ and $\mu,R>0$ be fixed. Then there exist $p_0(\nu,\mu)>2$ and $C_0(\nu,r,\mu,R)>0$ for which the following assertion holds. For all $p\in [2,p_0]$, $\vor_0\in B^{1-2/p}_{2,p}$, $r\in (0,1-2/p)$ and all normalized radially symmetric $\theta^{\nu}\in \ell^2$ (i.e., satisfying \eqref{eq:theta_normalized_symmetric}),
there exists a unique global $p$-solution $\vorc^{\nu}$ to \eqref{eq:NS_2d_cutoff} and
\begin{equation}
\label{eq:estimate_uniform_theta_NS}
\E \sup_{t\in [0,1]}\|\vorc^{\nu}(t)\|_{B^{1-2/p}_{2,p}}^p + \E\int_{0}^1 \|\vorc^{\nu}(t)\|_{H^1}^p\,\dd t 
\leq C_0(1+\|\vor_0\|_{B^{1-2/p}_{2,p}}^p).
\end{equation}
\end{lemma}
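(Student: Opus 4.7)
The plan is to apply the stochastic Meyers' estimate of Theorem \ref{t:Meyers_parabolic} exactly as in the proof of Lemma \ref{l:uniform_estimates_diffusive}, treating the cut-off convective term as a lower-order forcing which, thanks to the support of $\phi_{R,r}$, is tame. First, using the real reformulation \eqref{eq:equivalence_complex_real_noise}, I rewrite \eqref{eq:NS_2d_cutoff} in It\^o form with leading operator $(\nu+\mu)\Delta$ and noise coefficients $\xi_k=\sqrt{2\mu}\,\theta^\nu_k\Re\sigma_k$ and $\xi_k=\sqrt{2\mu}\,\theta^\nu_k\Im\sigma_k$. The normalization $\|\theta^\nu\|_{\ell^2}=1$ combined with \eqref{eq:ellipticity_noise} yields $\|(\xi_k)\|_{\ell^2}\leq 2\sqrt{\mu}$ and $\tfrac12\sum_k(\xi_k\cdot\eta)^2=\mu|\eta|^2$, so the hypotheses \eqref{eq:boundedness_linear}--\eqref{eq:parabolicity_linear} hold with constants \emph{uniform} over the class of normalized radially symmetric $\theta^\nu$. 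Hence Theorem \ref{t:Meyers_parabolic} supplies a $\theta^\nu$-independent Meyers exponent $p_0(\nu,\mu)>2$, which I may further shrink in order to ensure that $r<1-2/p_0$.

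The second step is the key nonlinear bound. On $\{\phi_{R,r}(w)\neq 0\}$ one has $\|w\|_{H^r}\leq 2R$, and since the Biot--Savart operator maps $H^r$ into $H^{r+1}$, which embeds continuously into $L^\infty(\T^2)$ because $r+1>1=d/2$, it follows that
$$
\big\|\phi_{R,r}(w)\,\nabla\cdot(\K w\otimes w)\big\|_{H^{-1}}\leq \|\K w\|_{L^\infty}\|w\|_{L^2}\lesssim_{r} R\,\|w\|_{L^2}.
$$
This bound is the heart of the matter: the cut-off turns the quadratic nonlinearity into a forcing controlled by the $L^2$-norm of $w$ alone, on which the It\^o energy identity will provide a pathwise a priori bound.

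Third, local existence and uniqueness of a $p$-solution $(\vorc^\nu,\tau^\nu)$ follows by a standard contraction argument in $C([0,T];B^{1-2/p}_{2,p})\cap L^p(0,T;H^1)$ based on the linear Meyers' estimate and the above nonlinear bound. Applying It\^o's formula to $\|\vorc^\nu\|_{L^2}^2$ and using $\nabla\cdot\sigma_{k,\alpha}=0$ together with the cancellation $\int_{\T^2}\phi_{R,r}(\vorc^\nu)[(\K\vorc^\nu\cdot\nabla)\vorc^\nu]\,\vorc^\nu\,\dd x=0$ (coming from $\nabla\cdot\K\vorc^\nu=0$), the Stratonovich-to-It\^o correction $\mu\Delta\vorc^\nu$ is precisely compensated by the noise quadratic variation, yielding the quenched identity
\begin{equation*}
\tfrac12\|\vorc^\nu(t\wedge\tau^\nu_n)\|_{L^2}^2+\nu\int_0^{t\wedge\tau^\nu_n}\!\|\nabla\vorc^\nu\|_{L^2}^2\,\dd s=\tfrac12\|\vor_0\|_{L^2}^2\quad\text{a.s.}
\end{equation*}
In particular $\|\vorc^\nu(t)\|_{L^2}\leq \|\vor_0\|_{L^2}$ a.s.\ for $t<\tau^\nu$. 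Plugging the nonlinear bound, with $w=\vorc^\nu$, into Meyers' estimate \eqref{eq:time_regularity_estimates_1} on $[0,\tau^\nu_n]$ then yields
$$
\E\sup_{[0,\tau^\nu_n]}\!\|\vorc^\nu\|_{B^{1-2/p}_{2,p}}^p+\E\int_0^{\tau^\nu_n}\!\|\vorc^\nu\|_{H^1}^p\,\dd t\lesssim_{\nu,r,\mu,R}\|\vor_0\|_{B^{1-2/p}_{2,p}}^p+\|\vor_0\|_{L^2}^p,
$$
uniformly in $n$. The resulting a priori bound on the $B^{1-2/p}_{2,p}$-norm rules out finite-time blow-up of $\vorc^\nu$ in the blow-up criterion associated to the maximal $p$-solution, hence $\tau^\nu=\infty$ a.s., and letting $n\to\infty$ delivers \eqref{eq:estimate_uniform_theta_NS}.

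The main obstacle is a dimensional compatibility: the cut-off in $H^r$ must control the nonlinearity within the regularity range allowed by Meyers' theorem, which forces $r\in(0,1-2/p)$ and possibly a further shrinking of $p_0$ depending on $r$. In two dimensions this works for any $r>0$ thanks to $H^{r+1}\embed L^\infty(\T^2)$, which is precisely what makes the cut-off convection term sub-critical under Meyers' regularity; this compatibility will be tighter, and genuinely critical, in the velocity formulation studied in Section \ref{s:2D_NSE_velocity}.
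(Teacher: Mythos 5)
Your proposal is correct and reaches the stated estimate, but the key nonlinear bound is genuinely different from the paper's. The paper controls the cut-off convective term via the sub-critical interpolation inequality \eqref{eq:sub_criticality_L2}, namely $\|\nabla\cdot([\K\vor]\,\vor)\|_{H^{-1}}\lesssim\|\vor\|_{H^r}^{1+\kappa_r}\|\vor\|_{H^1}^{1-\kappa_r}$, and then absorbs the resulting $\|\vor\|_{H^1}^{1-\kappa_r}$ into the left-hand side of Meyers' estimate by Young's inequality. You instead exploit the full derivative gain of the Biot--Savart operator, $\K:H^r\to H^{r+1}\embed L^\infty(\T^2)$ for $r>0$, to bound the truncated nonlinearity by $R\,\|\vor\|_{L^2}$ outright; combined with the pathwise energy identity this makes the forcing uniformly bounded and no absorption is needed. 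Your route is cleaner for the vorticity formulation, and you correctly flag that it is specific to it: in the velocity formulation of Section~\ref{s:2D_NSE_velocity} the nonlinearity $\nabla\cdot(u\otimes u)$ enjoys no such gain, and only the paper's interpolation argument survives. One point you gloss over: your growth bound $\|F(w)\|_{H^{-1}}\lesssim R\|w\|_{L^2}$ does not by itself yield the local Lipschitz estimate needed for your ``standard contraction argument'' (or for the blow-up criterion you invoke); the paper's Step~1 spends real effort on the Lipschitz bound \eqref{eq:F_estimate_NS2D}, in particular on the difference $\phi_{R,r}(\vor)-\phi_{R,r}(\vor')$, where one must order $\|\vor\|_{H^r}$ and $\|\vor'\|_{H^r}$ to retain control of the factor multiplying the cut-off difference. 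The analogous Lipschitz estimate does follow from your $L^\infty$ bounds with the same case analysis, so this is a gap in exposition rather than in substance, but it should be carried out.
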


As in the proof of Lemma \ref{l:uniform_estimates_diffusive}, the key point is the independence of  $(p_0,C_0)$ on $\theta$. 
As in the latter result, the key tools are the stochastic Meyers' estimates of Theorem \ref{t:Meyers_parabolic}. To handle the nonlinearity, we use the sub-criticality of $H^{r}$ for all $r>0$ in the case of 2D NSEs, both in vorticity and velocity form (see Section \ref{s:2D_NSE_velocity} below for the latter). The sub-criticality of $H^r$ is exploited via the following inequality (cf.\ \cite[Lemmas 4.3 and 4.5]{A23}): For all $r<\frac{1}{2}$,
\begin{align}
\label{eq:sub_criticality_L2}
\big\|\nabla\cdot([\K\vor]\, \vor)\big\|_{H^{-1}}
\lesssim \|\vor\|_{L^4}^2
\lesssim\|\vor\|_{H^{1/2}}^2
\lesssim \|\vor\|_{H^{r}}^{1+\kappa_r}\|\vor\|_{H^1}^{1-\kappa_r}
\end{align}
where $\kappa_r=\frac{r}{1-r}>0$. For later use (see Section \ref{s:2D_NSE_velocity} below), let us note that the \eqref{eq:sub_criticality_L2} also holds if $[\K\vor] \vor$ is replaced by $v\otimes v$ with $v\in H^1$.

The above follows from the Sobolev embedding $H^{1/2}(\T^2)\embed L^4(\T^2)$ as well as standard interpolation inequality. The sub-criticality of $H^r$ is encoded in the power $1-\kappa_r<1$ for the $H^1$-norm on the RHS\eqref{eq:sub_criticality_L2}. This fact will allow us to absorb the corresponding term via Young's inequality.

\begin{proof}[Proof of Lemma \ref{prop:uniform_theta_estimate_NSE}]
As we argue at a fixed viscosity $\nu>0$, we remove it from the notation and we simply write $\vorc$ instead of $\vorc^{\nu}$ and similar.
Let us begin by collecting some useful facts. 
Arguing as in the proof of Lemma \ref{l:uniform_estimates_diffusive}, the assumptions of the stochastic Meyers' inequalities of Theorem \ref{t:Meyers_parabolic} holds with constant independent of the choice of normalized radially symmetric $\theta\in \ell^2$, and we let $p_0=p_0(\mu,\nu)>2$ be the corresponding integrability parameter for which the estimate of Theorem \ref{t:Meyers_parabolic} holds for all $p\in [2,p_0)$. 
Below $p\in (2,p_0]$ is fixed. 

The proof is now split into three steps. We first prove the existence of a unique maximal $p$-solution $(\vorc,\tau)$ with a corresponding blow-up criterion. Secondly, we prove that a localized (in time) version of \eqref{eq:estimate_uniform_theta_NS} holds for such a maximal solution. Finally, in the third step, we prove that $\tau=\infty$ a.s.\ (i.e., $(\vorc,\tau)$ is global) and that \eqref{eq:estimate_uniform_theta_NS} holds.

\smallskip

\emph{Step 1: There exists a unique maximal (unique) $p$-solution $(\vorc,\tau)$ to \eqref{eq:NS_2d_cutoff}, and for all $T\in (0,\infty)$, the following blow-up criterion holds}
\begin{equation}
\label{eq:blow_up_truncated_NS}
\P\Big(\tau<T,\, \sup_{t\in [0,\tau)}\|\vorc(t)\|_{B^{1-2/p}_{2,p}}^p+\int_0^t \|\vorc(s)\|_{H^1}^p\,\dd s <\infty\Big)=0.
\end{equation}
To prove the existence of a unique maximal (local) $p$-solution to \eqref{eq:NS_2d_cutoff}, we employ \cite[Theorem 4.8]{AV19_QSEE_1} (or \cite[Theorem 4.7]{agresti2025nonlinear}). To this end, we rewrite \eqref{eq:NS_2d_cutoff} as a stochastic evolution equation on $X_0\stackrel{{\rm def}}{=}H^{-1}$:
\begin{equation}
\label{eq:SEE}
\dd \vorc + A \vorc \, \dd t = F(\vorc)\,\dd t + B \vorc\,\dd \mathcal{W}_{\ell^2}, \qquad \vorc(0)=\vor_0
\end{equation}
where $\mathcal{W}_{\ell^2}$ is as in \eqref{eq:def_cylindrical_noise}, and for $\vorc\in X_1\stackrel{{\rm def}}{=}H^{1}$,
\begin{align*}
A \vorc = -(\nu+\mu)\Delta \vorc & , \qquad  F(\vorc)=\phi_{R,r}(\vorc)\, \nabla\cdot ([\K \vorc]\, \vorc), \\ 
 B \vorc&=((\theta_k\sigma_{k,\alpha}\cdot \nabla )\vorc )_{k,\alpha}.
\end{align*}
Comparing Definition \ref{def:sol_NSE_cut_off} with \cite[Definition 4.4]{AV19_QSEE_1}, one readily see that unique maximal $p$-solution to \eqref{eq:NS_2d_cutoff} are equivalent to maximal $L^p_0$-solution to \eqref{eq:SEE} (see also \cite[Remark 5.6]{AV19_QSEE_2}). Here, we are implicitly using the reformulation \eqref{eq:equivalence_complex_real_noise} of the noise in terms of real-valued Brownian motions. Set 
\begin{equation}
\label{eq:definition_Xbeta_proof_cut_off}
X_{\beta}\stackrel{{\rm def}}{=} [X_0,X_1]_{\beta}= H^{-1+2\beta}\  \text{ for } \ \beta\in (0,1),
\end{equation} 
where $[\cdot,\cdot]_{\beta}$ denotes the complex interpolation functor, see e.g., \cite[Chapter 4]{BeLo}. By \cite[Theorem 4.8]{AV19_QSEE_1} or \cite[Theorem 4.7]{agresti2025nonlinear}, to show the existence of a unique maximal $p$-solution, it suffices to show the existence of $1-1/p<\beta\leq \varphi < 1$ such that $\varphi+\beta<2-1/p$, and 
for all $\vor,\vor'\in X_1$,
\begin{align}
\label{eq:F_estimate_NS2D}
\|F(\vor)-F(\vor')\|_{X_0}
&\lesssim (\|\vor\|_{X_{3/4}}+\|\vor'\|_{X_{3/4}})\|\vor-\vor'\|_{X_{3/4}}\\
\nonumber
&+(\|\vor\|_{X_{\varphi}}+\|\vor'\|_{X_{\varphi}})\|\vor-\vor'\|_{X_{\beta}}.
\end{align}
Next, we prove \eqref{eq:F_estimate_NS2D}. Without loss of generality, we assume that $\|\vor'\|_{H^r}\leq \|\vor\|_{H^r}$. Under the latter condition, we write $F(\vor)-F(\vor')= I_{\vor,\vor'}+ J_{\vor,\vor'}$ where 
\begin{equation}
\label{eq:definition_Jvorvorprime_Ivorvorprime}
\begin{aligned}
I_{\vor,\vor'}&=\phi_{R,r}(\vor)\, \nabla\cdot ([\K \vor]\, \vor)-\nabla \cdot ([\K \vor'] \vor'),\\
J_{\vor,\vor'}&= \nabla\cdot ([\K\vor'] \,\vor')\,\big( \phi_{R,r}(\vor)- \phi_{R,r}(\vor')\big).
\end{aligned}
\end{equation}
The first term on the RHS\eqref{eq:F_estimate_NS2D} clearly follows by estimating $I_{\vor,\vor'}$ and using $X_{3/4}=H^{1/2}(\T^2)\embed L^4(\T^2)$. Arguing as in \eqref{eq:sub_criticality_L2}, by interpolaton, for all $r<\frac{1}{2}$, we have
\begin{align}
\label{eq:interpolation_inequality_proof_p_solutions_cutoff}
\big\|\nabla\cdot([\K\vor']\,\vor')\big\|_{H^{-1}}
\lesssim \|\vor'\|_{H^{1/2}}^{2}
\lesssim \|\vor'\|_{H^{r}}\|\vor'\|_{H^{1-r}}.
\end{align}
We emphasize that the choice of the space $H^{1-r}$ for the interpolation in \eqref{eq:interpolation_inequality_proof_p_solutions_cutoff} is, to a certain extent, arbitrary. For our purposes, we only need the spaces appearing on the RHS\eqref{eq:interpolation_inequality_proof_p_solutions_cutoff} to be of lower order compared to $X_1=H^1$.

Now, coming back to \eqref{eq:F_estimate_NS2D}, as are assuming $\|\vor\|_{H^r}\geq \|\vor'\|_{H^{r}}$, it follows that  
\begin{equation}
\label{eq:estimate_phivorvorprime}
|\phi_{R,r}(\vor)-\phi_{R,r}(\vor')|=0 \ \ \text{ if }\ \ \|\vor'\|_{H^r}\geq 2R.
\end{equation}
In particular, 
\begin{align*}
\|J_{\vor,\vor'}\|_{H^{-1}}
&\stackrel{\eqref{eq:interpolation_inequality_proof_p_solutions_cutoff}}{\lesssim}
\|\vor'\|_{H^r}\|\vor'\|_{H^{1-r}}\,
|\phi_{R,r}(\vor)-\phi_{R,r}(\vor')|\\
&\stackrel{\eqref{eq:estimate_phivorvorprime}}{\lesssim_R}
\|\vor'\|_{H^{1-r}}\,
|\phi_{R,r}(\vor)-\phi_{R,r}(\vor')| \\
&\ \lesssim_R\|\vor'\|_{H^{1-r}}\,
\|\vor-\vor'\|_{H^r}.
\end{align*}
Thus, it follows from \eqref{eq:definition_Xbeta_proof_cut_off} that, for all $\beta>1-\frac{1}{p}$ and $\varphi\in [(1-\frac{r}{2})\vee \beta,1)$, 
\begin{equation}
\label{eq:Jvorvorprime_estimate}
\|J_{\vor,\vor'}\|_{H^{-1}}\lesssim \|\vor'\|_{X_{\varphi}}\,\|\vor-\vor'\|_{X_{\beta}}.
\end{equation}
Hence, the condition $\beta+\varphi<2-\frac{1}{p}$ follows by choosing $\beta=1-\frac{1}{p}+\varepsilon$ and $\varphi=1-2\varepsilon$ with $\varepsilon$ sufficiently small depending only on $r$ and $p$. Hence, \eqref{eq:F_estimate_NS2D} follows from \eqref{eq:Jvorvorprime_estimate} and the comments below \eqref{eq:definition_Jvorvorprime_Ivorvorprime}. 

Finally, \eqref{eq:blow_up_truncated_NS} follows from \cite[Theorem 4.10(3)]{AV19_QSEE_2} applied to \eqref{eq:SEE}.

\smallskip

\emph{Step 2: Proof of $\tau=\infty$.}
We begin with a localization argument. Fix $T<\infty$. For all $m\geq 1$, let $\tau_m$ be the stopping time given by
$$
\tau_m\stackrel{{\rm def}}{=}\inf\Big\{t\in [0,\tau\wedge T)\,:\, \|\vorc(t)\|^p_{B^{1-2/p}_{2,p}}+ \int_0^t \|\vorc(s)\|_{H^1}^p\,\dd s \geq m\Big\}
$$ 
where $\inf\emptyset\stackrel{{\rm def}}{=}\tau\wedge T$.
The Meyers' estimates of Theorem \ref{t:Meyers_parabolic} and a standard localization argument (see e.g., \cite[Proposition 3.12(b)]{AV19_QSEE_1}) ensure that existence of $C_1(\nu,\mu)>0$ such that, for all $m\geq 1$, $p\in [2,p_0]$ and $u_0\in B^{1-2/p}_{2,p}$,  
\begin{align*}
\E\sup_{t\in [0,\tau_m]}\|\vorc(t)\|_{B^{1-2/p}_{2,p}}^p 
&+ \E \int_0^{\tau_m}\|\vorc(t)\|^p_{H^1}\,\dd t
\leq C_1 \|\vor_0\|_{B^{1-2/p}_{2,p}}^p \\
&+ C_1\E \int_0^{\tau_m} \phi_{R,r}(\vorc)\,\big\|\nabla\cdot ([\K \vorc] \,\vorc)\big\|_{H^{-1}}^p\, \dd t .
\end{align*}
Here $p_0$ is as at the beginning of the proof.

Now, the sub-critical estimate \eqref{eq:sub_criticality_L2} yields, for all $\vor\in H^1(\T^2)$, 
\begin{align*}
\phi_{R,r}(\vor)\,\big\|\nabla ([\K \vor]\, \vor)]\big\|_{H^{-1}} 
&\leq K_{r}
\phi_{R,r}(\vor) \|\vor\|_{H^{r}}^{1+\kappa_r} \|\vor\|_{H^1}^{1-\kappa_r}\\
&\leq K_{r}
(2R)^{1+\kappa_r}\|\vor\|_{H^{1}}^{1-\kappa_r}\\
&\leq 
\frac{1}{2C_1}\|\vor\|_{H^{1}} + K_{r,R,\nu,\mu}
\end{align*}
where we used the Young inequality and that $\phi_{R,r}(\cdot,\vor)=0$ if $\|\vor\|_{H^{r}}\geq 2R$.

Hence, combining the previous estimates, we obtain, for all $m\geq 1$,
$$
\E\sup_{t\in [0,\tau_m]}\|\vorc(t)\|_{B^{1-2/p}_{2,p}}^p 
+ \E \int_0^{\tau_m}\|\vorc(t)\|^p_{H^1}\,\dd t 
\leq 2C_1 \|\vor_0\|_{B^{1-2/p}_{2,p}}^p +K_{r,R,\nu,\mu}.
$$
Let $C_0\stackrel{{\rm def}}{=} (2C_1)\vee K_{r,R,\nu,\mu}$. Note that $C_0$ is independent of $m\geq 1$ as are $C_1$ and $K_{r,R,\nu,\mu}$. Thus, since $\lim_{m\to \infty} \tau_m=\tau\wedge T$ a.s., the above implies
\begin{equation}
\label{eq:estimate_uniform_theta_NS_stopped}
\E\sup_{t\in [0,\tau\wedge T)}\|\vorc(t)\|_{B^{1-2/p}_{2,p}}^p 
+ \E \int_0^{\tau\wedge T}\|\vorc(s)\|^p_{H^1}\,\dd s 
\leq C_0 (1+\|\vor_0\|_{B^{1-2/p}_{2,p}}^p) .
\end{equation}
Hence, $\|\vorc(t)\|_{B^{1-2/p}_{2,p}}^p 
+  \int_0^{\tau\wedge T}\|\vorc(s)\|^p_{H^1}\,\dd s<\infty$ a.s.\ on $\{\tau<T\}$, and therefore $\tau\geq T$ a.s.\ by \eqref{eq:blow_up_truncated_NS}. The arbitrariness of $T$ implies $\tau=\infty$ a.s., as desired.

\smallskip

\emph{Step 3: Proof of \eqref{eq:estimate_uniform_theta_NS}}. The estimate \eqref{eq:estimate_uniform_theta_NS} follows from \eqref{eq:estimate_uniform_theta_NS_stopped} and the fact that $\tau=\infty$ a.s.\ by Step 2. 
\end{proof}

\begin{proof}[Proof of Proposition \ref{prop:scaling_NS}]
The proof follows almost verbatim the one of Proposition \ref{prop:scaling_diff} as the content of Lemma \ref{l:time_regularity_estimate} also holds with $(\varrho^{\g},\g)$ replaced by $(\vorc^{\nu},\nu)$. 
\end{proof}

\subsection{Proof of Theorem \ref{t:anomalous_diss_NS}}
\label{ss:proof_anomalous_NS}
Parallel to the proof of Theorem \ref{t:anomalous_diss_diff}, the key ingredient in Theorem \ref{t:anomalous_diss_NS} is the following consequence of the above scaling limit.

\begin{corollary}
\label{cor:strong_convergence_theta}
Let $\delta,\nu,\varepsilon\in (0,1)$ and $N,\mu>0$ be fixed. Then there exists $\theta^{\nu}\in \ell^2$ such that 
$$
\|\theta^{\nu}\|_{\ell^2}=1\quad \text{ and }\quad \#\{k\in \Z^2_0\,:\, \theta^{\nu}_k \neq 0\}<\infty,
$$
and that for all $\|\vor_0\|_{H^{\delta}}\leq N$ the unique global solution $\vor^{\nu}$ to \eqref{eq:NS_2d_vorticity} satisfies  
$$
\P\Big(\sup_{t\in [0,1]}\|\vor^{\nu}-\vord^{\nu}\|_{L^2}\leq \varepsilon\Big)>1-\varepsilon
$$
where $\vord$ is the unique global solution to
\begin{equation}
\label{eq:deterministic_NS_proof}
\left\{
\begin{aligned}
\partial_t \vord^{\nu} +(\K\vord^{\nu} \cdot\nabla) \vord^{\nu}
&=  (\nu+\mu)\Delta \vord^{\nu}& \text{ on }&\T^2,\\
\vord^{\nu}(0,\cdot)&=\vor_0& \text{ on }&\T^2.
\end{aligned}
\right.
\end{equation}
\end{corollary}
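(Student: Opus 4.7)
The argument follows the structure of Corollary \ref{cor:scaling_limit_diffusive}, substituting Proposition \ref{prop:scaling_NS} for Proposition \ref{prop:scaling_diff}. The principal complication is that Proposition \ref{prop:scaling_NS} concerns the truncated equations \eqref{eq:NS_2d_cutoff}--\eqref{eq:vdet_proof}, whereas the statement of Corollary \ref{cor:strong_convergence_theta} concerns the untruncated system \eqref{eq:NS_2d_vorticity}--\eqref{eq:deterministic_NS_proof}. The plan is to choose the cut-off threshold $R$ large enough that, on an event of probability at least $1-\varepsilon$, the cut-off is inactive for both the stochastic and the deterministic solutions, and then invoke pathwise uniqueness to identify them with $\vor^{\nu}$ and $\vord^{\nu}$ respectively. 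The convergence in a space of \emph{positive} smoothness provided by Proposition \ref{prop:scaling_NS}---the principal novelty of this paper---is what makes this possible.

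\textbf{Setup and cut-off threshold.} Pick $p \in (2, p_0(\nu,\mu)]$ with $p_0$ as in Lemma \ref{prop:uniform_theta_estimate_NSE}, further small enough that $1-2/p < \delta$, so that $H^{\delta} \hookrightarrow B^{1-2/p}_{2,p}$. Fix $r \in (0, \min(\delta, 1-2/p))$. Standard parabolic regularity for the 2D NSEs \eqref{eq:deterministic_NS_proof} with viscosity $\nu+\mu > 0$ and $\vor_0 \in H^{\delta}$ yields a constant $M = M(N,\delta,\nu,\mu,r) \geq 1$ with
\[
\sup_{\|\vor_0\|_{H^{\delta}} \leq N} \|\vord^{\nu}\|_{C([0,1]; H^{r})} \leq M.
\]
Set $R := 3M$. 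Then $\|\vord^{\nu}\|_{H^{r}}/R \leq 1/3$, so $\phi_{R,r}(\vord^{\nu}) \equiv 1$ on $[0,1]$, and $\vord^{\nu}$ is therefore a solution of \eqref{eq:vdet_proof}. Standard uniqueness for the cut-off 2D NSE in $C([0,1]; H^r) \cap L^2(0,1; H^1)$ (the nonlinearity is Lipschitz after truncation) then gives $\vordd^{\nu} = \vord^{\nu}$, verifying hypothesis~(2) of Proposition \ref{prop:scaling_NS}.

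\textbf{Scaling limit and decoupling.} Take $(\theta^n)_{n\geq 1} \subseteq \Set$ as in \eqref{eq:choice_thetan}, so that $\|\theta^n\|_{\ell^{\infty}} \to 0$. Proposition \ref{prop:scaling_NS} applied with $r_0 = r$ yields, for every $\eta > 0$,
\[
\lim_{n \to \infty} \P\bigl(\|\vorcn^{\nu} - \vord^{\nu}\|_{C([0,1]; H^{r})} \geq \eta\bigr) = 0,
\]
where $\vorcn^{\nu}$ is the unique global $p$-solution of \eqref{eq:NS_2d_cutoff} with $\theta = \theta^n$. Taking $\eta := \min(M/2, \varepsilon)$, on the complementary event one has $\|\vorcn^{\nu}\|_{C([0,1]; H^{r})} \leq 3M/2 < R$, so $\phi_{R,r}(\vorcn^{\nu}) \equiv 1$ and $\vorcn^{\nu}$ actually solves the \emph{uncut} equation \eqref{eq:NS_2d_vorticity} on $[0,1]$; pathwise uniqueness then forces $\vorcn^{\nu} = \vor^{\nu}$ a.s.\ on this event. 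Since $\|\cdot\|_{L^2} \leq \|\cdot\|_{H^{r}}$, this gives $\|\vor^{\nu} - \vord^{\nu}\|_{C([0,1];L^2)} \leq \eta \leq \varepsilon$ on an event of probability exceeding $1-\varepsilon$ for $n$ large, which yields the claim for each fixed $\vor_0$.

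\textbf{Uniformity in $\vor_0$ and main obstacle.} Uniformity over the ball $\{\|\vor_0\|_{H^{\delta}} \leq N\}$ is then obtained by the very same compactness/contradiction argument used in the proof of Corollary \ref{cor:scaling_limit_diffusive}: any hypothetical failing sequence $(\vor_{0,n})$ would, by Rellich compactness in $H^\delta \hookrightarrow \hookrightarrow B^{1-2/p}_{2,p}$, admit a subsequence converging in $B^{1-2/p}_{2,p}$, which contradicts Proposition \ref{prop:scaling_NS} applied to this varying initial data (together with continuous dependence for \eqref{eq:deterministic_NS_proof}). The \emph{main obstacle} is the decoupling step above: it requires convergence of $\vorcn^{\nu}$ to $\vord^{\nu}$ in the very norm $H^r$ used inside $\phi_{R,r}$ in order to trap $\|\vorcn^{\nu}\|_{H^r} < R$ with high probability. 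Any convergence merely in a negative-smoothness space---such as in the scaling limits of \cite{FGL21,FL19}---would not suffice, and this is precisely where the improved Proposition \ref{prop:scaling_NS}, powered by Meyers' estimates, is essential.
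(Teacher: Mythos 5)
Your proposal is correct and follows essentially the same route as the paper: bound $\vord^{\nu}$ in $C([0,1];H^{r_0})$ uniformly over the ball via Proposition \ref{prop:Hdelta_estimates}, choose the cut-off level $R$ above that bound so that hypothesis (2) of Proposition \ref{prop:scaling_NS} holds with $\vordd^{\nu}=\vord^{\nu}$, apply the positive-smoothness scaling limit, and on the resulting high-probability event deactivate the cut-off and identify $\vorcn^{\nu}$ with $\vor^{\nu}$ by uniqueness. The only step you state loosely is ``pathwise uniqueness forces $\vorcn^{\nu}=\vor^{\nu}$ a.s.\ on this event'': since the event is not $\F_0$-measurable this must be implemented, as in the paper, by introducing the stopping time $\tau_0=\inf\{t:\|\vorcn^{\nu}(t)\|_{H^r}\geq R\}$, observing that $(\vorcn^{\nu},\tau_0)$ is a local solution of the uncut equation so that $\vor^{\nu}=\vorcn^{\nu}$ on $[0,\tau_0]$, and that the good event is contained in $\{\tau_0=1\}$.
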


For the proof of the above, we need the following result.

\begin{proposition}[$H^{\delta}$-estimates -- 2D NSEs in vorticity formulation]
\label{prop:Hdelta_estimates}
Let $\kappa>0$, $\delta\in (0,1]$ and let $\vor$ be unique global solution to the 2D NSEs in vorticity formulation on $\T^2$, i.e.,
\begin{equation}
\label{eq:2D_NS_appendix}
\partial_t \vor +(\K\vor\cdot\nabla) \vor=\kappa\Delta \vor, \ \ \  \vor(0)=\vor_0\in H^{\delta}(\T^2)\  \text{ with }\ \ \textstyle{\int_{\T^2}}\vor_0(x)\,\dd x =0.
\end{equation}
Then there exists a non-decreasing mapping $N:[0,\infty)\to [1,\infty)$ such that 
\begin{equation}
\label{eq:claimed_estimates_Hdelta}
\sup_{t\in [0,\infty)}\|\vor(t,\cdot) \|_{H^{\delta}}+
\|\vor\|_{L^2(\R_+;H^{1+\delta})} \leq N(\|\vor_0\|_{L^2})\,\|\vor_0\|_{H^{\delta}}.
\end{equation}
\end{proposition}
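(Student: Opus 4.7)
The existence of the unique global smooth solution to \eqref{eq:2D_NS_appendix} is classical, so the plan is to derive the a priori estimate \eqref{eq:claimed_estimates_Hdelta} on this solution (and transfer it by approximation, which is immediate here). The starting point is the standard $L^2$ energy equality (using $\nabla\cdot u = 0$):
\[
\tfrac{1}{2}\|\vor(t)\|_{L^2}^2 + \kappa \int_0^t \|\nabla \vor(s)\|_{L^2}^2\,\dd s = \tfrac{1}{2}\|\vor_0\|_{L^2}^2 \qquad \text{for all }t\geq 0.
\]
This handles the case $\delta=0$ and, more importantly, yields the time-integrability of $\|\nabla \vor\|_{L^2}^2$ over $\R_+$, which will be the key ingredient for closing the Gr\"onwall argument below uniformly in $t$.

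For $\delta\in(0,1]$ I would test the equation against $(-\Delta)^\delta \vor$ (legitimate by parabolic smoothing, and via Galerkin truncation if one wishes to be rigorous), obtaining
\[
\tfrac{1}{2}\tfrac{\dd}{\dd t}\|\vor\|_{\dot H^\delta}^2 + \kappa\,\|\vor\|_{\dot H^{1+\delta}}^2 = -\big\langle (u\cdot\nabla)\vor,\, (-\Delta)^\delta \vor\big\rangle.
\]
Integrating by parts and invoking the fractional Leibniz rule on $\T^2$, together with Biot--Savart regularity $\|u\|_{H^{s+1}}\lesssim \|\vor\|_{H^s}$, the 2D Sobolev embedding $H^{1/2}(\T^2)\embed L^4(\T^2)$, and interpolation between $H^\delta$ and $H^{1+\delta}$, the nonlinear term is dominated by a finite sum of quantities of the form $\|\vor\|_{L^2}^{a_i}\,\|\vor\|_{\dot H^\delta}^{b_i}\,\|\vor\|_{\dot H^{1+\delta}}^{c_i}$ with each $c_i<2$. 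Young's inequality then absorbs every factor of $\|\vor\|_{\dot H^{1+\delta}}$ into $\kappa\|\vor\|_{\dot H^{1+\delta}}^2$, producing
\[
\tfrac{\dd}{\dd t}\|\vor\|_{\dot H^\delta}^2 + \kappa\,\|\vor\|_{\dot H^{1+\delta}}^2 \leq C_\kappa\,P(\|\vor\|_{L^2})\,\|\vor\|_{\dot H^\delta}^2,
\]
for a polynomial $P$ of degree at least $2$.

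To upgrade this differential inequality to a bound uniform in $t$, I would exploit the Poincar\'e inequality for mean-zero functions on $\T^2$: $\|\vor\|_{L^2}\leq (2\pi)^{-1}\|\nabla \vor\|_{L^2}$. Combined with $\|\vor(t)\|_{L^2}\leq \|\vor_0\|_{L^2}$, this converts every power $\|\vor(t)\|_{L^2}^\alpha$ with $\alpha\geq 2$ into $\lesssim \|\vor_0\|_{L^2}^{\alpha-2}\,\|\nabla \vor(t)\|_{L^2}^2$, hence a quantity integrable in $t$ by the $L^2$ energy balance. Gr\"onwall's lemma then gives $\sup_{t\geq 0}\|\vor(t)\|_{H^\delta}^2 \leq \|\vor_0\|_{H^\delta}^2\,\exp(F(\|\vor_0\|_{L^2}))$ for an explicit non-decreasing $F$, and integrating the differential inequality in $t$ furnishes the $L^2(\R_+;H^{1+\delta})$ bound; together these yield \eqref{eq:claimed_estimates_Hdelta}. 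The main obstacle is carrying out the fractional product estimate uniformly for $\delta\in(0,1]$, since the interpolation exponents degenerate at the endpoints: it is convenient either to split into the ranges $\delta\in(0,1/2]$ (where $H^{\delta-1/2}\embed L^2$) and $\delta\in(1/2,1]$ (where one interpolates between $L^2$ and $H^\delta$), or to replace the fractional Leibniz step by a Kato--Ponce commutator estimate on $[\Lambda^\delta,u\cdot\nabla]\vor$, using the cancellation $\langle u\cdot\nabla\Lambda^\delta \vor,\Lambda^\delta \vor\rangle=0$ that follows from $\nabla\cdot u=0$.
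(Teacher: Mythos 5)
Your argument is correct in substance, but it takes a different (more hands-on) route than the paper, which does not write out a proof at all: it merely records that the proposition follows from $L^2$-maximal regularity for the heat semigroup on $H^{-1+\delta}(\T^2)$ combined with the subcriticality of the vorticity nonlinearity, i.e.\ a soft perturbative/functional-analytic argument in the scale $H^{-1+\delta}\to H^{1+\delta}$. Your approach — testing against $(-\Delta)^{\delta}\vor$, Kato--Ponce/fractional Leibniz, interpolation, Young, and Gr\"onwall — is the classical energy-method proof of propagation of $H^{\delta}$ regularity, and it is entirely self-contained; what the paper's route buys is that the same subcriticality bookkeeping (essentially your estimate \eqref{eq:sub_criticality_L2}-style interpolation) is done once at the level of the nonlinearity $F$ rather than inside a commutator computation. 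You correctly isolate the one genuinely important point, namely that the Gr\"onwall coefficient must be integrable over all of $\R_+$ (not just locally) to get a bound uniform in $t\in[0,\infty)$, and that this follows from the dissipation identity $\int_0^\infty\|\nabla\vor\|_{L^2}^2\,\dd s\leq \|\vor_0\|_{L^2}^2/(2\kappa)$ together with Poincar\'e for mean-zero data. One small imprecision: after Young's inequality the coefficient multiplying $\|\vor\|_{\dot H^{\delta}}^2$ is in general not a polynomial in $\|\vor\|_{L^2}$ alone — a typical term is $\|\vor\|_{L^2}^{1/2}\|\vor\|_{H^{\delta}}^{3/2}\|\vor\|_{H^{1+\delta}}$, which after absorption leaves the factor $\|\vor\|_{L^2}\|\vor\|_{H^{\delta}}$. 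This is harmless (for $\delta\leq 1$ and mean-zero $\vor$ one has $\|\vor\|_{H^{\delta}}\lesssim\|\nabla\vor\|_{L^2}$, so the coefficient is still dominated by $\|\nabla\vor\|_{L^2}^2$ up to constants depending on $\|\vor_0\|_{L^2}$, hence still in $L^1(\R_+)$), but you should state the Gr\"onwall inequality with a coefficient of the form $a(t)\lesssim_{\kappa}(1+\|\vor_0\|_{L^2}^{m})\|\nabla\vor(t)\|_{L^2}^2$ rather than $P(\|\vor(t)\|_{L^2})$. With that adjustment, integrating the differential inequality also gives the $L^2(\R_+;H^{1+\delta})$ half of \eqref{eq:claimed_estimates_Hdelta} (the inhomogeneous part of the norm again being controlled via Poincar\'e), and the resulting $N$ is non-decreasing in $\|\vor_0\|_{L^2}$ as required.
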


The above is a standard consequence of $L^2$-maximal regularity estimates on the space $H^{-1+\delta}$ and the subcriticality of the 2D NSEs in vorticity formulation. For brevity, we omit the details.
We are now ready to prove Corollary \ref{cor:strong_convergence_theta}. 

\begin{proof}[Proof of Corollary \ref{cor:strong_convergence_theta}]
Let $p_0(\nu,\mu)>2$ be as in Proposition \ref{prop:scaling_NS}. Fix $p\in (2,p_0)$ such that $1-2/p<\delta$ and $r,r_0$ such that $r<r_0<1-2/p$. Note that the choice of $p$ yields $\vor_0\in B^{1-2/p}_{2,p}$, cf.\ \eqref{eq:embedding_initial_data}. 
Next, by Proposition \ref{prop:Hdelta_estimates}, there exists $K_0=K_0(N,r_0)>0$ such that the unique global solution $\vord^\nu$ to \eqref{eq:deterministic_NS_proof} satisfies
\begin{equation}
\label{eq:a_priori_vd_proof}
\sup_{t\in [0,1]}\|\vord^{\nu}(t)\|_{H^{r_0}}\leq K_0.
\end{equation}

Next, to apply Proposition \ref{prop:scaling_NS} with $R=K_0+1$ and $(r,r_0)$ as above, we check that \eqref{eq:vdet_proof} has a unique solution on $[0,1]$ and it is given by $\vord^{\nu}$. To begin, as $\vord^{\nu}$ is a unique global solution to \eqref{eq:deterministic_NS_proof} satisfying the bound \eqref{eq:a_priori_vd_proof}, then it is also a local solution to \eqref{eq:vdet_proof} on $[0,1]$ with $\vord^{\nu}\in C([0,1];H^r)\cap L^2(0,1;H^1)$. It remains to discuss the uniqueness (see the comments below Proposition \ref{prop:scaling_NS} for the uniqueness concept used here). To this end, we mimic a stopping-time argument. Let $\vordd^{\nu,\prime}$ be another global solution to \eqref{eq:vdet_proof} with $\vordd^{\nu,\prime}\in C([0,1];H^r)$. Let
$$
e^\prime \stackrel{{\rm def}}{=}\inf\{t\in [0,1]\,:\, \|\vordd^{\nu,\prime}(t,\cdot)\|_{H^r}\geq R\}\quad \text{ where }\quad \inf\emptyset\stackrel{{\rm def}}{=}1.
$$
If $
e^\prime=1$, then $\vord^{\nu}=\vordd^{\nu,\prime}$ by uniqueness of solutions to \eqref{eq:deterministic_NS_proof}. If $
e^\prime<1$, then still using the uniqueness of solutions to \eqref{eq:deterministic_NS_proof} we get $\vord^{\nu}=\vord^{\nu,\prime}$ on $[0,e^\prime]$. As $\sup_{t\in [0,e^\prime]}\|\vord^{\nu}(t)\|_{H^r}\leq 
\sup_{t\in [0,1]}\|\vord^{\nu}(t)\|_{H^{r_0}}\leq K_0$ and $R>K_0$ by construction, it follows that 
$\|\vordd^{\nu,\prime} (e^\prime,\cdot)\|_{H^{r}}<R$  
which contradicts the definition of $e^\prime $ and $\vordd^{\nu,\prime} \in C([0,1];H^r)$. This proves $
e^\prime=1$ and hence uniqueness of \eqref{eq:vdet_proof} follows from the one of solutions to \eqref{eq:deterministic_NS_proof}.

Now, by Proposition \ref{prop:scaling_NS} applied with $R=K_0+1$, $(p,r,r_0)$ as above and $\theta^n$ as in \eqref{eq:choice_thetan}, there exists $\theta^{\nu}\in \ell^2$ with the required properties such that 
$$
\P\Big(\sup_{t\in [0,1]}\|\vorc^{\nu}-\vord^{\nu}\|_{H^{r}} \leq \varepsilon\Big)>1-\varepsilon
$$
where $\vorc^{\nu}$ is the unique global $p$-solution to \eqref{eq:NS_2d_cutoff} with the above choice of $(R,r)$.
Set 
$\displaystyle{
\O_0\stackrel{{\rm def}}{=}\big\{\sup_{t\in [0,1]}\|\vorc^{\nu}-\vord^{\nu}\|_{H^{r}}\leq \varepsilon\big\}.}$
Now, to conclude, it remains to prove that
\begin{equation}
\label{eq:v_u_nu_equivalent_claim}
\vor^{\nu}(t)=\vorc^{\nu}(t) \text{ for all $t\in [0,1]$ a.s.\ on }\O_0.
\end{equation}
To see \eqref{eq:v_u_nu_equivalent_claim}, define the following stopping time 
$$
\tau_0\stackrel{{\rm def}}{=}
\inf\Big\{t\in [0,1]\,:\, \sup_{t\in [0,1]}\|\vorc^{\nu}(t)\|_{H^r}\geq K_0+1\Big\}.
$$ 
and $\inf\emptyset \stackrel{{\rm def}}{=}1$. 
Note that, by \eqref{eq:a_priori_vd_proof} and $\varepsilon<1$, we have 
\begin{equation}
\label{eq:lambda_1_on_big_set}
\tau_0=1 \ \ \text{ on }\O_0.
\end{equation} 
Moreover, $\phi_{R,r}(\vorc^{\nu})=1$ on $[0,\tau_0]$. Hence, $(\vorc^{\nu},\tau_0)$ is a local solution to the 2D NSEs  \eqref{eq:NS_2d_vorticity}. By uniqueness and maximality of the global solution $\vor^{\nu}$ to \eqref{eq:NS_2d_vorticity} (see the comments below Definition \ref{def:sol_NSE}), we obtain 
\begin{equation}
\label{eq:uniqueness_uv_nu_2dNS}
\vor^{\nu}(t)=\vorc^{\nu}(t)  \ \text{ a.s.\ for all $t\in [0,\tau_0]$.}
\end{equation}
Hence, the claim \eqref{eq:v_u_nu_equivalent_claim} follows from 
\eqref{eq:lambda_1_on_big_set} and \eqref{eq:uniqueness_uv_nu_2dNS}.
\end{proof}

\begin{proof}[Proof of Theorem \ref{t:anomalous_diss_NS}]
Due to Corollary \ref{cor:strong_convergence_theta}, the proof of Theorem \ref{t:anomalous_diss_NS} now follows almost verbatim from that of Theorem \ref{t:anomalous_diss_diff} given at the end of Subsection \ref{ss:proofs_diffusive}. 
\end{proof}

Similar to Proposition \ref{prop:anomalous_diss_diff2}, from Corollary \ref{cor:strong_convergence_theta} one can deduce the following variant of Theorem \ref{t:anomalous_diss_NS}.

\begin{proposition}[Anomalous dissipation of enstrophy by transport noise -- 2D NSEs II]
\label{prop:anomalous_diss_NS2}
Let $N\geq 1$ and $\delta>0$. For all $\mu>0$ and $\eta\in (0,1-e^{-\mu/(4\pi^2)})$, there exists a family $(\theta^{\nu})_{\nu\in (0,1)}\subseteq \Set$ such that, for all mean-zero $\vor_0\in H^{\delta}(\T^2)$ satisfying $N^{-1}\leq \|\vor_0\|_{L^2}$ and $ \|\vor_0\|_{H^{\delta}}\leq N$, it holds that 
$$
\inf_{\nu\in ( 0,1)} \E\int_0^1 \int_{\T^d}  \nu \,|\nabla \vor^{\nu}|^2\,\dd x \,\dd t \geq \frac{\eta}{2}\,\|\vor_0\|_{L^2(\T^d)}^2,
$$
where $\vor^\nu$ is the unique global smooth solution to \eqref{eq:NS_2d_vorticity}.
\end{proposition}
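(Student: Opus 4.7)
The plan is to imitate, step by step, the proof of Proposition \ref{prop:anomalous_diss_diff2}, replacing the scaling-limit input for passive scalars (Corollary \ref{cor:scaling_limit_diffusive}) by its 2D NSE counterpart Corollary \ref{cor:strong_convergence_theta}. First I would revisit Step 1 in the proof of Theorem \ref{t:anomalous_diss_diff} and show that, with only the renamings $\varrho^\g\rightsquigarrow\vor^\nu$, $\rhod^\g\rightsquigarrow\vord^\nu$ and $(\g,d)\rightsquigarrow(\nu,2)$, one obtains the following variant for the 2D NSEs: for each $\mu>0$ and each pair $\nu,\varepsilon\in(0,1)$, there exists $\theta^\nu_\varepsilon\in\Set$ such that the unique global solution to \eqref{eq:NS_2d_vorticity} satisfies
\begin{equation*}
2\,\E\int_0^1\int_{\T^2}\nu\,|\nabla\vor^\nu|^2\,\dd x\,\dd t \,\geq\, \bigl(1-\varepsilon-e^{-\mu/(4\pi^2)}\bigr)\|\vor_0\|_{L^2}^2 - 2\varepsilon\|\vor_0\|_{L^2}.
\end{equation*}

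The four ingredients of Step 1 carry over unchanged: (i) Corollary \ref{cor:strong_convergence_theta} supplies $\theta^\nu_\varepsilon\in\Set$ guaranteeing $\P(\sup_{t\in[0,1]}\|\vor^\nu-\vord^\nu\|_{L^2}\leq\varepsilon)>1-\varepsilon$, where $\vord^\nu$ solves the deterministic 2D NSE \eqref{eq:deterministic_NS_proof} with viscosity $\nu+\mu$; (ii) introducing the stopping time $\tau\stackrel{{\rm def}}{=}\inf\{t\in[0,1]:\|\vor^\nu(t)-\vord^\nu(t)\|_{L^2}\geq\varepsilon\}$ (with $\inf\emptyset=1$), the stochastic energy balance \eqref{eq:energy_balance_vor} expresses the expected dissipation as $\|\vor_0\|_{L^2}^2-\E\|\vor^\nu(1)\|_{L^2}^2$, which I would split over $\{\tau<1\}\cup\{\tau\geq 1\}$; (iii) on $\{\tau<1\}$ the a.s.\ bound $\|\vor^\nu(1)\|_{L^2}\leq\|\vor_0\|_{L^2}$ given by \eqref{eq:energy_balance_vor} costs at most $\varepsilon\|\vor_0\|_{L^2}^2$; (iv) on $\{\tau\geq 1\}$ I would use the deterministic decay $\|\vord^\nu(1)\|_{L^2}^2\leq e^{-\mu/(4\pi^2)}\|\vor_0\|_{L^2}^2$, which follows from the $L^2$-energy identity for \eqref{eq:deterministic_NS_proof} together with the mean-zero assumption on $\vor_0$ and the Poincar\'e inequality on $\T^2$. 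Here it is crucial that the convective term $(\K\vord^\nu\cdot\nabla)\vord^\nu$ is transparent in this energy estimate, since $\K\vord^\nu$ is divergence-free. The triangle inequality then controls $\|\vor^\nu(1)\|_{L^2}^2$ on $\{\tau\geq 1\}$ exactly as in \eqref{eq:integrated_energy_inequality_varrho_proof_g2}.

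To conclude, I would combine the displayed inequality with the extra hypothesis $\|\vor_0\|_{L^2}\geq N^{-1}$, which upgrades the linear defect to a quadratic one via $\|\vor_0\|_{L^2}\leq N\|\vor_0\|_{L^2}^2$, yielding
\begin{equation*}
2\,\E\int_0^1\int_{\T^2}\nu\,|\nabla\vor^\nu|^2\,\dd x\,\dd t \,\geq\, \bigl[(1-\varepsilon-e^{-\mu/(4\pi^2)})-2N\varepsilon\bigr]\|\vor_0\|_{L^2}^2.
\end{equation*}
Since $\eta<1-e^{-\mu/(4\pi^2)}$, a sufficiently small $\varepsilon_\eta=\varepsilon_\eta(N,\mu,\eta)$ makes the bracket at least $\eta$, and setting $\theta^\nu\stackrel{{\rm def}}{=}\theta^\nu_{\varepsilon_\eta}$ produces the required family $(\theta^\nu)_{\nu\in(0,1)}\subseteq\Set$. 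I do not anticipate any substantive obstacle: the hard analytic work is already packaged into Corollary \ref{cor:strong_convergence_theta}, and the remaining argument is the same bookkeeping used in Proposition \ref{prop:anomalous_diss_diff2}. The one point deserving a moment's care is that the choice $\theta^\nu_\varepsilon$ supplied by Corollary \ref{cor:strong_convergence_theta} works \emph{uniformly} over all $\vor_0$ with $\|\vor_0\|_{H^\delta}\leq N$; this uniformity is explicit in the statement of that corollary.
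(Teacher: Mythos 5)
Your proposal is correct and follows exactly the route the paper intends: the paper gives no separate proof of Proposition \ref{prop:anomalous_diss_NS2} but states that it follows from Corollary \ref{cor:strong_convergence_theta} by the argument of Proposition \ref{prop:anomalous_diss_diff2}, which is precisely your stopping-time/energy-balance bookkeeping combined with the lower bound $\|\vor_0\|_{L^2}\geq N^{-1}$. Your remark on the uniformity of $\theta^\nu_\varepsilon$ over the ball $\{\|\vor_0\|_{H^\delta}\leq N\}$ correctly identifies the one point where care is needed, and it is indeed supplied by the statement of Corollary \ref{cor:strong_convergence_theta}.
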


\section{Anomalous dissipation for 2D NSEs in velocity formulation}
\label{s:2D_NSE_velocity}
In this section, we study the anomalous dissipation of energy of the 2D NSEs with transport noise and in velocity formulation:
\begin{equation}
\label{eq:NS_2d_velocity}
\left\{
\begin{aligned}
\partial_t u^{\nu} +(u^{\nu}\cdot\nabla) u^{\nu} 
&=-\nabla p^{\nu} +\nu \Delta u^{\nu} \\
&+ \sqrt{2\mu }\sum_{k\in \Z^2_0}  \big[-\nabla \wt{p}_n^{\,\nu} +\theta_k^{\nu}	\,
(\sigma_k \cdot\nabla) u^{\nu} \big]\circ \dot{W}_t^k & \text{on }&\T^2,\\
\nabla \cdot u^{\nu}&=0& \text{on }&\T^2,
\\
u^{\nu}(0,\cdot)&=u_0& \text{on }&\T^2.
\end{aligned}
\right.
\end{equation}
where $(W^k)_{k\in \Z^2_0}$ is are complex-valued Brownian motions as in Subsection \ref{ss:structure_noise}. 
In Remark \ref{r:3D_NSE} below, we also discuss the three-dimensional case.
Let us point out that taking the curl operator $(u_x,u_y)\mapsto \partial_xu_y-\partial_y u_x$ in \eqref{eq:NS_2d_velocity} does \emph{not} give the vorticity formulation \eqref{eq:NS_2d_vorticity} (see \cite[Remark 2.1]{L23_enhanced} for details). Therefore, the results in this section are not related to Theorem \ref{t:anomalous_diss_NS} besides the proof strategy.  
The stochastic NSEs \eqref{eq:NS_2d_velocity} can be derived by mimicking the two-scale decomposition as in Subsection \ref{ss:heuristics_viscosity_dependent}. 
The reader is referred to \cite{DP22_two_scale,MR01,MR04} for more rigorous physical derivations of \eqref{eq:NS_2d_velocity}.

As usual, to analyse \eqref{eq:NS_2d_velocity}, we employ the Helmholtz projection $\p$ and its complement projection $\q$. For an $\R^d$-valued distribution $f=(f^i)_{i=1}^{d}\in \D'(\T^d;\R^d)$ on $\T^d$, let $\widehat{\,f^i\,}(k) =\langle e_k ,f^i\rangle $ be $k$-th Fourier coefficients, where $k\in \Z^d$, $i\in \{1,\dots,d\}$ and $e_k(x)=e^{2\pi \i k\cdot x}$. The Helmholtz projection $\p f$ for $f\in \D'(\T^d;\R^d)$ is given by
$$
(\widehat{ \p f})^i (k)\stackrel{{\rm def}}{=}\widehat{\,f^i\,}(k)- \sum_{1\leq j\leq d} \frac{k_i k_j }{|k|^2} \widehat{\,f^j\,}(k), \qquad 
(\widehat{ \p f})^i (0)\stackrel{{\rm def}}{=}\widehat{\,f^i\,}(0).
$$
Formally, $\p f$ can be written as $f- \nabla \Delta^{-1}(\nabla \cdot f)$. We set 
$
\q\stackrel{{\rm def}}{=} \mathrm{Id}- \p.
$  
From standard Fourier analysis, it follows that $\q$ and $\p$ restrict to bounded linear operators on $H^{s,q}$ and $B^s_{q,p}$ for $s\in \R$ and $q,p\in (1,\infty)$.
Finally, we can introduce function spaces of divergence-free vector fields: For $\mathcal{A}\in \{L^q,H^{s,q},B^{s}_{q,p}\}$, we set
\begin{align*}
\|f\|_{\mathbb{A}(\T^d)}\stackrel{{\rm def}}{=} \p (\mathcal{A}(\T^d;\R^d)),
\end{align*}
endowed with the natural norm. $\ell^2$-valued function spaces are defined analogously.

By using the Helmholtz projection to the first in \eqref{eq:NS_2d_velocity}, the 2D NSEs \eqref{eq:NS_2d_velocity} are (formally) equivalent to
\begin{align}
\label{eq:NS_2d_velocity_projection}
\partial_t u^{\nu} +\p[(u^{\nu}\cdot\nabla) u^{\nu} ]
=\nu \Delta u^{\nu} + \sqrt{2\mu }\sum_{k\in \Z^2_0} \theta_k^{\nu}	\, \p\big[
(\sigma_k \cdot\nabla) u^{\nu} \big]\circ \dot{W}_t^k 
\end{align}
with initial data $u^{\nu}(0,\cdot)=u_0$, both on $\T^2$.
Following the arguments in Subsection \ref{ss:structure_noise}, we reformulate the Stratonovich noise in \eqref{eq:NS_2d_velocity_projection} as an It\^o term plus a correction term.
As in \cite[Section 2]{FL19}, by \eqref{eq:ellipticity_noise}, it holds that 
\begin{equation}
\begin{aligned}
\label{eq:Ito_stratonovich_stokes_change}
\sqrt{2 \ellip}\sum_{k\in \Z^2_0}\theta_k\,\p [ (\sigma_{k}\cdot \nabla) u]\circ \dot{W}_t^{k}
=
\ellip \Delta u+Q_{\theta} (u) +  \sqrt{2 \ellip}\sum_{k\in \Z^2_0}\theta_k \,\p[ (\sigma_{k}\cdot \nabla) u]\,  \dot{W}_t^{k}
\end{aligned}
\end{equation}
where 
\begin{equation}
\label{eq:Ito_correction_Q}
Q_{\theta} (u)\stackrel{{\rm def}}{=}-
2\mu \sum_{k\in \Z^2_0}\theta_k^2\, \p \big[(\sigma_{-k}\cdot \nabla) \q [(\sigma_k \cdot \nabla) u]\big].
\end{equation}
Further comments on the transformation \eqref{eq:Ito_stratonovich_stokes_change} can be found in \cite[Section 1]{AV21_NS}. Arguing as in \eqref{eq:equivalence_complex_real_noise}, we can reformulate the noise terms in \eqref{eq:Ito_stratonovich_stokes_change} and \eqref{eq:Ito_correction_Q} using the real-valued coefficients $\Re \sigma_k$ and $\Im \sigma_k$. 
Finally, solutions to \eqref{eq:NS_2d_velocity} in the form \eqref{eq:NS_2d_velocity_projection}-\eqref{eq:Ito_correction_Q} can be defined as in Definition \ref{def:sol_NSE} by interpreting the convective term $(u^\nu\cdot\nabla)u^\nu$ in the conservative form $\nabla \cdot (u^\nu\otimes u^\nu)$, and replacing the usual Lebesgue and Sobolev spaces by the Lebesgue and Sobolev spaces of divergence-free vector fields, i.e., $\Hs^1(\T^2)$ and $\Ls^2(\T^2)$, respectively. We omit the details for brevity.
For completeness, we mention that the existence of a unique global solution to \eqref{eq:NS_2d_velocity} with $u_0\in \Ls^2(\T^2)$ with finite energy (i.e., $u\in L^2(0,T;\Hs^1(\T^2))\cap C([0,T];\Ls^2(\T^2))$ for all $T<\infty$) is well-known and can be, for instance, obtained via either \cite[Theorem 3.4]{AV24_variational} or \cite[Theorem 7.10]{agresti2025nonlinear}.

\smallskip

The main result of this section reads as follows.

\begin{theorem}[Anomalous dissipation of energy by transport noise -- 2D NSEs]
\label{t:anomalous_diss_NS_velocity}
Let $N\geq 1$ and $\delta>0$ be fixed. Then the following assertions hold.
\begin{itemize}
\item
For all $\mu>0$, there exists a family $(\theta^{\nu})_{\nu\in (0,1)}\subseteq \Set$ such that, for all mean-zero $u_0\in \Hs^{\delta}(\T^2)$ satisfying $ \|u_0\|_{H^{\delta}}\leq N$, 
we have
$$
\liminf_{\nu\downarrow 0} \E\int_0^1 \int_{\T^2}  \nu \,|\nabla u^{\nu}|^2\,\dd x \,\dd t \geq \frac{1}{2} (1-e^{-\mu/(16\pi^2)})\, \|\vor_0\|_{L^2(\T^2)}^2.
$$
\item For all $\mu>0$ and $\eta\in (0,1-e^{-\mu/(16\pi^2)})$, there exists a family $(\theta^{\nu})_{\nu\in (0,1)}\subseteq \Set$ such that, for all mean-zero $u_0\in \Hs^{\delta}(\T^2)$ satisfying $N^{-1}\leq \|u_0\|_{L^2}$ and $ \|u_0\|_{H^{\delta}}\leq N$, it holds that 
$$
\inf_{\nu\in ( 0,1)} \E\int_0^1 \int_{\T^d}  \nu \,|\nabla u^{\nu}|^2\,\dd x \,\dd t \geq \frac{\eta}{2}\,\|u_0\|_{L^2(\T^d)}^2.
$$
\end{itemize}
In the above statements, $u^\nu$ is the unique global smooth solution to \eqref{eq:NS_2d_velocity}.
\end{theorem}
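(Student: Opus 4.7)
The plan is to mirror the proof of Theorem~\ref{t:anomalous_diss_NS} in Section~\ref{s:anomalous_NS}, adapted to the velocity formulation. Two substantive modifications arise: the Helmholtz projection $\p$ in the transport noise produces the non-trivial It\^o correction $Q_\theta$ of~\eqref{eq:Ito_correction_Q}, which must be absorbed into the leading linear drift rather than cancelled against the noise as in the vorticity/passive-scalar case; and $\Ls^2(\T^2)$ is critical for the nonlinearity in velocity form, so the $\Hs^r$-cut-off with $r>0$ introduced in Subsection~\ref{ss:scaling_NS_2D} remains essential. The first step is to define, in parallel to~\eqref{eq:NS_2d_cutoff} and Definition~\ref{def:sol_NSE_cut_off}, a cut-off version of \eqref{eq:NS_2d_velocity_projection}--\eqref{eq:Ito_correction_Q} by multiplying the nonlinearity $\p[\nabla\cdot(\uc^\nu\otimes\uc^\nu)]$ by $\phi_{R,r}(\uc^\nu)=\phi(R^{-1}\|\uc^\nu\|_{\Hs^r})$, and to introduce the corresponding notion of $p$-solutions. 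The subcritical estimate~\eqref{eq:sub_criticality_L2} carries over verbatim with $[\K\vor]\,\vor$ replaced by $v\otimes v$ for $v\in\Hs^1$, so the local well-posedness and global extension argument of Lemma~\ref{prop:uniform_theta_estimate_NSE} adapts line by line.

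The second step is a uniform-in-$\theta$ Meyers estimate analogous to Lemma~\ref{prop:uniform_theta_estimate_NSE}. The new element is the operator $Q_\theta$, which has indefinite sign in the deterministic sense but is compensated exactly by the quadratic variation of the It\^o noise. A direct computation based on~\eqref{eq:ellipticity_noise} and $\p+\q=\mathrm{Id}$ gives the algebraic identity
\begin{equation*}
2\,\langle Q_\theta(u),u\rangle_{\Ls^2}+4\mu\sum_{k\in\Z^2_0}\theta_k^2\|\p[(\sigma_k\cdot\nabla)u]\|^2_{L^2}=2\mu\|\nabla u\|^2_{L^2}\qquad\text{for all }u\in\Hs^1,
\end{equation*}
which ensures that the stochastic energy balance contains only $\nu\|\nabla u\|^2_{L^2}$ of dissipation. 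Together with the bound $\|(\theta_k\sigma_{k,\alpha})_{k,\alpha}\|_{L^\infty(\T^2;\ell^2)}\leq C(\mu)$ coming from~\eqref{eq:ellipticity_noise}, this yields the parabolicity condition in the spirit of~\eqref{eq:parabolicity_linear} with constants depending only on $(\nu,\mu)$. The vectorial, divergence-free version of Theorem~\ref{t:Meyers_parabolic}, obtained by the same argument as in the scalar case by replacing the Laplacian by the Stokes operator and the noise by its projected counterpart, then yields an integrability exponent $p_0(\nu,\mu)>2$ independent of $\theta$.

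The core step is a scaling limit analogous to Proposition~\ref{prop:scaling_NS}: for any normalized radially symmetric $(\theta^n)\subseteq\Set$ with $\|\theta^n\|_{\ell^\infty}\to 0$, the unique global $p$-solutions $\ucn^\nu$ to the cut-off velocity equation converge in probability in $C([0,1];\Hs^{r_0})$, for some $r_0\in(0,1-2/p)$, to the unique solution $\udd^\nu$ on $[0,1]$ of the deterministic effective equation
\begin{equation*}
\partial_t \udd^\nu+\phi_{R,r}(\udd^\nu)\,\p[(\udd^\nu\cdot\nabla)\udd^\nu]=\bigl(\nu+\tfrac{\mu}{4}\bigr)\Delta\udd^\nu,\qquad \udd^\nu(0,\cdot)=u_0.
\end{equation*}
Identifying the effective extra viscosity as $\mu/4$, rather than the $\mu$ of the vorticity case (and matching the factor $16\pi^2=4\cdot 4\pi^2$ in the statement), amounts to computing the limit of $\mu\Delta+Q_{\theta^n}$ on divergence-free vector fields. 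A Fourier calculation based on~\eqref{eq:ellipticity_noise}, the radial symmetry of $\theta^n$, and the concentration of its support at arbitrarily high frequencies, cf.\ the analogous computations in~\cite{FGL21,FL19} for 2D NSE in velocity form, yields $(\mu\Delta+Q_{\theta^n})u\to(\mu/4)\Delta u$ in the appropriate weak sense for every smooth divergence-free $u$. Modulo this identification, the scaling limit follows the template of Proposition~\ref{prop:scaling_NS}: uniform Meyers estimates, the time-regularity bound of Lemma~\ref{l:time_regularity_estimate} (which still produces a $\|\theta^n\|_{\ell^\infty}$ prefactor for the stochastic-integral term), Ascoli-type compactness, and uniqueness of the limit equation.

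Finally, a $\nu$-uniform bound for the unique deterministic limit (without cut-off) in $C([0,1];\Hs^{r_0})$, analogous to Proposition~\ref{prop:Hdelta_estimates} and available from standard subcritical 2D NSE theory, enables the cut-off removal via the stopping-time argument of Corollary~\ref{cor:strong_convergence_theta}. Combining this with the energy balance $\tfrac{1}{2}\|u^\nu(t)\|^2_{L^2}+\nu\int_0^t\|\nabla u^\nu\|^2_{L^2}\,\dd s=\tfrac{1}{2}\|u_0\|^2_{L^2}$ a.s.\ (which holds because $\p[(u\cdot\nabla)u]\cdot u=0$ and because the It\^o correction $\mu\Delta u+Q_\theta(u)$ is exactly cancelled by the quadratic variation of the noise, as per the identity above) and with the Poincar\'e-type decay $\|\udd^\nu(1)\|^2_{L^2}\leq e^{-\mu/(16\pi^2)}\|u_0\|^2_{L^2}$ coming from the effective viscosity $\nu+\mu/4\geq \mu/4$, yields both assertions of the theorem via the stopping-time/splitting arguments used in Theorems~\ref{t:anomalous_diss_diff}--\ref{t:anomalous_diss_NS} and Proposition~\ref{prop:anomalous_diss_NS2}. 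The main obstacle is the Fourier computation identifying $(\mu\Delta+Q_{\theta^n})u\to(\mu/4)\Delta u$: this is the only place where the velocity formulation genuinely differs from the vorticity one; everything else is careful bookkeeping in the machinery of Sections~\ref{s:diffusive_proofs}--\ref{s:anomalous_NS}.
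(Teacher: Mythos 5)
Your proposal is correct and follows essentially the same route as the paper: introduce the cut-off velocity equation, prove uniform-in-$\theta$ Meyers estimates for the turbulent Stokes system (the paper's Theorem \ref{t:Meyers_Stokes}, which absorbs $Q_\theta$ into the drift exactly as you describe), run the scaling limit to the effective equation with viscosity $\nu+\mu/4$ using the known asymptotics of $Q_{\theta^n}$ (the paper cites \cite[Theorem 3.1]{L23_enhanced} for \eqref{eq:asymptotic_behavior_Q}), and then remove the cut-off and conclude via the stopping-time/splitting arguments of Theorems \ref{t:anomalous_diss_diff}--\ref{t:anomalous_diss_NS}. Your compensation identity for $\langle Q_\theta(u),u\rangle$ and the net statement $(\mu\Delta+Q_{\theta^n})u\to(\mu/4)\Delta u$ are both correct and consistent with the paper's bookkeeping.
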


The appearence of the factor $\mu/(16\pi^2)$ in Theorem \ref{t:anomalous_diss_NS_velocity}, rather than $\mu/(4\pi^2)$ as in Theorem \ref{t:anomalous_diss_NS}, is due to the asymptotic behaviour of the It\^o-Stratonovich corrector $Q_\theta$ under the scaling limit which will be carried out in Proposition \ref{prop:scaling_NS_velocity} below.

Before going into the proof of Theorem \ref{t:anomalous_diss_NS_velocity}, let us discuss its physical interpretation. 
In the context of the KB theory of 2D turbulence \cite{B69_2D_spectrum,R67_2D_turbulence,Frisch95}, anomalous dissipation of energy does \emph{not} hold. Thus, the result of Theorem \ref{t:anomalous_diss_NS_velocity} is not physically relevant.
Let us emphasize that this does not conflict with the physical interests of Theorem \ref{t:anomalous_diss_NS} as the latter is concerned with the \emph{stochastic} vorticity formulation, which cannot be obtained from the velocity one, see the comments below \eqref{eq:NS_2d_velocity}. 
Arguing as in Subsection \ref{ss:heuristics_viscosity_dependent}, we conjecture that this is due to the incorrect scaling for $\supp\,\theta^{\nu}$ in Theorem \ref{t:anomalous_diss_NS_velocity}. Physically speaking, following the two-scale terminology of Subsection \ref{ss:heuristics_viscosity_dependent}, the former corresponds to an overestimation of the `excitement' of the small scales. 
Thus, we conjecture that $\lim_{\nu\downarrow 0} \nu^{1/2-\varepsilon} N^{\nu}=\infty$ for all $\varepsilon>0$ where $N^{\nu}$ satisfies $\supp\, \theta^\nu \subseteq \{k\in \Z^2_0\,:\, |k|\eqsim N^{\nu}\}$ and $(\theta^\nu)_{\nu\in (0,1)}$ is such that anomalous dissipation of energy for \eqref{eq:NS_2d_velocity} holds.

\begin{remark}[Anomalous dissipation for the 3D NSEs]
\label{r:3D_NSE}
A version of Theorem \ref{t:anomalous_diss_NS} also holds for 3D NSEs in velocity formulation, provided, at a \emph{fixed} viscosity $\nu>0$, well-posedness and a-priori bounds for the 3D NSEs hold in the sub-critical space $\Ls^r(\T^3)$ with $r>3$. We refrain from formulating the latter assumption, as it implies the regularization by noise for 3D NSEs, which is one of the major open problems in stochastic fluid dynamics. The reader is referred to \cite{A24_hyper} for a recent result.
\end{remark}

As for the 2D NSE in vorticity formulation \eqref{eq:NS_2d_vorticity}, the proof of Theorem \ref{t:anomalous_diss_NS_velocity} is analogous to that of Theorem \ref{t:anomalous_diss_NS} and Proposition \ref{prop:anomalous_diss_diff2}. As for the vorticity formulation \eqref{eq:NS_2d_vorticity} analyzed in Section \ref{s:anomalous_NS}, the key ingredients are the scaling limit as in Proposition \ref{prop:scaling_NS} and the $H^{\delta}$-estimate of Proposition \ref{prop:Hdelta_estimates}.

To extend the Proposition \ref{prop:scaling_NS} to the stochastic 2D NSEs in velocity form \eqref{eq:NS_2d_velocity}, we consider the following truncated SPDE:
\begin{equation}
\label{eq:NS_2d_cutoff_velocity}
\left\{
\begin{aligned}
\partial_t \uc^{\nu} &+\phi_{R,r}(\uc^{\nu})\,[ ( \uc^{\nu}\cdot \nabla) \uc^{\nu} ]
=\nu \Delta \uc^{\nu} \\
&\qquad \qquad\qquad + \sqrt{2\mu }\sum_{k\in \Z^2_0} \theta_k \,(\sigma_k \cdot\nabla) \uc^{\nu} \circ \dot{W}_t^k & \text{ on }&\T^2,\\
\uc^{\nu}(0,\cdot)&=u_0& \text{ on }&\T^2,
\end{aligned}
\right.
\end{equation}
where $\theta=(\theta_k)_{k\in \Z^2_0}\in \ell^2$,
$
\phi_{R,r}(\uc^{\nu})\stackrel{{\rm def}}{=}\phi\big(R^{-1}\|\uc^{\nu}\|_{H^{r}}\big)
$ and $\phi\in C_{{\rm }}^{\infty}([0,\infty))$ is a cutoff function such that  $\supp\,\phi\subseteq [0,2]$ and $\phi=1$ on $[0,1]$.

\smallskip

Next, we prove an analogue of Proposition \ref{prop:scaling_NS} for the 2D NSEs with transport noise in the velocity formulation \eqref{eq:NS_2d_velocity}.

\begin{proposition}[Scaling limit -- 2D NSEs in velocity form with cut-off]
\label{prop:scaling_NS_velocity}
Fix $\nu\in (0,1)$ and $\mu>0$. Let $(\theta^n)_{n\geq 1}\subseteq \Set$ be as in \eqref{eq:choice_thetan}. 
Then there exists $p_0(\nu,\mu)>2$ for which the following assertion holds. 
If for $p\in (2,p_0]$, $r\in (0,1-2/p)$ and $R>0$ we have  
\begin{enumerate}[{\rm(1)}]
\item $u_0\in \Bs^{1-2/p}_{2,p}$;
\item\label{it:scaling_NS_2} there exists a unique solution $u^{\nu}\in C([0,1];\Hs^r)\cap L^2(0,1;\Hs^1)$ on $[0,1]$ to 
\begin{equation}
\label{eq:vdet_proof_velocity}
\left\{
\begin{aligned}
\partial_t \udd^{\nu} +\phi_{R,r}(\udd^\nu)\,[\nabla \pdd^\nu +(\udd^{\nu}\cdot\nabla)\udd^{\nu}]
&= \Big(\nu+\frac{\mu}{4}\Big)\Delta \udd^{\nu}&\text{on }&\T^2,\\
\udd^{\nu}(0,\cdot)&=u_0& \text{on }&\T^2;
\end{aligned}
\right.
\end{equation}
\end{enumerate}
then for all $r_0\in (0,1-2/p)$ and all sequences $(u_{0,n})\subseteq \Bs^{1-2/p}_{2,p}$ such that $u_{0,n}\to u_0$ in $\Bs^{1-2/p}_{2,p}$ it holds that 
\begin{equation*}
\lim_{n\to \infty} \P\Big(\sup_{t\in [0,1]}\|\ucn^{\nu}(t)-\udd^{\nu}(t)\|_{H^{r_0}}\geq \varepsilon \Big)=0\ \text{ for all } \varepsilon>0,
\end{equation*}
where $\ucn^{\nu}$ is the unique global $p$-solution to \eqref{eq:NS_2d_cutoff_velocity} with $\theta=\theta^n$ (whose definition is analogous to the one in Definition \ref{def:sol_NSE_cut_off}) and initial data $u_{0,n}$.
\end{proposition}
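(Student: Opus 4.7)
The plan is to mirror the proof of Proposition \ref{prop:scaling_NS}, with the only substantial new feature being the asymptotic control of the It\^o--Stratonovich corrector $Q_{\theta^n}$ from \eqref{eq:Ito_correction_Q}. First I would rewrite \eqref{eq:NS_2d_cutoff_velocity} in It\^o form via \eqref{eq:Ito_stratonovich_stokes_change}--\eqref{eq:Ito_correction_Q}, so that the linear leading part reads $(\nu+\mu)\Delta+Q_{\theta}$. The corrector $Q_\theta$ is a bounded second-order operator whose coefficients remain uniformly bounded in the class of normalized radially symmetric $\theta$ (since $\|\theta\|_{\ell^2}=1$), while the Helmholtz projection $\p$ is a Fourier multiplier of order zero. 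Hence the hypotheses of the stochastic Meyers inequality of Theorem \ref{t:Meyers_parabolic} hold with constants uniform in $\theta$, after verifying that the projected noise still yields uniform parabolicity; this is a minor adaptation of the computation in the proof of Lemma \ref{l:uniform_estimates_diffusive} using \eqref{eq:ellipticity_noise}.

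With the uniform-in-$\theta$ Meyers estimate in hand, I would reproduce the argument of Lemma \ref{prop:uniform_theta_estimate_NSE} to obtain the analogue of \eqref{eq:estimate_uniform_theta_NS} for $\uc^\nu$, using the sub-criticality bound \eqref{eq:sub_criticality_L2}, which (as noted just below that display) applies verbatim to the velocity-form nonlinearity $\nabla\cdot(v\otimes v)$. This yields global existence of the unique $p$-solution to \eqref{eq:NS_2d_cutoff_velocity} together with uniform bounds in $C([0,1];\Bs^{1-2/p}_{2,p})\cap L^p(0,1;\Hs^1)$. Combined with the energy equality for \eqref{eq:NS_2d_cutoff_velocity} and a time-regularity estimate identical to Lemma \ref{l:time_regularity_estimate} (in which $\|\theta^n\|_{\ell^\infty}$ controls the martingale part), the sequence $(\ucn^\nu)_{n\geq 1}$ is tight in $C([0,1];\Hs^{r_0})$ intersected with the bounded-energy set, by Prokhorov's theorem and the compact embedding argument of Step 1 in the proof of Proposition \ref{prop:scaling_diff}.

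The new step is the identification of the limit, and in particular the origin of the factor $\mu/4$ in \eqref{eq:vdet_proof_velocity}, which comes from the asymptotic behaviour of $Q_{\theta^n}$ as $\|\theta^n\|_{\ell^\infty}\to 0$. Working in Fourier space, the symbol of $Q_\theta$ acting on the mode $\ell$ of a divergence-free $u$ equals, after symmetrisation,
$$
-2\mu\sum_{k\in \Z^2_0}\theta_k^2\,(a_k\cdot \ell)^2\, \p_{\ell}\Big[\frac{(k+\ell)\otimes(k+\ell)}{|k+\ell|^2}\Big]\,\widehat u(\ell),
$$
where $\p_\ell$ is the projection onto $\ell^\perp$. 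For the sequence \eqref{eq:choice_thetan} the Fourier support of $\theta^n$ escapes to infinity, so $(k+\ell)/|k+\ell|\to \hat k$; the radial symmetry of $\theta^n$ combined with $\|\theta^n\|_{\ell^2}=1$ reduces the sum to an explicit average on $\mathbb{S}^1$, yielding $Q_{\theta^n}(u)\to -\tfrac{3\mu}{4}\Delta u$ when tested against smooth divergence-free functions. Combining with the $\mu\Delta$ from \eqref{eq:Ito_stratonovich_stokes_change} recovers the effective diffusion $\tfrac{\mu}{4}\Delta$ of \eqref{eq:vdet_proof_velocity}. A martingale-problem passage to the limit as in Step 2 of the proof of Proposition \ref{prop:scaling_diff}, together with the uniqueness hypothesis (2) and the Portmanteau theorem, identifies the unique limit point as $\delta_{\udd^\nu}$ and concludes.

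The main obstacle is the third paragraph: making the homogenization of $Q_{\theta^n}$ quantitative enough to commute with the test against the (random) limit, despite the nonlocality of $\q$. Concretely, one must decompose $Q_{\theta^n}(\ucn^\nu)$ into its deterministic limiting part $-\tfrac{3\mu}{4}\Delta \ucn^\nu$ plus a remainder that vanishes along $(\theta^n)_{n\ge 1}$ when tested against smooth divergence-free functions; the uniform $H^{r_0}$-bound from the Meyers estimate and the $|k|\ge N^n\to\infty$ support of $\theta^n$ are precisely what is needed to compensate the $\q$-induced Fourier shifts. All remaining steps are routine modifications of Sections \ref{s:diffusive_proofs}--\ref{s:anomalous_NS}.
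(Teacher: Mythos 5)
Your overall route is the paper's: uniform-in-$\theta$ Meyers estimates for the cut-off equation, the sub-critical bound \eqref{eq:sub_criticality_L2} for $\p[\nabla\cdot(u\otimes u)]$, tightness plus a martingale/identification argument as in Proposition \ref{prop:scaling_diff}, and the asymptotics of the corrector $Q_{\theta^n}$ to produce the effective viscosity $\nu+\mu/4$. Your Fourier computation of the limit of $Q_{\theta^n}$ is correct, and in fact your sign, $Q_{\theta^n}(u)\to-\tfrac34\mu\Delta u$, is the one consistent with \eqref{eq:Ito_correction_Q} and with the conclusion $(\nu+\mu/4)\Delta$ (the display \eqref{eq:asymptotic_behavior_Q} in the paper carries the opposite sign). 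The one step you undersell is the Meyers estimate itself: you propose to apply Theorem \ref{t:Meyers_parabolic} ``after verifying that the projected noise still yields uniform parabolicity'' as a minor adaptation, but the velocity system has noise $\p[(\xi_n\cdot\nabla)v]$ and the nonlocal second-order corrector $Q_\theta$ in the drift, neither of which is a small or lower-order perturbation of the scalar parabolic setting, so Theorem \ref{t:Meyers_parabolic} does not apply even after tweaking constants. What makes the argument work is the exact algebraic cancellation $\|\p[(\xi_n\cdot\nabla)v]\|_{\ell^2}^2=\|(\xi_n\cdot\nabla)v\|_{\ell^2}^2-\|\q[(\xi_n\cdot\nabla)v]\|_{\ell^2}^2$ against the quadratic form defining $Q_\xi$ in the energy identity, and this is precisely the content of the separate Theorem \ref{t:Meyers_Stokes} for the turbulent Stokes system, which is the result you should invoke here; with that substitution your proof coincides with the paper's.
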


There are two minor differences between Proposition \ref{prop:scaling_NS_velocity} and Proposition \ref{prop:scaling_NS}. First, the sequence chosen for the scaling limit, $(\theta^n)_{n\geq 1} \in \Set$, is more restricted. Second, the limiting deterministic PDE \eqref{eq:vdet_proof_velocity} features a reduced enhanced dissipation coefficient, namely $\mu/4$. Both differences stem from the asymptotic behavior of $Q_{\theta^n}$ as $n \to \infty$, cf.\ \eqref{eq:asymptotic_behavior_Q} below.

\begin{proof}
The proof of Proposition \ref{prop:scaling_NS_velocity} closely follows that of Proposition \ref{prop:scaling_NS}. We therefore comment only on the key modifications. Firstly, the nonlinear term $\p[\nabla \cdot (u^\nu \otimes u^\nu)]$ admits estimates analogous to those used in the 2D NSEs with transport noise in vorticity formulation \eqref{eq:NS_2d_vorticity}. For instance, parallel to \eqref{eq:sub_criticality_L2}, for all $u\in \Hs^1(\T^2)$, for all $r>0$, it holds that 
$$
\|\p[\nabla \cdot(u\otimes u)]\|_{H^{-1}}\lesssim \|u\|_{L^4}^2 \lesssim \|u\|_{H^{r}}^{1+\kappa_r}
\|u\|_{H^{1}}^{1-\kappa_r}
$$
where $\kappa_r=\frac{r}{1-r}>0$. Similarly, the estimate \eqref{eq:interpolation_inequality_proof_p_solutions_cutoff} also holds for the nonlinearity in the velocity form. Hence, from the previous facts, one can prove an analogue of Lemma \ref{prop:uniform_theta_estimate_NSE}, where in corresponding Step 2, one employs the Meyers estimates for the turbulent Stokes system of Theorem \ref{t:Meyers_Stokes} instead of Theorem \ref{t:Meyers_parabolic}.

Second, the factor $\frac{\mu}{4}$ follows by repeating Step 2 of Proposition \ref{prop:scaling_diff} and using that, for all divergence-free vector field $\phi\in C^{\infty}(\T^d;\R^d)$ and $(\theta_n)_{n\geq 1}$ as in \eqref{eq:choice_thetan} it holds that 
\begin{equation}
\label{eq:asymptotic_behavior_Q}
\lim_{n\to \infty} Q_{\theta^n} (\varphi)= \frac{3}{4}\mu\, \Delta \varphi \  \text{ in }\ H^{-\delta}(\T^d;\R^d) \ \text{ for all }\delta>0;
\end{equation}
see \cite[Theorem 3.1]{L23_enhanced}, and also \cite[Theorem 5.1]{FL19} for the three-dimensional case.

The remainder of the proof follows by repeating the arguments in Proposition \ref{prop:scaling_NS} with minimal modification.
\end{proof}

Finally, similar to Proposition \ref{prop:Hdelta_estimates}, the following result holds.

\begin{proposition}[$H^{\delta}$-estimates -- 2D NSEs velocity formulation]
\label{prop:Hdelta_estimates_velocity}
Let $\kappa>0$, $\delta\in (0,1]$ and let $u$ be unique global solution to the 2D NSEs in velocity form on $\T^2$, i.e.,
\begin{equation}
\label{eq:2D_NS_appendix}
\partial_t u +(u\cdot\nabla) u=-\nabla p+\kappa\Delta u,\quad \ \nabla \cdot u=0, \quad\  u(0)=u_0\in \Hs^{\delta}(\T^2).
\end{equation}
Then there exists a non-decreasing mapping $N:[0,\infty)\to [1,\infty)$ such that
\begin{equation}
\label{eq:claimed_estimates_Hdelta}
\sup_{t\in [0,\infty)}\|u(t,\cdot) \|_{H^{\delta}}+
\Big\|\,u-\int_{\T^2}u_0(x)\,\dd x\,\Big\|_{L^2(\R_+;H^{1+\delta})} \leq N(\|u_0\|_{L^2})\|u_0\|_{H^{\delta}}.
\end{equation}
\end{proposition}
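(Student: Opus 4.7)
The plan is to reduce to mean-zero initial data by a Galilean change of frame, then close an $H^\delta$-energy estimate using the classical $L^2$-energy bound together with the subcritical product estimate that underpins the 2D NSE theory.

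Since $\nabla\cdot u = 0$, both $\int_{\T^2}\p[(u\cdot\nabla)u]\,\dd x$ and $\int_{\T^2}\Delta u\,\dd x$ vanish, so the mean $\bar{u}_0\stackrel{{\rm def}}{=}\int_{\T^2}u_0\,\dd x$ is conserved by the flow. Setting $v(t,x)\stackrel{{\rm def}}{=}u(t,x+t\bar{u}_0)-\bar{u}_0$ produces a mean-zero divergence-free solution of the same 2D NSE with initial datum $u_0-\bar{u}_0$, whose $L^2$- and $H^\delta$-norms are dominated by those of $u_0$. It thus suffices to prove \eqref{eq:claimed_estimates_Hdelta} under the extra assumption that $u_0$ has mean zero, in which case the integral term on the LHS reduces to $\|u\|_{L^2(\R_+;H^{1+\delta})}$. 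Testing with $u$ yields the classical global identity
\begin{equation*}
\tfrac{1}{2}\|u(t)\|_{L^2}^2+\kappa\int_0^t\|\nabla u(s)\|_{L^2}^2\,\dd s=\tfrac{1}{2}\|u_0\|_{L^2}^2,
\end{equation*}
so $\|u\|_{L^\infty(\R_+;L^2)}\leq\|u_0\|_{L^2}$ and $\|u\|_{L^2(\R_+;H^1)}^2\leq(2\kappa)^{-1}\|u_0\|_{L^2}^2$.

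For the higher regularity, I would test the projected equation $\partial_t u-\kappa\p\Delta u=-\p\nabla\cdot(u\otimes u)$ against $(-\Delta)^\delta u$. Using $\nabla\cdot u=0$ and the $H^{-1+\delta}$--$H^{1-\delta}$ duality on the nonlinearity gives
\begin{equation*}
\tfrac{1}{2}\tfrac{\dd}{\dd t}\|u\|_{H^\delta}^2+\kappa\|u\|_{H^{1+\delta}}^2\leq \|\p\nabla\cdot(u\otimes u)\|_{H^{-1+\delta}}\,\|u\|_{H^{1+\delta}}.
\end{equation*}
The velocity analogue of the subcritical estimate \eqref{eq:sub_criticality_L2} — obtained from $\nabla\cdot:H^\delta\to H^{-1+\delta}$, Hölder's inequality, the Ladyzhenskaya bound $\|f\|_{L^4}^2\lesssim\|f\|_{L^2}\|\nabla f\|_{L^2}$, and interpolation between $H^\delta$ and $H^{1+\delta}$ — yields
\begin{equation*}
\|\p\nabla\cdot(u\otimes u)\|_{H^{-1+\delta}}\lesssim\|u\|_{L^2}^{1/2}\|u\|_{H^1}^{1/2}\|u\|_{H^\delta}^{1/2}\|u\|_{H^{1+\delta}}^{1/2}.
\end{equation*}
Young's inequality with conjugate exponents $4/3$ and $4$ then absorbs the resulting factor $\|u\|_{H^{1+\delta}}^{3/2}$ into $\tfrac{\kappa}{2}\|u\|_{H^{1+\delta}}^2$, leaving
\begin{equation*}
\tfrac{\dd}{\dd t}\|u(t)\|_{H^\delta}^2+\kappa\|u(t)\|_{H^{1+\delta}}^2\leq C_\kappa\|u(t)\|_{L^2}^2\|u(t)\|_{H^1}^2\|u(t)\|_{H^\delta}^2.
\end{equation*}

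Grönwall's lemma and the global integrability $\int_0^\infty\|u(s)\|_{L^2}^2\|u(s)\|_{H^1}^2\,\dd s\leq\|u_0\|_{L^2}^4/(2\kappa)$ from the energy identity now close the estimate, giving
\begin{equation*}
\sup_{t\geq 0}\|u(t)\|_{H^\delta}^2\leq\|u_0\|_{H^\delta}^2\exp\!\big(C_\kappa\|u_0\|_{L^2}^4/\kappa\big),
\end{equation*}
and integrating the differential inequality in $t$ provides the matching bound on $\|u\|_{L^2(\R_+;H^{1+\delta})}^2$. Both estimates take the required form $N(\|u_0\|_{L^2})\,\|u_0\|_{H^\delta}$ with $N$ non-decreasing. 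The main technical point is the subcritical exponent $1/2<1$ on $\|u\|_{H^{1+\delta}}$ in the product estimate: this 2D gap is precisely what forces the Young-absorbed remainder to take the form of the $L^1(\R_+)$-weight $\|u\|_{L^2}^2\|u\|_{H^1}^2$ controlled by the energy identity, thereby upgrading a local-in-time Grönwall bound into the time-uniform estimate \eqref{eq:claimed_estimates_Hdelta}.
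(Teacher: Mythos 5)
Your argument is correct, but it is a genuinely different route from the one the paper indicates. The paper does not give a self-contained proof: it invokes the nonlinear interpolation scheme of Gallagher--Planchon together with the identification $H^{\delta}_{\mf}=(L^2_{\mf},H^1_{\mf})_{\delta,2}$, i.e.\ one interpolates the (nonlinear) data-to-solution map between the Leray endpoint $\delta=0$ and the $H^1$ endpoint $\delta=1$, which yields all intermediate $\delta$ at once but requires the machinery for interpolating nonlinear maps. You instead run a direct $\dot H^{\delta}$-energy estimate for each fixed $\delta$, closed by the subcritical product bound $\|u\otimes u\|_{H^{\delta}}\lesssim \|u\|_{L^2}^{1/2}\|u\|_{H^1}^{1/2}\|u\|_{H^{\delta}}^{1/2}\|u\|_{H^{1+\delta}}^{1/2}$ (Kato--Ponce plus Ladyzhenskaya and $H^{\delta+1/2}\embed W^{\delta,4}$), Young's inequality, and Gr\"onwall with the $L^1(\R_+)$-weight $\|u\|_{L^2}^2\|u\|_{H^1}^2$ furnished by the energy identity; this is more elementary and has the bonus of producing the explicit constant $N(s)=\exp(C_\kappa s^4)$. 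Two small points you should make explicit: (i) after the Galilean reduction you recover $\sup_t\|u(t)\|_{H^\delta}\le\sup_t\|v(t)\|_{H^\delta}+|\bar u_0|$ with $|\bar u_0|\le\|u_0\|_{L^2}\le\|u_0\|_{H^\delta}$, so the additive mean is harmlessly absorbed into $N\ge 1$; and (ii) the bound $\|u\|_{L^2(\R_+;H^1)}^2\lesssim_\kappa\|u_0\|_{L^2}^2$ uses the Poincar\'e inequality for the mean-zero reduced solution, since the energy identity only controls $\int_0^\infty\|\nabla u\|_{L^2}^2\,\dd s$. Neither affects the validity of the proof.
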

 
 The above is well-known to experts and can be obtained by using an interpolation result, as in \cite{GP02}, and the fact that 
 $H^{\delta}_{\mf}=(L_{\mf}^2,H_\mf^1)_{\delta,2}$ where $(\cdot,\cdot)_{\delta,2}$ is the real interpolation functor (see e.g., \cite{BeLo} for details) and the subscript $\mf$ stands for mean-zero.
The case $\delta>1$ of Proposition \ref{prop:Hdelta_estimates_velocity} also holds and it follows from the case $\delta<1$ and the sub-criticality of $H^{\delta}$ with $\delta>0$ for \eqref{eq:2D_NS_appendix}. 

\begin{proof}[Proof of Theorem \ref{t:anomalous_diss_NS_velocity}]
By Propositions \ref{prop:scaling_NS_velocity} and \ref{prop:Hdelta_estimates_velocity}, the conclusion of Corollary \ref{cor:strong_convergence_theta} remains valid when $\vor^\nu,\vordd^\nu$ and $\vord^{\nu}$ are replaced by $u^\nu,\udd^\nu$ and $\ud^\nu$, respectively; where $\ud^\nu$ is the unique global solution to \eqref{eq:2D_NS_appendix} with $\kappa=\nu+\mu/4$.
Consequently, the assertions of Theorem \ref{t:anomalous_diss_NS_velocity} can be established using the same arguments as in the proofs of Theorem \ref{t:anomalous_diss_diff} and Proposition \ref{prop:anomalous_diss_diff2}.
\end{proof}

\appendix

\section{Stochastic Meyers' estimates}
\label{app:Meyers}
In this appendix, we prove maximal $L^p_{t}(L_x^q)$-regularity estimates for parabolic SPDEs under only boundedness, measurability, and parabolicity assumptions with $p$ and $q$ close to $2$. In the deterministic case, such estimates have been first proven by Meyers' \cite{Meyers63} via a perturbation argument. 
In the stochastic setting, such estimates have been proven in  
\cite{VB23_extrapolation} where, due to the abstract setting, no extrapolation in space is given, i.e., $q=2$.
Here, we provide direct proof of the stochastic Meyers' estimates following Meyers' original idea, which, interestingly, 
also provides an improvement in the integrability in space.
For simplicity, we mainly consider SPDEs on $\T^d$. However, extensions to domains and systems of parabolic SPDEs are possible. 

\smallskip

Below, $(W^n)_{n\geq 1}$ denotes a family of independent standard Brownian motions on a filtered probability space $(\O,\mathscr{A},(\F_t)_{t\geq0},\P)$ and $\E\stackrel{{\rm def}}{=}\int_{\O}\cdot\,\dd \P$. We emphasize that these Brownian motions may differ from those introduced in Subsection \ref{ss:structure_noise}.

\subsection{Parabolic stochastic Meyers' estimates}
In this subsection, we consider the following parabolic SPDEs
\begin{equation}
\label{eq:parabolic_linear}
\left\{
\begin{aligned}
\partial_t v& =\kappa \Delta  v+f  + \sum_{n\geq 1} \big[(\xi_n\cdot\nabla) v+g_n\big]\,\dot{W}_t^n 
 &\text{ on }&\T^d,\\
v(0,\cdot)&=0 &\text{ on }&\T^d,
\end{aligned}
\right.
\end{equation}
where $v$ is the unknown process and $f$, $\xi_n$ and $g_n$ are specified below. 
In the following, for a stopping time $\tau:\O\to [0,\infty]$, we say that a progressively measurable process $v:[0,\tau)\times \O\to H^{1}(\T^d)$ is a \emph{strong solution} to \eqref{eq:parabolic_linear} (on $(0,\tau)$) if a.s.\ $v\in L^2_{{\rm loc}}([0,\tau);H^1(\T^d))\cap C([0,\tau);L^2(\T^d))$, and for all $t\in [0,\tau)$,
$$
v(t)=\int_{0}^t (\kappa \Delta  v(s)+f(s))\,\dd s + \sum_{n\geq 1} \int_0^t \big[(\xi_n\cdot\nabla) v(s)+g_n(s)\big]\,\dd{W}_s^n .
$$
Here, the above equality is understood in $H^{-1}(\T^d)$. Similar definitions are applied to the other SPDEs considered in this section.

The following complements \cite[Theorem 1.2]{AV_torus} in case of $L^{\infty}$-transport noise.

\begin{theorem}[Parabolic stochastic Meyers' estimates]
\label{t:Meyers_parabolic}
Let $\kappa>0$. Assume that 
$$
\xi_n=(\xi_j^n)_{j=1}^d:\R_+\times \O\times \T^d\to \R^d \text{ is $\Progress\otimes \Borel(\T^d)$-measurable 
for all $n\geq 1$,}
$$ 
and that there exist $M_0>0$ and $\kappa_0\in [0,\kappa)$ such that, a.e.\ on $\R_+\times \O\times\T^d$,
\begin{align}
\label{eq:boundedness_linear}
\|(\xi_n)_{n\geq 1}\|_{\ell^2}&\leq M_0,& &(\text{boundedness})\\
\label{eq:parabolicity_linear}
\frac{1}{2}\sum_{n\geq 1}\big( \xi_n\cdot \eta\big)^2  
&\leq \kappa_0 |\eta|^2\  \text{ for all } \eta\in \R^d. & &(\text{parabolicity})
\end{align}
Then there exists $p_0(d,\kappa,\kappa_0,M_0)>2$ such that, for all stopping times $\tau$ with values in $[0,T]$ with $T<\infty$, $p\in [2,p_0]$, $q\in [2,p]$ and progressively measurable processes $f$ and $g$ satisfying
$$
f\in L^p((0,\tau)\times \O;H^{-1,q}(\T^d)) \quad \text{ and }\quad g=(g_n)_{n\geq 1} \in L^p((0,\tau)\times \O;L^{q}(\T^d;\ell^2)),
$$
there exists a unique strong solution $v\in L^p((0,\tau)\times \O;H^{1,q}(\T^d))$ to \eqref{eq:parabolic_linear} and
\begin{align}
\label{eq:maximal_regularity_estimate_meyers_parabolic}
\|v\|_{L^p((0,\tau)\times\O;H^{1,q}(\T^d))}
\lesssim_{d,\kappa,\kappa_0,M_0,T,p} &\,\|f\|_{L^p((0,\tau)\times \O;H^{-1,q}(\T^d))}\\
\nonumber
+ &\,\|g\|_{L^p((0,\tau)\times \O;L^{q}(\T^d;\ell^2))}.
\end{align}
\end{theorem}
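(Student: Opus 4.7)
The plan is to follow Meyers' original perturbative strategy: apply stochastic maximal $L^p_t(L^q_x)$-regularity of the pure heat operator $\partial_t - \kappa \Delta$ to treat the transport term $\sum_n (\xi_n\cdot\nabla)v\,\dot{W}^n$ as a lower-order perturbation, and absorb it by the leading term. This closure is possible provided $(p,q)$ is sufficiently close to $(2,2)$, since the relevant operator norm of the stochastic heat convolution continuously degenerates to the sharp $L^2$-value dictated by the It\^o isometry, and the latter is compatible with the strict parabolicity gap $\kappa > \kappa_0$.

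The base case $(p,q)=(2,2)$ is standard. Applying It\^o's formula to $\|v(t)\|_{L^2}^2$, taking expectation, and using the pointwise bound $\sum_n |(\xi_n\cdot\nabla) v|^2 \leq 2\kappa_0 |\nabla v|^2$ from \eqref{eq:parabolicity_linear}, I obtain
\begin{equation*}
\E\|v(t)\|_{L^2}^2 + 2(\kappa-\kappa_0)\E\int_0^t \|\nabla v\|_{L^2}^2\,\dd s
\lesssim_{\kappa,\kappa_0} \E\int_0^t\!\big(\|f\|_{H^{-1}}^2 + \|g\|_{L^2(\ell^2)}^2\big)\,\dd s ,
\end{equation*}
via Young's inequality, yielding existence, uniqueness and \eqref{eq:maximal_regularity_estimate_meyers_parabolic} at $p=q=2$ (e.g.\ by a Galerkin or variational scheme, cf.\ \cite[Ch.\ 4]{LR15}). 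In particular, if $S_0$ denotes the solution map of the pure heat SPDE on $\T^d$ (i.e.\ $\xi_n\equiv 0$), the above computation gives the sharp bound $\|\nabla S_0(0,g)\|_{L^2_{t,\omega,x}}\leq (2\kappa)^{-1/2}\|g\|_{L^2_{t,\omega}(L^2_x(\ell^2))}$, reflecting the It\^o isometry combined with the parabolic smoothing of $e^{\kappa t \Delta}$.

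Next I would invoke stochastic maximal $L^p_t(L^q_x)$-regularity of the heat equation on $\T^d$ for some $(p_1,q_1)$ with $p_1,q_1>2$; see e.g.\ \cite{AV_torus,VB23_extrapolation}. Writing $C_{\mathrm{SMR}}(p,q)$ for the operator norm of $(f,g)\mapsto \nabla S_0(f,g)$ on the appropriate spaces, Riesz-Thorin interpolation between $(p_1,q_1)$ and $(2,2)$ gives continuity $C_{\mathrm{SMR}}(p,q)\to (2\kappa)^{-1/2}$ as $(p,q)\to(2,2)$ from above. I then choose $p_0\in (2,p_1]$ depending only on $(d,\kappa,\kappa_0,M_0)$ such that $\sqrt{2\kappa_0}\,C_{\mathrm{SMR}}(p,q)\leq \tfrac{1}{2}$ for all $p\in[2,p_0]$ and $q\in[2,p]$; the strict gap $\kappa_0<\kappa$ is precisely what makes such a choice possible.

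To conclude, I would rewrite \eqref{eq:parabolic_linear} as the fixed-point equation $v=S_0\big(f,(\xi_n\cdot\nabla)v+g_n\big)$ and note that \eqref{eq:boundedness_linear}-\eqref{eq:parabolicity_linear} yield $\|(\xi_n\cdot\nabla) v\|_{L^p(L^q(\ell^2))}\leq \sqrt{2\kappa_0}\|\nabla v\|_{L^p(L^q)}$ pointwise in $(t,\omega)$. Applying $C_{\mathrm{SMR}}(p,q)$ and absorbing the perturbative term to the left-hand side delivers \eqref{eq:maximal_regularity_estimate_meyers_parabolic}; existence and uniqueness follow from a Banach fixed-point / Neumann series argument with contraction constant $\leq 1/2$, and the restriction to the stochastic interval $[0,\tau]$ is routine via the standard $\one_{[0,\tau]}$ truncation (which also produces the $T$-dependence of the final constant). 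The main obstacle is precisely the continuity of $C_{\mathrm{SMR}}(p,q)$ at the endpoint $(2,2)$ and the identification of its limit with the sharp It\^o-isometric value $(2\kappa)^{-1/2}$; once this is in place—either by Riesz-Thorin interpolation of the underlying stochastic singular integral associated to the heat semigroup, or by a direct $L^q$-continuity argument—the perturbation scheme closes and the transport noise is genuinely lower-order for $(p,q)$ in a neighbourhood of $(2,2)$.
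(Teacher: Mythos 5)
Your proposal follows essentially the same route as the paper: compute the sharp constant for the unperturbed stochastic heat equation at $(p,q)=(2,2)$ from the It\^o energy identity, interpolate to show that the maximal-regularity constant is continuous at $(2,2)$, bound the transport term pointwise by $\sqrt{2\kappa_0}\,\|\nabla v\|_{L^q}$ using \eqref{eq:parabolicity_linear}, and absorb it for $(p,q)$ near $(2,2)$ thanks to $\kappa_0<\kappa$. The only structural difference is that you close the argument by a Banach fixed-point/Neumann series whereas the paper uses the method of continuity in a parameter $\lambda$ multiplying the noise; for a linear equation these are interchangeable, as the paper itself remarks. Two points need repair.

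First, a genuine gap: the theorem asserts $p_0=p_0(d,\kappa,\kappa_0,M_0)$, \emph{independent of $T$}. In your scheme $p_0$ is determined by where $\sqrt{2\kappa_0}\,C_{\mathrm{SMR}}(p,q)$ drops below $1$, and $C_{\mathrm{SMR}}(p,q)$ is obtained by interpolating the $T$-uniform $(2,2)$ energy bound against a maximal $L^{p_1}_t(L^{q_1}_x)$-regularity bound on $(0,T)$. Since $-\Delta$ on $\T^d$ is not invertible, that second endpoint constant is not uniform in $T$, hence neither is the interpolated constant for any fixed $p>2$, and the threshold $p_0$ produced by your argument degenerates as $T\to\infty$. (The $T$-dependence you attribute to the truncation $\one_{[0,\tau]}$ is allowed in the final constant, but it must not contaminate $p_0$.) The paper secures $T$-independence by first replacing $\Delta$ by $\Delta-1$ — removing the shift afterwards by a deterministic perturbation argument — and running the whole interpolation/absorption scheme on $\R_+$, where all endpoint constants are time-independent; see Remark \ref{rem:time_independence_p}. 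You need some version of this step, or else a separate argument that the gradient part of $C_{\mathrm{SMR}}(p,q)$ (the only part entering the absorption) is uniform in $T$.

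Second, a slip: you require $\sqrt{2\kappa_0}\,C_{\mathrm{SMR}}(p,q)\le \tfrac12$. At the endpoint $(2,2)$ this product equals $\sqrt{\kappa_0/\kappa}$, which is strictly less than $1$ but can be arbitrarily close to $1$; the gap $\kappa_0<\kappa$ only buys a contraction constant strictly below $1$, not $\tfrac12$. The absorption and the Neumann series still close with any constant $<1$, so this is cosmetic, but the claim as written is unachievable in general.
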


Some remarks are in order. Firstly, it is well-known that the condition \eqref{eq:parabolicity_linear} with $\kappa_0<\kappa$ is optimal in the parabolic regime. Secondly, by \cite[Proposition 3.8]{AV19_QSEE_1}, \cite[Theorem 1.2]{MaximalLpregularity} and the fact that the operator $v\mapsto -\Delta v$ on $H^{-1,q}(\T^d)$ with domain $H^{1,q}(\T^d)$ has a bounded $H^{\infty}$-calculus of angle $0$ (due to the periodic version of \cite[Theorem 10.2.25]{Analysis2}), it follows that in case $\tau\equiv T$ the LHS\eqref{eq:maximal_regularity_estimate_meyers_parabolic} can be replaced by 
\begin{align}
\label{eq:time_regularity_estimates_1}
&\|v\|_{L^p(\O;C([0,T];B^{1-2/p}_{q,p}(\T^d)))},  \ \ \text{and}\\
\label{eq:time_regularity_estimates_2}
&\|v\|_{L^p( \O;H^{\theta,p}(0,T;H^{1-2\theta,q}(\T^d)))}  \text{ for $\theta\in [0,\tfrac{1}{2})$ provided $p>2$}.
\end{align}
We emphasize that \eqref{eq:time_regularity_estimates_1} also holds in the case $p=2$ (and hence $q=2$). In the latter case, \eqref{eq:maximal_regularity_estimate_meyers_parabolic}-\eqref{eq:time_regularity_estimates_1} coincide with the usual $L^2$-estimate for parabolic SPDEs as $L^2(\T^d)=B^0_{2,2}(\T^d)$, see e.g., \cite[Chapter 4]{LR15}.

The reader is referred to \cite[Chapter 10]{Analysis2} and \cite[Subsection 2.2]{AV19_QSEE_1} for details on the $H^{\infty}$-calculus and Banach-valued fractional Sobolev spaces, respectively. 
The appearance of Besov spaces in \eqref{eq:time_regularity_estimates_1} is optimal in light of the trace method, see e.g.\ \cite[Section 3.12]{BeLo} and \cite[Theorem 1.2]{ALV23_trace}. Finally, by \cite[Proposition 3.10]{AV19_QSEE_1}, the estimates \eqref{eq:maximal_regularity_estimate_meyers_parabolic}-\eqref{eq:time_regularity_estimates_2} also hold with non-trivial initial data $v(\cdot,0)\in L^p_{\F_0}(\O;B^{1-2/p}_{q,p}(\T^d))$ provided on the RHS\eqref{eq:maximal_regularity_estimate_meyers_parabolic} one also add $\|v(\cdot,0)\|_{L^p(\O;B^{1-2/p}_{q,p}(\T^d))}$.

\begin{proof}
By a perturbation argument \cite[Theorem 3.2]{AV_torus}, it is enough to prove the claim with $T=\infty$ and with $\Delta v$ replaced by $\Delta v -v$. 
The advantage is that the latter operator is invertible. This allows us to work on the half-line $\R_+=(0,\infty)$ instead of intervals of finite length. Moreover, as will be evident from the proof below, this also gives the \emph{independence} of $p_0>2$ on $T$ in the maximal $L^p_t(L^q_x)$-regularity estimate for \eqref{eq:parabolic_linear}. Hence, below, we consider
\begin{equation}
\label{eq:parabolic_linear2}
\left\{
\begin{aligned}
\partial_t v& =\kappa (\Delta  v-v)  +f+ \sum_{n\geq 1}\big(\big[(\xi_n\cdot\nabla) v \big]+g_n\big)\,\dot{W}_t^n 
&\text{ on }&\T^d,\\
v(0,\cdot)&=0 &\text{ on }&\T^d.
\end{aligned}
\right.
\end{equation}
The main idea is to regard \eqref{eq:parabolic_linear2} as a perturbation of the case $\xi_n\equiv 0$ and show the smallness of the transport noise part as $(q,p)$ approaches $p=q=2$. To this end, we first analyze the constant in the maximal $L^p_t(L^q_x)$-regularity estimate when $\xi_n\equiv 0$, i.e., for the stochastic heat equation with additive noise.

\smallskip

\emph{Step 1: (Analysis of constants -- case $\xi_n\equiv 0$ and $f\equiv 0$)
For each $p>2$ there exists $K_p>0$ satisfying 
\begin{equation}
\label{eq:asymptotic_behaviour_parabolic}
\limsup_{p\downarrow 2}K_p< 1/\sqrt{2\kappa_0}
\end{equation}
such that, for all $q\in [2,p]$, all stopping times $\tau:\O\to [0,\infty]$, and all progressively measurable process $g\in L^p((0,\tau)\times \O;L^q(\ell^2))$, there exists a unique strong solution $v$ to \eqref{eq:parabolic_linear2} with $\xi_n\equiv 0$ and $f\equiv 0$ such that $v\in L^{p}((0,\tau)\times \O;H^{1,q})$ and}
\begin{equation}
\label{eq:a_priori_estimate_1_eq1}
\| v\|_{L^{p}((0,\tau)\times \O;H^{1,q})}\leq K_p\|g\|_{L^{p}((0,\tau)\times \O;L^{q}(\ell^2))}.
\end{equation}

We emphasize that the case $\tau=\infty$ a.s.\ is allowed. 
The existence of a strong solution $v$ to \eqref{eq:parabolic_linear2} with $\xi_n\equiv 0$ for which \eqref{eq:a_priori_estimate_1_eq1} holds follows \cite[Theorem 7.1]{NVW13} and the above mentioned boundedness of the  $H^{\infty}$-calculus and invertibility of the operator $v\mapsto -\Delta v+v$ on $H^{-1,q}$ with domain $H^{1,q}$. 
For $p\geq 2$ and $q\in [2,p]$, let $N_{q,p}$ be the optimal constant constant for which \eqref{eq:a_priori_estimate_1_eq1} holds. By complex interpolation (see, e.g., \cite[Theorem 2.2.6]{Analysis1}), it follows that 
\begin{equation}
\label{eq:asymptotic_behaviour_Rstarp_step1_parabolic0}
N_{p,q}\leq N_{p,2}^{1-\eta}N_{p,p}^{\eta} \  \text{ where $\eta\in (0,1)$ satisfies }\ \tfrac{1}{q}=\tfrac{1-\eta}{2}+\tfrac{\theta}{p}.
\end{equation} 
Recall that $\kappa_0<\kappa$. To prove the claim of Step 1, it remains to show that \eqref{eq:asymptotic_behaviour_parabolic} with $K_p=\sup_{2\leq q\leq p} N_{p,q}$. To this end, as $\kappa_0<\kappa$, it is enough to show that 
\begin{equation}
\label{eq:asymptotic_behaviour_Rstarp_step1_parabolic}
\lim_{p\downarrow 2} N_{p,2}=\lim_{p\downarrow 2} N_{p,p}= 1/\sqrt{2\kappa}.
\end{equation}
We prove only that $\lim_{p\downarrow 2} N_{p,p}= 1/\sqrt{2\kappa}$ as the other is similar. 
Again, by complex interpolation \cite[Theorem 2.2.6]{Analysis1}, it follows that, for all $p_0>p$,
\begin{equation*}
N_{p,p}\leq N_{2,2}^{1-\theta}N_{p_0,p_0}^\theta 
\  \text{ where $\theta\in (0,1)$ satisfies }\ \tfrac{1}{p}=\tfrac{1-\theta}{2}+\tfrac{\theta}{p_0}.
\end{equation*}
We claim that 
\begin{equation}
\label{eq:N22_constant_parabolic}
N_{2,2}\leq 1/\sqrt{2\kappa}
\end{equation}
It is clear that \eqref{eq:N22_constant_parabolic} yields second in \eqref{eq:asymptotic_behaviour_Rstarp_step1_parabolic} as $p\to 2$ implies $\theta\to 0$. 
Next, we prove \eqref{eq:N22_constant_parabolic}. To this end, note that from It\^o's formula to $v\mapsto \|v\|_{L^2}^2$ (see e.g.\ \cite[Theorem 4.2.5]{LR15}) and an integration by parts, it follows that, a.s.\ for all $t>0$,
$$
\|v(t)\|_{L^2}^2 + 2\kappa\int_0^t\int_{\T^d}(|v|^2+ |\nabla v|^2)\,\dd x \,\dd t 
= \int_0^t \int_{\T^d} \|g\|_{\ell^2}^2,\dd x \,\dd t +\mathcal{M}_t,
$$
where $\mathcal{M}_t$ is an $L^1(\O)$-martingale starting at $0$. Thus, taking expected values in the previous identity, discarding the quantity $\E\|v(t)\|_{L^2}^2$, and using that $\E\mathcal{M}_t=0$, we obtain the (quenched) energy inequality: 
\begin{align*}
 2\kappa\,\E\int_0^t\int_{\T^d}(|v|^2+ |\nabla v|^2)\,\dd x \,\dd t 
\leq \E\int_0^t \int_{\T^d} \|g\|_{\ell^2}^2\,\dd x \,\dd t \ \ \text{ for all }t>0.
\end{align*}
Hence, \eqref{eq:N22_constant_parabolic} follows from the above by taking $t\to \infty$.

\smallskip

\emph{Step 2: Conclusion}. 
To prove the claim of Theorem \ref{t:Meyers_parabolic}, we employ the method of continuity \cite[Proposition 3.13]{AV19_QSEE_2}. 
The method of continuity is a powerful tool in the theory of (S)PDEs (see, e.g., \cite{AV_torus} and \cite[p.\ 113--115]{TayPDE3}). In this context, one considers a family of problems that vary continuously with respect to a parameter $\lambda\in [0,1]$. The method allows one to deduce the existence and uniqueness of solutions at $\lambda=1$, provided these properties are known at $\lambda=0$, and a uniform \emph{a priori} estimate for solutions holds for all $\lambda\in [0,1]$ (here, uniform means that the constants are independent of $\lambda$).
In applications, the problem with $\lambda=0$ is typically simpler. In our case, it corresponds to equation \eqref{eq:parabolic_linear2} with $\xi_n\equiv 0$, as analyzed in Step 1. The original problem typically corresponds to $\lambda=1$. The key advantage of this approach is that, for $\lambda\in (0,1]$, one can assume \emph{a priori} the existence of a solution to the $\lambda$-problem.
In the context of linear (S)PDEs, such as those considered here, the existence and uniqueness of solutions can also be obtained a posteriori by leveraging the a priori estimate through a contraction principle (see \cite[Proposition 3.13]{AV19_QSEE_2}).

Coming back to the proof, from the above discussion, for $\lambda\in[0,1]$, we consider the SPDE:
\begin{equation}
\label{eq:parabolic_linear2_lambda}
\left\{
\begin{aligned}
\partial_t v_{\lambda}& =\kappa (\Delta  v_{\lambda}-v_{\lambda} )+f + \sum_{n\geq 1}\big(\lambda\,\big[(\xi_n\cdot\nabla) v_{\lambda} \big]+g_n\big)\,\dot{W}_t^n 
&\text{ on }&\T^d,\\
v_{\lambda}(0,\cdot)&=0 &\text{ on }&\T^d.
\end{aligned}
\right.
\end{equation}
Note that, for $\lambda=1$, the above reduces to \eqref{eq:parabolic_linear2}, while for $\lambda =0$ it reduces to the one considered in Step 1.
 
We claim that there exist constants $p_0>2$ and $C_0>0$ such that, for all $p\in[2,p_0]$, $q\in [2,p]$, $\lambda\in [0,1]$, stopping time $\tau:\O\to [0,\infty]$, progressively measurable processes $f\in L^p((0,\tau)\times \O;H^{-1,q})$ and  $g=(g_n)_{n\geq 1}\in L^p((0,\tau)\times \O;L^q(\ell^2))$, and \emph{any} strong solutions $v_{\lambda} $ to \eqref{eq:parabolic_linear2_lambda} in $L^p((0,\tau)\times \O;H^{1,q})$ and $\lambda\in[0,1]$, it holds that 
\begin{equation}
\label{eq:claimed_estimate_independent_lambda}
\|v_{\lambda}\|_{L^p((0,\tau)\times \O;H^{1,q})}
\leq C_0(
\|f\|_{L^p((0,\tau)\times \O;H^{-1,q})}
+
\|g\|_{L^p((0,\tau)\times \O;L^{q}(\ell^2))}).
\end{equation}
The key point is the independence of $C_0,p_0$ on $\lambda\in [0,1]$. As discussed above, the existence of such 
$v_\lambda$ is taken for granted, based on the method of continuity, which will be rigorously applied at the end of this step.

Let $\tau$ be a fixed stopping time. To prove \eqref{eq:claimed_estimate_independent_lambda}, we begin with a reduction to the case $f\equiv 0$. Therefore, let us assume that \eqref{eq:claimed_estimate_independent_lambda} holds with $f\equiv 0$. Since the operator $v\mapsto -\Delta v +v$ has a bounded $H^{\infty}$-calculus of angle $0$, it also has deterministic maximal $L^p_t(L^q_x)$-regularity by \cite[Theorem 4.4.5]{pruss2016moving}. Arguing as in \cite[Theorem 3.9]{VP18} (or by approximation), we can find a progressively measurable process $w\in L^p(\O;W^{1,p}(0,\tau;H^{-1,q})\cap L^p(0,\tau;H^{1,q}))$ which satisfies, a.s.\ and a.a.\ $t \in (0,\tau)$,
\begin{equation}
\label{eq:remove_f_g_meyers_parabolic}
\partial_t w (t,\cdot)=\kappa(\Delta w(t,\cdot)-w(t,\cdot)) + f(t,\cdot), \qquad w(0,\cdot)=0,
\end{equation}
on $\T^d$.
Since $\wt{v}_{\lambda}\stackrel{{\rm def}}{=} v_{\lambda}-w$ solves \eqref{eq:claimed_estimate_independent_lambda} with $f\equiv 0$ and $g_n$ replaced by $g_n + (\xi_n\cdot\nabla)w$, it is enough to prove \eqref{eq:claimed_estimate_independent_lambda} with $f\equiv 0$.

To prove \eqref{eq:claimed_estimate_independent_lambda} with $f\equiv 0$, note that, for all $v\in H^{1,q}$,
\begin{align*}
\|((\xi_n\cdot\nabla)v)_{n\geq 1}\|_{L^{q}(\ell^2)}
= \Big(\int_{\T^d} \Big( \sum_{n\geq 1}| (\xi_n\cdot \nabla) v |^2\Big)^{q/2}\,\dd x \Big)^{1/q}
\leq \sqrt{2\kappa_0}\, \|\nabla v\|_{L^q} ,
\end{align*}
where the last inequality follows by applying \eqref{eq:parabolicity_linear} pointwise and with $\eta=\nabla v(x)$.

Hence, in case $f\equiv 0$, for all $\lambda\in [0,1]$, $p\in [2,\infty]$ and $q\in [2,p]$,
\begin{align*}
\| v_{\lambda}\|_{L^{p}(\R_+\times\O;	H^{1,q})}
&\leq K_p \big(\|g\|_{L^p(\R_+\times\O;L^{q}(\ell^2))}
+
\|((\xi_n\cdot\nabla)v_\lambda)_{n\geq 1}\|_{L^p(\R_+\times \O;L^{q}(\ell^2))}
\big)\\
&\leq K_p \|g\|_{L^p(\R_+\times\O;L^{q}(\ell^2))}+ \sqrt{2\kappa_0}K_p \|v_{\lambda}\|_{L^{p}(\R_+\times\O;H^{1,q})}.
\end{align*}
where $K_p$ is the constant in \eqref{eq:a_priori_estimate_1_eq1} of Step 1. From the latter, there exists $p_0(\kappa_0,\kappa)>2$ such that $\sup_{2\leq p\leq p_0}K_p<1/\sqrt{2\kappa_0}$. Thus, the previous estimate yields, for all $2\leq p\leq p_0$ and $q\in [2,p]$,
$$
\|v_{\lambda}\|_{L^{p}(\R_+\times\O;H^{1,q})}\leq (1-\sqrt{2\kappa_0}K_p)^{-1}K_p\|g\|_{L^p(\R_+\times\O;L^{q}(\ell^2))}
$$
Hence, \eqref{eq:claimed_estimate_independent_lambda} with $f\equiv 0$ holds with a constant independent of $\lambda$ for $2\leq p\leq p_0$.

Now, we are in a position to apply the method of continuity \cite[Proposition 3.13]{AV19_QSEE_2}. Note that, from Step 1 and the reduction to the case $f\equiv 0$ as performed below \eqref{eq:claimed_estimate_independent_lambda}, we have existence and uniqueness for the SPDE \eqref{eq:parabolic_linear2_lambda} with $\lambda=0$ for solutions in the class $L^p((0,\tau)\times \O;H^{1,q})$, where $\tau$ is any stopping time with values in $[0,\infty]$. Thus, the conclusion of this step follows from the method of continuity in \cite[Proposition 3.13]{AV19_QSEE_2} and the a priori estimate \eqref{eq:claimed_estimate_independent_lambda} that is uniform in $\lambda\in [0,1]$.
\end{proof}

\begin{remark}
\label{rem:time_independence_p}
Below are some comments on the proof of Theorem \ref{t:Meyers_parabolic}.
\begin{itemize}
\item The additional dissipative term $-\kappa v$ in \eqref{eq:parabolic_linear2} is not essential for the proof but ensures that the constant $K_p$ in \eqref{eq:a_priori_estimate_1_eq1} remains independent of $T$. Without it, $K_p$ would depend on $T$, leading to a time-dependent $p_0$ in the second step and, consequently, to \eqref{eq:claimed_estimate_independent_lambda} holding only for $p \in [2, p_0]$, $q \in [2, p]$. Introducing $-\kappa v$ and working on the half-line avoids this artificial dependence, as anticipated at the beginning of the proof of Theorem \ref{t:Meyers_parabolic}.

\item Theorem \ref{t:Meyers_parabolic} also implies the existence of strong solutions to \eqref{eq:parabolic_linear2}, and estimate \eqref{eq:claimed_estimate_independent_lambda} even holds for $\tau = \infty$. This is due to the term $-\kappa v$, which—as previously noted—ensures that $p_0$ remains independent of the time interval. One can check that the estimate \eqref{eq:parabolic_linear} holds if the processes $f$ and $g_n$ have mean zero, as it restores invertibility of the Laplacian on $\T^d$. 
\end{itemize}
\end{remark}

\subsection{Stochastic Meyers' estimates for the turbulent Stokes system}
In this subsection, we prove the Meyers' estimates for the turbulent Stokes system on $\T^d$, i.e.,
\begin{equation}
\label{eq:stokes_turbulent}
\left\{
\begin{aligned}
\partial_t v& =\kappa \Delta  v +\qxi v+ f  + \sum_{n\geq 1}\big(\p\big[(\xi_n\cdot\nabla) v \big]+g_n\big)\,\dot{W}_t^n 
&\text{ on }&\T^d,\\
v(0,\cdot)&=0 &\text{ on }&\T^d,
\end{aligned}
\right.
\end{equation}
here $v$ is the unknown process, $f$, $\xi_n$ and $g_n$ are specified below. Finally, 
$\qxi: \Hs^{1}(\T^d) \to (\Hs^{1}(\T^d))^{*}$ is given by 
\begin{align*}
\langle w ,\qxi v\rangle \stackrel{{\rm def}}{=}
-\frac{1}{2}\sum_{n\geq 1}\int_{\T^d}  \q[(\xi_n\cdot\nabla)v]\cdot\q[(\xi_n\cdot\nabla) w]\,\dd x
\ \text{ for }\  w\in \Hs^{1}(\T^d).
\end{align*}
Here and below, we use the notation introduced at the beginning of Section \ref{s:2D_NSE_velocity}.
Although it holds that $(\Hs^{1})^{*}=\Hs^{-1}$, here we will not employ such identification as it is important to keep track of constants. Note that $\qxi$ corresponds to the It\^o-correction for the Stratonovich formulation of the transport noise for the turbulent Stokes system, see Section \ref{s:2D_NSE_velocity} and \cite[Section 1]{AV21_NS}.

Strong solutions to \eqref{eq:stokes_turbulent} can be defined similarly to those of \eqref{eq:parabolic_linear}.
The following complements \cite[Theorem 3.2]{AV21_NS} in case of $L^{\infty}$-transport noise.

\begin{theorem}[Meyers' estimates -- Turbulent Stokes system]
\label{t:Meyers_Stokes}
Let $\kappa>0$. Assume that 
$$
\xi_n=(\xi_j^n)_{j=1}^d:\R_+\times \O\times \T^d\to \R^d \text{ is $\Progress\otimes \Borel(\T^d)$-measurable for all $n\geq 1$},
$$ 
and that there exist $M_0>0$ and $\kappa_0\in [0,\kappa)$ such that, a.e.\ on $\R_+\times \O\times\T^d$,
\begin{align}
\label{eq:boundedness_stokes}
\|(\xi_n)_{n\geq 1}\|_{\ell^2}&\leq M_0 ,& &(\text{boundedness})\\
\label{eq:parabolicity_stokes}
\frac{1}{2}\sum_{n\geq 1}\big( \xi_n\cdot \eta\big)^2  
&\leq \kappa_0 |\eta|^2 \  \text{ for all } \eta\in \R^d.& &(\text{parabolicity})
\end{align}
Then there exists $p_0(d,\kappa,\kappa_0,M_0)>2$ such that, for all stopping times $\tau$ with values $[0,T]$ with $T<\infty$, $p\in [2,p_0]$, $q\in [2,p]$ and progressively measurable processes $f$ and $g$ satisfying
$$
f\in L^p((0,\tau)\times \O;\Hs^{-1,q}(\T^d)) \quad \text{ and }\quad g=(g_n)_{n\geq 1} \in L^p((0,\tau)\times \O;\Ls^{q}(\T^d;\ell^2)),
$$
there exists a unique strong solution $v\in L^p((0,\tau)\times \O;\Hs^{1,q}(\T^d))$ to \eqref{eq:stokes_turbulent} and
\begin{align}
\label{eq:maximal_regularity_estimate_meyers_stokes}
\|v\|_{L^p((0,\tau)\times \O;H^{1,q}(\T^d;\R^d))}
\lesssim_{d,\kappa,\kappa_0,M_0,T,p}
&\,\|f\|_{L^p((0,\tau)\times \O;H^{-1,q}(\T^d;\R^d))}\\
\nonumber
+&\,\|g\|_{L^p((0,\tau)\times \O;L^{q}(\T^d;\ell^2(\N;\R^d)))}.
\end{align}
\end{theorem}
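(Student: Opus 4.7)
The plan is to mirror the proof of Theorem \ref{t:Meyers_parabolic} in two steps, replacing the scalar parabolic setting with the divergence-free one and tracking the interplay between the transport noise, the It\^o correction, and the Stratonovich corrector $\qxi$. As in the parabolic case, I would first reduce to $T=\infty$ by adding a dissipative term $-\kappa v$ to \eqref{eq:stokes_turbulent} (so that the Stokes operator $v\mapsto -\Delta v +v$ on $\p(H^{-1,q}(\T^d;\R^d))$ becomes invertible), using the perturbation principle of \cite[Theorem 3.2]{AV_torus}. The crucial operator $v\mapsto -\Delta v + v$ has a bounded $H^{\infty}$-calculus of angle $0$ on $\Hs^{-1,q}(\T^d)$ (the Helmholtz projection commutes with $\Delta$, so this reduces to the scalar case on divergence-free Fourier modes), which yields stochastic maximal $L^p_t(L^q_x)$-regularity by \cite[Theorem 7.1]{NVW13}.

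For Step 1, I would analyze \eqref{eq:stokes_turbulent} with $\xi_n\equiv 0$ (so that $\qxi\equiv 0$ as well): this is exactly the additive-noise stochastic Stokes system, and complex interpolation together with the $L^2$-energy identity (here using that $\p$ is an orthogonal projection on $\Ls^2$) gives $N_{q,p}\leq N_{2,2}^{(1-\eta)}N_{p,p}^{\eta}$ and $N_{2,2}\leq 1/\sqrt{2\kappa}$, hence $\limsup_{p\downarrow 2}\sup_{2\leq q\leq p} N_{q,p}< 1/\sqrt{2\kappa_0}$. For Step 2, I would run the method of continuity with parameter $\lambda\in[0,1]$ on
\begin{equation*}
\partial_t v_\lambda = \kappa(\Delta v_\lambda - v_\lambda) + \lambda^2 \qxi v_\lambda + f + \sum_{n\geq 1}\big(\lambda\,\p[(\xi_n\cdot\nabla)v_\lambda] + g_n\big)\dot{W}_t^n,
\end{equation*}
reducing to $f\equiv 0$ via a deterministic Stokes solve as in \eqref{eq:remove_f_g_meyers_parabolic} (the drift is absorbed into $g_n$ through $g_n+\lambda(\xi_n\cdot\nabla)w$, and the corrector contribution $\lambda^2 \qxi w$ is absorbed into $f$ and then removed by a second Stokes solve). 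Boundedness of the Helmholtz projection on $L^q(\T^d;\ell^2)$, combined with the pointwise bound $\|((\xi_n\cdot\nabla)v)_{n\geq 1}\|_{\ell^2}\leq \sqrt{2\kappa_0}\,|\nabla v|$ from \eqref{eq:parabolicity_stokes}, yields
\begin{equation*}
\|\p[(\xi_n\cdot\nabla)v_\lambda]_{n\geq 1}\|_{L^p(\R_+\times\O;L^q(\ell^2))}\leq C_{q,d}\sqrt{2\kappa_0}\,\|v_\lambda\|_{L^p(\R_+\times\O;H^{1,q})},
\end{equation*}
with $C_{q,2}=1$, so $C_{q,d}\to 1$ as $q\downarrow 2$. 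Similarly, duality gives $\|\qxi v_\lambda\|_{H^{-1,q}}\lesssim \kappa_0\,\|v_\lambda\|_{H^{1,q}}$, and this prefactor can be kept small by further decreasing $p_0$. Combining these estimates with Step 1 as in the proof of Theorem \ref{t:Meyers_parabolic}, we obtain, for $2\leq p\leq p_0$ and $q\in[2,p]$,
\begin{equation*}
\|v_\lambda\|_{L^p(\R_+\times\O;H^{1,q})}\leq K_p\big(\sqrt{2\kappa_0}\,C_{q,d} + o(1)\big)\|v_\lambda\|_{L^p(\R_+\times\O;H^{1,q})} + K_p\|g\|_{L^p(\R_+\times\O;L^q(\ell^2))},
\end{equation*}
which can be absorbed since $\limsup_{p\downarrow 2}K_p\sqrt{2\kappa_0}<1$. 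The resulting uniform-in-$\lambda$ a priori estimate then closes the method of continuity via \cite[Proposition 3.13]{AV19_QSEE_2}.

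The main obstacle, and the place where this argument differs from the parabolic proof, is the treatment of the corrector $\qxi v_\lambda$: unlike the parabolic case where the transport noise is the only perturbation, here it must be shown that $\qxi$ can be absorbed into $f$ in the Stokes solve \emph{together} with the transport noise contribution without destroying the smallness at $(p,q)=(2,2)$. This is precisely guaranteed by the cancellation
\begin{equation*}
2\langle v,\qxi v\rangle + \sum_{n\geq 1}\|\p[(\xi_n\cdot\nabla)v]\|_{L^2}^2 = \sum_{n\geq 1}\|(\xi_n\cdot\nabla)v\|_{L^2}^2 - 2\sum_{n\geq 1}\|\q[(\xi_n\cdot\nabla)v]\|_{L^2}^2\leq 2\kappa_0\|\nabla v\|_{L^2}^2,
\end{equation*}
which reflects the fact that the Stratonovich-to-It\^o conversion together with $\qxi$ preserves the parabolicity gap $\kappa-\kappa_0>0$. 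The remaining routine issues (commuting the Helmholtz projection with the time-integrated Stokes solve, and boundedness of $\p$ on $L^q(\ell^2)$ with constant tending to $1$ as $q\downarrow 2$) are standard Calder\'on--Zygmund facts. Existence and uniqueness of strong solutions then follow from \cite[Proposition 3.13]{AV19_QSEE_2}, giving \eqref{eq:maximal_regularity_estimate_meyers_stokes}.
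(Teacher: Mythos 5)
Your overall architecture (reduction to the half-line via the extra dissipative term $-\kappa v$, an additive-noise base case whose constants interpolate down to the $L^2$ energy constant, and the method of continuity) matches the paper's, and you correctly identify the cancellation $2\langle v,\qxi v\rangle+\sum_{n}\|\p[(\xi_n\cdot\nabla)v]\|_{L^2}^2\leq 2\kappa_0\|\nabla v\|_{L^2}^2$ as the heart of the matter. However, the quantitative argument you actually run never exploits it, and it does not close for $\kappa_0$ close to $\kappa$. Two concrete problems. First, the claim that the prefactor in $\|\qxi v_\lambda\|_{(\Hs^{1,q'})^*}\lesssim \kappa_0\|v_\lambda\|_{H^{1,q}}$ ``can be kept small by further decreasing $p_0$'' is false: the operator norm of $\qxi$ is genuinely of order $\kappa_0$ for every $q\geq 2$, so its contribution to the absorption constant is not $o(1)$ as $p\downarrow 2$. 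Second, once you treat $\qxi v_\lambda$ and $\lambda\,\p[(\xi_n\cdot\nabla)v_\lambda]$ as two \emph{separate} perturbations of the additive-noise Stokes system, the drift perturbation enters through the deterministic maximal regularity constant (which tends to $1/\kappa$ as $p,q\downarrow 2$) and the noise perturbation through the stochastic one (which tends to $1/\sqrt{2\kappa}$), so the combined absorption coefficient tends to $\kappa_0/\kappa+\sqrt{\kappa_0/\kappa}$. This exceeds $1$ whenever $\kappa_0/\kappa>(3-\sqrt{5})/2$, so your argument only covers $\kappa_0$ bounded away from $\kappa$. Your choice of scaling the corrector by $\lambda^2$ in the continuity family aggravates this: at $\lambda=0$ the corrector is absent, so the base estimate cannot see the cancellation at all, while at $\lambda=1$ both perturbations act at full strength.

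The missing step is the paper's Substep 2a. There the continuity parameter multiplies only the transport noise while $\qxi$ is kept at full strength in the base problem; the It\^o formula for that base problem then yields the combined estimate \eqref{eq:a_priori_estimate_1}: the solution map $g\mapsto(\sqrt{\kappa-\kappa_0}\,v,\,\sqrt{\kappa-\kappa_0}\,\nabla v,\,(\p[(\xi_n\cdot\nabla)v])_{n\geq1})$ has norm at most $1$ at $p=q=2$ --- this is exactly where the cancellation is spent --- and by complex interpolation of this \emph{vector-valued} operator its $L^p(L^q)$ norm tends to $1$ as $p\downarrow 2$. Because the third output component is precisely the projected transport term, the perturbation $\lambda\,\p[(\xi_n\cdot\nabla)v_\lambda]$ added to $g$ is absorbed against a quantity already present on the left-hand side, and the small excess is controlled by $(\kappa-\kappa_0)\|v_\lambda\|_{H^{1,q}}^2$ via the crude bound \eqref{eq:boundedness_stokes}. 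You need to interpolate this combined quantity rather than the scalar estimates separately; with that replacement (and keeping $\qxi$ unscaled in the $\lambda$-family), your outline becomes the paper's proof.
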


The comments below Theorem \ref{t:Meyers_parabolic} extend to the above result. In particular, if $\tau=\infty$, then the LHS\eqref{eq:maximal_regularity_estimate_meyers_stokes} can be replaced by 
\begin{align*}
&\|v\|_{L^p(\O;C([0,T];\Bs^{1-2/p}_{q,p}(\T^d)))},  \ \ \text{and}\\
&\|v\|_{L^p( \O;H^{\theta,p}(0,T;\Hs^{1-2\theta,q}(\T^d)))}  \text{ for $\theta\in [0,\tfrac{1}{2})$ provided $p>2$},
\end{align*}
and the first of the above also holds with $p=2$ (and thus $q=2$).
Moreover, due to \cite[Proposition 3.10]{AV19_QSEE_1}, the above estimates also hold if $v(\cdot,0)\in L^p_{\F_0}(\O;\Bs^{1-2/p}_{q,p}(\T^d))$ provided on the RHS\eqref{eq:maximal_regularity_estimate_meyers_stokes} one also add $\|v(\cdot,0)\|_{L^p(\O;\Bs^{1-2/p}_{q,p}(\T^d))}$.

\begin{proof}
The proof is a variant of the one used in Theorem \ref{t:Meyers_parabolic}. 
In particular, we consider the following SPDE:
\begin{equation}
\label{eq:stokes_turbulent2}
\left\{
\begin{aligned}
\partial_t v& =\kappa (\Delta  v-v) +\qxi v+ f  + \sum_{n\geq 1}\big(\p\big[(\xi_n\cdot\nabla) v \big]+g_n\big)\,\dot{W}_t^n 
&\text{ on }&\T^d,\\
v(0,\cdot)&=0 &\text{ on }&\T^d,
\end{aligned}
\right.
\end{equation}
which is introduced for technical convenience and allows us to obtain a constant $p_0>2$ that is independent of the time horizon $T<\infty$, as stated in Theorem \ref{t:Meyers_Stokes}; see also Remark \ref{rem:time_independence_p}.
Thus, in the remaining part of the proof, we prove existence and the estimate \eqref{eq:maximal_regularity_estimate_meyers_stokes} for strong solutions to \eqref{eq:stokes_turbulent2}.

Since the deterministic part of the turbulent Stokes system \eqref{eq:stokes_turbulent2} (i.e., with $\xi_n\equiv 0$) is more intricate than that of \eqref{eq:parabolic_linear}, we apply the perturbation argument--via the method of continuity from Theorem \ref{t:Meyers_parabolic}--in two stages. In the first step, we establish maximal $L^p$-regularity for the deterministic component, which then enables a reduction to the case $f \equiv 0$ in \eqref{eq:stokes_turbulent2}.

\smallskip

\emph{Step 1: (Maximal $L^p$-regularity -- deterministic problem) There exist $p_1>2$ and constants $(K_{p})_{p\in [2,p_1]}$ such that, for all $p\in [2,p_1]$, $q\in [2,p]$, $T\in (0,\infty]$, and $f\in L^p(0,T;\Hs^{-1,q})$ the following Cauchy problem
\begin{equation}
\label{eq:stokes_step1_problem_w}
\partial_t w(t,\cdot) =\kappa (\Delta  w(t,\cdot)-w(t,\cdot))+\qxi w(t,\cdot)+ f(t,\cdot) ,\quad w(0,\cdot)=0,
\end{equation}
on $\T^d$,
has a unique strong solution $w\in L^p(0,T;\Hs^{1,q})$ satisfying} 
$$
\|w\|_{L^p(0,T;H^{1,q})}\leq K_p \|f\|_{L^p(0,T;H^{-1,q})}.
$$

The proof of Step 1 follows as the proof of Theorem \ref{t:Meyers_parabolic} regarding \eqref{eq:stokes_step1_problem_w} as a perturbation of the case $\xi_n\equiv 0$. For clarity, we divide the proof into two substeps.

\smallskip

\emph{Substep 1a: (Analysis of constants -- case $\xi_n\equiv 0$)
For each $p>2$ there exists $C_p>0$ satisfying $\displaystyle{\limsup_{p\downarrow 2} C_p< 1/\kappa_0}$ such that, for all $q\in [2,p]$, $T\in (0,\infty]$ and $f\in L^p(0,T;\Hs^{-1,q})$, there exists a unique strong solution $w\in L^p(0,T;\Hs^{1,q})$ to \eqref{eq:stokes_step1_problem_w} $\xi_n\equiv 0$ (and hence $Q_\xi\equiv 0$) and}
\begin{equation}
\label{eq:a_priori_estimate_stokes_1}
\| w\|_{L^p(0,T;H^{1,q})}\leq C_p \|f\|_{L^p(0,T;H^{-1,q})}.
\end{equation}
As in Step 1 of Theorem \ref{t:Meyers_parabolic}, the existence of such $w$ follows from the boundedness $H^{\infty}$-calculus of the operator $w\mapsto -\Delta w+w$ on $(\Hs^{1,q'})^*$ with domain $\Hs^{1,q}$. By repeating the argument in \eqref{eq:asymptotic_behaviour_Rstarp_step1_parabolic0}-\eqref{eq:N22_constant_parabolic} and $\kappa_0<\kappa$, it remains to show the validity of \eqref{eq:a_priori_estimate_stokes_1} with $p=q=2$ and $C_2=1/\kappa$. To prove the latter, note that, by computing $\frac{\dd }{\dd t }\|w\|_{L^2}^2$ we obtain 
\begin{align*}
\frac{1}{2}\|w(t)\|_{L^2}^2+\kappa \int_{0}^t \| w(s)\|_{H^1}^2 \,\dd s
&= \int_0^t \langle  w(s),f(s) \rangle \,\dd s\\
&\leq \frac{1}{2\kappa}\int_0^t \|f(s)\|_{H^{-1}}^2\,\dd s +\frac{\kappa}{2}\int_0^t \|w(s)\|^2_{H^{1}}\,\dd s
\end{align*}
for all $t\in \R_+$.
The above immediately yields  \eqref{eq:a_priori_estimate_stokes_1} with $p=q=2$ and $C_2=1/\kappa$.

\smallskip

\emph{Substep 1b: Proof of the claim of Step 1.}
We prove the claim of Step 1 by (the deterministic version of) the method of continuity \cite[Proposition 3.13]{AV19_QSEE_2} (the reader is referred to the text at the beginning of Step 2 in Theorem \ref{t:Meyers_parabolic} for some comments). 
Thus, for all $\lambda\in [0,1]$ consider, on $\T^d$,
\begin{equation}
\label{eq:stokes_step1_problem_w_lambda}
\partial_t w_{\lambda}=\kappa (\Delta  w_{\lambda}-w_{\lambda}) +\lambda \qxi w_{\lambda}+ f  ,\qquad w_{\lambda}(0,\cdot)=0.
\end{equation}
As in the proof of Step 2 of Theorem \ref{t:Meyers_parabolic},  from the method of continuity and Step 1 of the current proof, it remains to prove that any strong solution $w_{\lambda}$ to the above that lies in $L^p(0,T;H^{1.q})$ for some $T\in (0,\infty]$ satisfies an a priori estimate with constant independent of $\lambda\in [0,1]$. Firstly, by \eqref{eq:boundedness_stokes}, it follows that
\begin{equation}
\label{eq:qxi_lower_order}
\|\qxi w \|_{(\Hs^{1,q'})^*}\leq D_q \|w\|_{\Hs^{1,q}} \ \ \text{ for $q\in [2,\infty)$ and $w\in H^{1,q'}$.} 
\end{equation}
Secondly, we show that we can choose $D_q$ in \eqref{eq:qxi_lower_order} such that 
\begin{equation}
\label{eq:qxi_lower_order_constant}
\lim_{q \downarrow 2} D_{q}=\kappa_0.
\end{equation}
By interpolation (again, see \eqref{eq:asymptotic_behaviour_Rstarp_step1_parabolic0}-\eqref{eq:N22_constant_parabolic}), it is enough to prove that \eqref{eq:qxi_lower_order} holds with $C_2=\sqrt{2\kappa_0}$ whenever $q=2$. Note that, for all $w\in \Hs^{1}$,
\begin{align*}
\|\qxi w\|_{(\Hs^{1})^*}
&= \frac{1}{2}
\sup_{\phi\,\in \,\Hs^{1}\,:\, \|\phi\|_{H^{1}}\leq 1}\, 
\sum_{n\geq 1}\int_{\T^d}  \q[(\xi_n\cdot\nabla)w]\cdot\q[(\xi_n\cdot\nabla) \phi]\,\dd x
\\
&\stackrel{(i)}{\leq} \frac{1}{2} \sup_{\phi\,\in \, \Hs^{1}\,:\, \|\phi\|_{H^{1}}\leq 1}\,
\| ( \q[(\xi_n\cdot\nabla)w])_{n\geq 1}\|_{L^2(\ell^2)}
\| ( \q[(\xi_n\cdot\nabla)\phi])_{n\geq 1}\|_{L^2(\ell^2)} 
\\
&\stackrel{(ii)}{\leq} \frac{1}{2} \sup_{\phi\,\in \, \Hs^{1}\,:\, \|\phi\|_{H^{1}}\leq 1}\,
\| ( (\xi_n\cdot\nabla)w)_{n\geq 1}\|_{L^2(\ell^2)}
\| ( (\xi_n\cdot\nabla)\phi)_{n\geq 1}\|_{L^2(\ell^2)} \\
&\stackrel{\eqref{eq:parabolicity_stokes}}{\leq} \kappa_0 \|\nabla w\|_{L^2},
\end{align*}
where in $(i)$ we used the Cauchy-Scwartz inequality in $L^2(\ell^2)=L^2(\T^d;\ell^2(\N;\R^d))$, and in $(ii)$ that $\|\q\|_{\calL(L^2)}=1$ as $\q$ is an ortogonal projection on $L^2$.

Now, similarly to Step 2 of Theorem \ref{t:Meyers_parabolic}, one can prove an a-priori estimate for $\|w_{\lambda}\|_{L^p(0,T;H^{1,q})}$ with constant independent of $\lambda$ and $T\in (0,\infty]$ provided $2\leq q\leq p$ is sufficiently small due to \eqref{eq:qxi_lower_order_constant}, $\kappa_0<\kappa$ and substep 1a.

\smallskip

\emph{Step 2: Conclusion}.
The proof of Theorem \ref{t:Meyers_Stokes} follows by applying again the method of continuity. Hence, for $\lambda\in [0,1]$ and a stopping time $\tau$ taking values in $[0,\infty]$, we consider
\begin{equation}
\label{eq:stokes_turbulent_final}
\left\{
\begin{aligned}
\partial_t v_{\lambda}& =\kappa (\Delta  v_{\lambda} -v_{\lambda})+\qxi v_{\lambda}+ f  + \sum_{n\geq 1}\big(\lambda\, \p\big[(\xi_n\cdot\nabla) v_{\lambda} \big]+g_n\big)\,\dot{W}_t^n ,\\
v_{\lambda}(0,\cdot)&=0,
\end{aligned}
\right.
\end{equation}
on $\T^d$,
where $f\in L^p((0,\tau)\times \O;(\Hs^{1,q'})^*)$ and $g\in L^p((0,\tau)\times \O;L^{q}(\ell^2))$ are given progressively measurable processes. By the method of continuity \cite[Proposition 3.13]{AV19_QSEE_2},  it is enough to prove the existence of $p_0>2$ and $C_0>0$ such that, for all $p\in[2,p_0]$, $q\in [2,p]$, $\lambda\in [0,1]$, and any stopping time $\tau$ and any progressively measurable processes $f$ and $g=(g_n)_{n\geq 1}$ as above,
\begin{equation}
\label{eq:claimed_estimate_independent_stokes_lambda}
\|v_{\lambda}\|_{L^p((0,\tau)\times \O;H^{1,q})}
\leq C_p\big(
\|f\|_{L^p((0,\tau)\times \O;(\Hs^{1,q'})^*)}
+
\|g\|_{L^p((0,\tau)\times \O;L^{q}(\ell^2))}\big)
\end{equation}
where $v_{\lambda}\in L^p((0,\tau)\times \O;H^{1,q})$ is a \emph{given} strong solution to \eqref{eq:stokes_turbulent_final}. We emphasize that, as for \eqref{eq:claimed_estimate_independent_lambda}, the key in the above estimate is the independence of $C_p$ on $\lambda\in [0,1]$. Moreover, due to the method of continuity, the solution $v_\lambda$ is already given a priori.

Next, we turn to the proof of \eqref{eq:claimed_estimate_independent_stokes_lambda} 
Without loss of generality, we may assume $p_0\leq p_1$ where $p_1$ is as in Step 1 of the current proof. 
As in Step 2 of Theorem \ref{t:Meyers_parabolic}, due to Step 1, it is enough to prove \eqref{eq:claimed_estimate_independent_stokes_lambda} with $f\equiv 0$. 
We now repeat the argument of Theorem \ref{t:Meyers_parabolic} by analyzing first the constant in the energy inequality, and afterwards, we argue by perturbation. 

\smallskip

\emph{Substep 2a: (Analysis of constants -- case $\lambda= 0$ and $f\equiv 0$)
For each $p\in [2,p_1]$ there exists $K_p>0$ satisfying $\displaystyle{\limsup_{p\downarrow 2}K_p= 1}$, such that for all stopping time $\tau$ with values in $[0,\infty]$, any progressively measurable $g=(g_n)_{n\geq 1}\in L^p((0,\tau)\times \O;L^q(\ell^2))$ there exists a unique strong solution $v\in L^p((0,\tau)\times \O;\Hs^{1,q})$ to \eqref{eq:stokes_turbulent_final} with $\lambda= 0$ and $f\equiv 0$. Moreover, the latter satisfies}
\begin{align}
\label{eq:a_priori_estimate_1}
 \Big\|\Big[(\kappa-\kappa_0)(|v|^2 +|\nabla v|^2)+\|(\p[(\xi_n\cdot\nabla)v]\big)_{n\geq 1}\big\|^2_{\ell^2}\Big]^{1/2}\Big\|_{L^p((0,\tau)\times \O;L^{q})}&\\
\nonumber
\leq K_p\|g\|_{L^{p}((0,\tau)\times \O;L^{q}(\ell^2))}&.
\end{align}
In the following, $\tau$ is a fixed stopping time with values in $[0,\infty]$.
The existence of such $v$ follows from Step 1 and \cite[Theorem 3.9]{VP18} (or by approximation, the process $g$ with step processes and arguing as in \eqref{eq:remove_f_g_meyers_parabolic}). In the remaining part of this step, we show \eqref{eq:a_priori_estimate_1} with the claimed limiting behaviour of $K_p$.
The intuition behind the RHS\eqref{eq:a_priori_estimate_1} is that such a quantity in the case $p=q=2$ appears naturally in the energy balance with $K_2=1$.
To see this, first note that, by combining the It\^o formula \cite[Theorem 4.2.5]{LR15} and a standard integration by parts argument, one has
\begin{equation}
\label{eq:stokes_energy_balance}
\kappa \| v\|_{L^2(\R_+\times\O;H^1)} -\|(\Q[(\xi_n\cdot\nabla)v])_{n\geq 1}\|_{L^2(\R_+\times\O;L^2(\ell^2))}\leq \|g\|_{L^2(\R_+\times\O;L^2(\ell^2))}.
\end{equation}
Since $\q+\p=\mathrm{Id}_{L^2}$ and $\q,\p$ are orthogonal projections on $L^2$, 
\begin{align*}
\|(\p[(\xi_n\cdot\nabla) v])_{n\geq 1}\|_{L^2(\ell^2)}^2
&=\|((\xi_n\cdot\nabla) v)_{n\geq 1}\|_{L^2(\ell^2)}^2
-\|(\q[(\xi_n\cdot\nabla) v])_{n\geq 1}\|_{L^2(\ell^2)}^2\\
&\stackrel{\eqref{eq:parabolicity_stokes}}{\leq} 
\kappa_0\|\nabla v\|_{L^2}^2- \|(\q[(\xi_n\cdot\nabla) v])_{n\geq 1}\|_{L^2(\ell^2)}^2.
\end{align*}
Combing the above with \eqref{eq:stokes_energy_balance}, one obtains \eqref{eq:a_priori_estimate_1} with $q=p=2$ and $K_2=1$.

We conclude by arguing by an interpolation argument similar to \eqref{eq:asymptotic_behaviour_Rstarp_step1_parabolic0}-\eqref{eq:N22_constant_parabolic}. To this end, consider the operator
\begin{align*}
\Stok_{q,p}:L^p_{\Progress}(\R_+\times \O;\Ls^q(\T^d;\ell^2))
&\to L^p(\R_+\times \O;L^q(\T^d;\R^{d}\times \R^{d\times d}\times\ell^2)))\\
g&\mapsto (\sqrt{\kappa-\kappa_0}v,\sqrt{\kappa-\kappa_0}\nabla v, (\p[(\xi_n\cdot\nabla v)])_{n\geq 1}),
\end{align*}
where $v$ is the strong solution to \eqref{eq:stokes_turbulent_final} with $\lambda= 0$ and $f\equiv 0$, which exists due to the comment below \eqref{eq:a_priori_estimate_1}. We emphasize that the norm on product space $\R^{d}\times \R^{d\times d}\times\ell^2$ is the Euclidean one: $\|(\cdot,\cdot,\cdot)\|_{\R^{d}\times \R^{d\times d}\times\ell^2}^2
=\|\cdot\|_{\R^d}^2 +
\|\cdot\|_{\R^{d\times d}}^2+\|\cdot\|_{\ell^2}^2$. 
The latter choice comes from the fact that $\|\Stok_{2,2}\|_{\calL}\leq 1$ as we have proved that \eqref{eq:a_priori_estimate_1} holds with $K_2=1$. 
The well-definiteness of $\Stok_{q,p}$ follows from the above-noticed existence and uniqueness of strong solutions $v\in L^p(\R_+\times \O;H^{1,q})$ of \eqref{eq:stokes_turbulent2} with data $g\in L^p_{\Progress}(\R_+\times \O;\Ls^q(\T^d;\ell^2))$ due to Step 1.  
Thus, the claim of Substep 2a follows by complex interpolation \cite[Theorem 2.2.6]{Analysis1}.
 
\smallskip

\emph{Substep 2b: Proof of Theorem \ref{t:Meyers_Stokes}}. 
In this substep, we prove the a priori estimate \eqref{eq:claimed_estimate_independent_stokes_lambda} for strong solutions to \eqref{eq:stokes_turbulent_final}. Now, let $v_\lambda\in L^p((0,\tau)\times \O;\Hs^{1,q})$ be a strong solution to \eqref{eq:stokes_turbulent_final} where $\tau$ is a given stopping time.  
From the estimate of Substep 2a and the elementary inequalities $(a^r+b^r)\leq (a+b)^r\leq 2^{r-1}(a^r+b^r)$ that is valid for all $r\geq 1$ and $a,b\geq 0$, we obtain the existence of a constant $N_p$ such that $\lim_{p\downarrow 2}N_p=1$ such that  
\begin{align*}
& \|v_\lambda\|_{L^p((0,\tau)\times \O;H^{1,q})}^2
+\|(\p[(\xi_n\cdot\nabla)v_\lambda])\|_{L^p((0,\tau)\times \O;L^q(\ell^2))}^2\\
&\leq N_p\Big\|\Big[(\kappa-\kappa_0)(|v_\lambda|^2 +|\nabla v_\lambda|^2)+\|(\p[(\xi_n\cdot\nabla)v_\lambda]\big)_{n\geq 1}\big\|^2_{\ell^2}\Big]^{1/2}\Big\|_{L^p((0,\tau)\times \O;L^{q})}^2\\
&\leq N_p K_p \| (g_n + \lambda\p [(\xi_n\cdot\nabla)v_\lambda])_{n\geq 1} \|_{L^{p}((0,\tau)\times \O;L^{q}(\ell^2))}^2\\
&\leq N_p K_p(1+\delta^{-1})\| g\|_{L^{p}((0,\tau)\times \O;L^{q}(\ell^2))}^2 \\
&\quad+ N_p K_p (1+\delta)
\| \p [(\xi_n\cdot\nabla)v_\lambda])_{n\geq 1} \|_{L^{p}((0,\tau)\times \O;L^{q}(\ell^2))}^2,
\end{align*}
where $\delta>0$ is a positive constant, which will be fixed later. To conclude, it remains to show that the last term can be absorbed on the LHS of the corresponding estimate. To this end, from \eqref{eq:boundedness_stokes}, it follows that, for all $p\in [2,p_1]$ and $q\in [2,p]$ (here $p_1$ is as in Substep 2a), 
\begin{align*}
&N_p K_p (1+\delta)
\| \p [(\xi_n\cdot\nabla)v_\lambda])_{n\geq 1} \|_{L^{p}((0,\tau)\times \O;L^{q}(\ell^2))}^2\\
&\leq 
\| \p [(\xi_n\cdot\nabla)v_\lambda])_{n\geq 1} \|_{L^{p}((0,\tau)\times \O;L^{q}(\ell^2))}^2
+ (1-N_pK_p (1+\delta)) C_p M_0 \|v_\lambda\|_{L^p((0,\tau)\times \O;H^{1,q})},
\end{align*}
where $C_p=\sup_{2\leq q\leq p}\|\p\|_{\calL(L^q)}$. Note that, by interpolation and the fact that $\|\p\|_{\calL(L^2)}=1$ (as $\p$ is an orthogonal projection on $L^2$), it follows that $C_p\to 1$ as $p\downarrow 2$. 
Hence, the claimed a priori estimate \eqref{eq:claimed_estimate_independent_stokes_lambda} for $v_\lambda$ follows by collecting the previous inequalities, and choosing $p\in [2,p_1]$ and $\delta>0$ such that $(1-N_pK_p (1+\delta)) C_p M_0<\kappa-\kappa_0$. Note that the latter choice is possible as $\lim_{p\downarrow 2}N_p=\limsup_{p\downarrow 2}K_p=1$, where the latter follows from Substep 2a.
\end{proof}

\subsubsection*{Acknowledgements}
The author thanks Sebastian Bechtel, Nicolas Clozeau, and Max Sauerbrey for their valuable suggestions and discussions. The author also acknowledges Franco Flandoli for his insights into the heuristic derivation presented in Subsection \ref{ss:heuristics_viscosity_dependent}. Additionally, the author thanks Umberto Pappalettera for fruitful discussions and comments, which significantly improved the presentation of the results. Finally, the author is grateful to the anonymous referee for a careful reading of the manuscript and for providing several insightful comments.

\medskip

\noindent
{\bf Data availability.} This manuscript has no associated data.
\smallskip

\noindent
{\bf Declaration – Conflict of interest.} The author has no conflict of interest. 

\def\polhk#1{\setbox0=\hbox{#1}{\ooalign{\hidewidth
  \lower1.5ex\hbox{`}\hidewidth\crcr\unhbox0}}} \def\cprime{$'$}

\end{document}